\documentclass[12pt]{article}
\usepackage[cm]{fullpage}
\usepackage{amssymb,amsmath,amsthm,amscd,graphicx}
\usepackage{subcaption}
\usepackage[labelsep=quad,indention=20pt]{caption}

\theoremstyle{definition}
\newtheorem{example}{Example}[section]
\DeclareMathOperator{\Tr}{Tr}
\def\sp{\mathop{\rm sp}}
\def\dist{\mathop{\rm dist}}
\newtheorem{algorithm}{Algorithm}

\title{Dynamic isoperimetry and the \\ geometry of Lagrangian coherent structures}

\author{Gary Froyland \\ \\ School of Mathematics and Statistics \\ University of New South Wales \\ Sydney NSW 2052, Australia}
\begin{document}
\maketitle
\begin{abstract}
The study of transport and mixing processes in dynamical systems is particularly important for the analysis of mathematical models of physical systems.
We propose a novel, direct geometric method to identify subsets of phase space that remain strongly coherent over a finite time duration.
This new method is based on a dynamic extension of classical (static) isoperimetric problems;  the latter are concerned with identifying submanifolds with the smallest boundary size relative to their volume.

The present work introduces \emph{dynamic} isoperimetric problems;  the study of sets with small boundary size relative to volume \emph{as they are evolved by a general dynamical system}.
We formulate and prove dynamic versions of the fundamental (static) isoperimetric (in)equalities;  a dynamic Federer-Fleming theorem and a dynamic Cheeger inequality.
We introduce a new dynamic Laplace operator and describe a computational method to identify coherent sets based on eigenfunctions of the dynamic Laplacian.

Our results include formal mathematical statements concerning geometric properties of finite-time coherent sets, whose boundaries can be regarded as Lagrangian coherent structures.
The computational advantages of our new approach are a well-separated spectrum for the dynamic Laplacian, and flexibility in appropriate numerical approximation methods.
Finally, we demonstrate that the dynamic Laplace operator can be realised as a zero-diffusion limit of a newly advanced probabilistic transfer operator method  \cite{F13} for finding coherent sets, which is based on small diffusion.
Thus, the present approach sits naturally alongside the probabilistic approach \cite{F13}, and adds a formal geometric interpretation.
%
%
%

%
\end{abstract}

\def\D{\mathcal{D}_{\epsilon}}
\def\Dx{\mathcal{D}_{X,\epsilon}}
\def\Dy{\mathcal{D}_{Y'_\epsilon,\epsilon}}
\def\Dxd{\mathcal{D}_{\dot{X},\epsilon}}
\def\Dyd{\mathcal{D}_{\dot{Y}'_\epsilon,\epsilon}}
\def\P{\mathcal{P}}
\def\Pd{\dot{\mathcal{P}}}
\def\Pe{\mathcal{P}_\epsilon}
\def\Le{\mathcal{L}_\epsilon}
\def\Leb{{\rm Leb}}
\def\Led{\dot{\mathcal{L}}_\epsilon}
\def\ax{\alpha_{X,\epsilon}}
\def\ay{\alpha_{Y,\epsilon}}
\def\span{{\rm span}}
\def\Pr{{\rm Pr}}

\newtheorem{theorem}{Theorem}[section]
\newtheorem{lemma}[theorem]{Lemma}
\newtheorem{corollary}[theorem]{Corollary}
\newtheorem{sublemma}[theorem]{Sublemma}
\newtheorem{remark}[theorem]{Remark}
\newtheorem{definition}{Definition}

\section{Introduction}

The study of Lagrangian coherent structures in nonlinear dynamics is broadly concerned with the identification of spatial structures in phase space that behave in a relatively ``stable'' way under the dynamics by resisting high levels of distortion and/or diffusion.
In the case of purely advective dynamics governed by a nonlinear map or time-dependent ordinary differential equations, if the structure is a full-dimensional set\footnote{Frequently, \emph{coherent structures} are co-dimension 1 objects, while full-dimensional objects are called \emph{coherent sets}.}, this set resists filamentation under the nonlinear dynamics and the ratio of boundary size to the volume of the set remains relatively unchanged.
When the dynamics is a combination of advection and diffusion, for example, a time-dependent Fokker-Planck equation, a coherent set resists mixing with the surrounding phase space, again through the mechanism of retaining a relatively low boundary size.
There is a long history of development of related ideas spread across the dynamical systems, fluid dynamics, and geophysics literature.
We mention just two early related works: \cite{pierrehumbert_yang}, which contains several ideas concerning mixing and transport mitigation in fluids, and the book \cite{ottino}, which discusses purely advective (chaotic) mixing.
These ideas, and the theory and algorithms developed subsequently, have grown into their own field, and have been employed across a wide spectrum of physical, biological, environmental, and engineering applications.

The study of sets with minimal boundary is known in differential geometry as an isoperimetric problem.
The classic isoperimetric problem in $\mathbb{R}^2$ is to determine the set $S$ with least boundary length (perimeter), given a fixed area;  or equivalently to find a set $S$ of fixed (iso-) perimeter with greatest area.
The unique solution of this problem is a disk; all other sets $S$ satisfy the inequality $\mbox{length}(\partial S)/\sqrt{\mbox{area}(S)}>2\sqrt{\pi}$, an example of an \emph{isoperimetric inequality}.
The obvious generalisation of this problem in $\mathbb{R}^d$ is true: $d$-balls minimise surface area and
$\ell_{d-1}(\partial S)/\ell_d(S)^{1-1/d}>d{\omega_d}^{1/d}$, where ${\omega_d}$ is the volume of a unit ball in $\mathbb{R}^d$, and $\ell_d, \ell_{d-1}$ are $d$ and $d-1$-dimensional volume (see e.g.\ \cite{chavelisoperimetric}).
Because we have in mind applications to fluid flow, we focus on compact domains $M$ rather than  $\mathbb{R}^d$.
For compact, connected $M$, one has a hypersurface $\Gamma\subset M$ disconnect $M$ into two pieces $M_1,M_2$, just as the $d$-ball disconnects $\mathbb{R}^d$.
One tries to find a disconnecting hypersurface $\Gamma$ that minimises the ratio $$\mathbf{h}(\Gamma):=\ell_{d-1}(\Gamma)/\min\{\ell_d(M_1),\ell_d(M_2)\}.$$


One of our main contributions is to develop a theory of \emph{dynamic} isoperimetry, where one studies the \emph{evolution} of hypersurfaces $\Gamma\subset M$ that disconnect phase space $M$, under a nonlinear transformation $T:M\to T(M)$.
Both the manifold $M$ and the separating surface $\Gamma$ are subjected to general nonlinear dynamics, representing the action of some (possibly chaotic) flow over some finite-time duration.
The solution to this dynamic isoperimetric problem may have nothing to do with the solution to the static problem because even if $\Gamma$ has low co-dimension 1 volume, the size of $T(\Gamma)$ may be greater, and if $T$ is chaotic, $T^k(\Gamma)$ for a modest number of iterations $k$ may have significantly greater size (see Figure \ref{coherencefig}b and \ref{coherencefig}c).

\begin{figure}[hbt]
\centering
  \includegraphics[width=12cm]{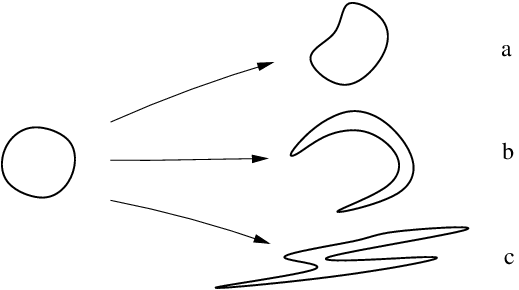}\\
  \caption{The two-dimensional set on the left with boundary $\Gamma$ has a low boundary size to volume ratio.  The three sets $T(\Gamma)$ on the right are three possible images of the shape on the left under three different nonlinear volume-preserving dynamical systems $T$ over a fixed finite time duration.  Under dynamics `a', the set on the left retains a low boundary size to volume ratio, but under dynamics `b' and `c', the boundary size is significantly increased.}\label{coherencefig}
\end{figure}

Thus, the dynamics plays a key role in the selection of a surface $\Gamma$ that remains small relative to the domain volume when evolved under a general volume-preserving nonlinear dynamical system.
Clearly such surfaces $\Gamma$ bound sets that are very natural candidates for {coherent sets}.
To take a real-world example, coherent sets such as oceanic eddies retain water mass
and remain coherent by their boundary remaining as short as possible over an extended period of time.
This reduces diffusion through the eddy boundary $T^k\Gamma$ at times $k=0,1,2,\ldots$ via small-scale diffusion processes (e.g. \cite{balasuriyajones99}).


In discrete time, under a single application of $T$, this motivates the minimisation of the quantity
\begin{equation}
\label{cheegereqn_intro}
\mathbf{h}^D(\Gamma):=\frac{\ell_{d-1}(\Gamma)+\ell_{d-1}(T(\Gamma))}{2\min\{\ell(M_1),\ell(M_2)\}},
\end{equation}
where $\Gamma$ varies over smooth hypersurfaces disconnecting $M$ into two connected pieces $M_1, M_2$.
In continuous time, we consider smooth flow maps $T^{(t)}:M\to T^{(t)}(M)$, $t\in[0,\tau]$ and the quantity
\begin{equation}
\label{hdynt_intro}
\mathbf{h}_{[0,\tau]}^{D}(\Gamma):=\frac{\int_0^\tau \ell_{d-1}(T^{(t)}\Gamma)\ dt}{\tau\min\{\ell(M_1),\ell(M_2)\}}.
\end{equation}
In addition to the geometric interpretation of Figure \ref{coherencefig}, the expression (\ref{hdynt_intro}) is also directly proportional to the mass lost through the boundary over the finite time interval $[0,\tau]$ via continually-present small-scale diffusion.
This latter interpretation motivates the additive combination of terms in (\ref{cheegereqn_intro}) and (\ref{hdynt_intro}) (as opposed to e.g.\ a multiplicative combination).

We focus on the difficult setting of general time-dependent dynamics.
In the autonomous dynamics setting, classical ``coherent'' (in fact, invariant) objects such as invariant tori or invariant cylinders are completely invariant under the dynamics and thus their boundaries remain fixed and unchanging.
Furthermore, trajectories that begin on the inside of these structures can never leave through their co-dimension 1 boundary.
Thus, these objects can be regarded as ``ideal'' coherent structures  and are relatively well-understood.
In the general time-dependent dynamics setting, the existence of such perfectly invariant structures is highly unlikely.
\begin{definition}
\label{def:ftcs}
For the finite-time dynamics considered we define a \emph{maximally coherent structure} on $M$ to be a minimizing $\Gamma$ for (\ref{cheegereqn_intro}) or (\ref{hdynt_intro}) (see also Section \ref{sect:multistep}) when the infimum is achieved;  otherwise we select a $\Gamma$ for which $\mathbf{h}^D(\Gamma)$ is arbitrarily close to the infimum.
The corresponding \emph{maximally coherent set} is defined to be the $M_k$, $k=1,2$ with minimal volume arising from the disconnection $\Gamma$.
\end{definition}
Existing approaches to identifying coherent structures fall broadly into two categories: probabilistic methods and geometric methods.
Probabilistic approaches to finding coherent structures are based around the {transfer operator} (or Perron-Frobenius operator) and can be applied to systems with a combination of advection and diffusion, or purely advective dynamics.
These methods look for \emph{finite-time coherent sets} \cite{FSM10,F13,FPG14}: sets that resist mixing with the rest of phase space and represent global transport barriers to complete mixing.
These constructions have found application in atmospheric dynamics to map and track the Antarctic polar vortex \cite{FSM10}, and in ocean dynamics to track an oceanic eddy in the Agulhas current \cite{FHRSS12,FHRvS15}, in both two and three dimensions.
In the purely advective setting, the constructions underpinning the transfer operator methods rely on small amounts of diffusion \cite{F13}.
This small diffusion makes complete phase space mixing possible and is required for other technical reasons;  these points are discussed in \cite{F13} and in the present work in Section \ref{sect:zerolimit}.
Further, the boundary sizes of finite-time coherent sets are implicitly measured in \cite{F13} because the localised diffusion can only eject mass near the boundaries of coherent sets;  thus the mixing experienced over a finite time is tied to the boundary sizes of the coherent sets.
We therefore expect our proposed dynamic isoperimetry methodology to be compatible with \cite{F13} and in fact, we show that the former arises as a zero-diffusion limit of the latter, which makes explicit the geometry contained in the probabilistic methods for small diffusion.

In recent years there have been several geometric methods proposed to characterise either trajectories or co-dimension 1 surfaces that represent coherent structures in purely advective dynamics \cite{mancho_M,mezicmeso,Haller_11,Haller_12,allshouse_thiffeault,ma_bollt_shape,romkedar_M}.
In two dimensions, \cite{Haller_11} defines a hyperbolic LCS as a material curve that has repelling dynamics normal to the curve in forward time and greater repulsion than nearby curves, while \cite{Haller_12} defines transport barriers as curves that are local minimisers of length functionals integrated over a finite time interval, with various hyperbolic, shear, and elliptic boundary conditions for the associated Euler-Langrange equations.
The approach \cite{ma_bollt_shape} suggests that curves formed from points that experience local rigid-body motion over a finite-time duration are associated with coherent dynamics.
Most approaches compute various scalar fields from Lagrangian trajectories and infer corresponding dynamic properties from the fields.


The main contributions of this paper include formulations of dynamic versions of classical objects in isoperimetric theory and formulations and proofs of dynamic versions of fundamental isoperimetric theorems.
We formulate a dynamic version of the \emph{Cheeger constant} $\mathbf{h}$ (the minimal ratio of the $d-1$-dimensional volume of a disconnecting hypersurface $\Gamma$ to the disconnected volumes of $M_1, M_2$) and the Sobolev constant (a functional representation of the Cheeger constant, where the separating hypersurface $\Gamma$ is the level set of a smooth function).
The celebrated \emph{Federer-Fleming theorem} equates these two constants, formally linking geometric and functional representations of these notions of isoperimetry.
We formulate and prove a dynamic version of the Federer-Fleming theorem, linking our new dynamic Cheeger and Sobolev constants (Section \ref{sect:ff}).

We further formulate and prove a dynamic version of the \emph{Cheeger inequality}, which relates the second largest eigenvalue of the Laplace operator $\triangle$ on $M$ to the Cheeger constant (Section \ref{sect:cheegerineq}).
This requires a replacement of the Laplace operator with a new operator that incorporates the general nonlinear dynamics.
In the discrete-time volume-preserving setting, the operator on $M$ corresponding to the expression (\ref{cheegereqn_intro}) is \begin{equation}
\label{dynlap_intro}
(\triangle+\mathcal{P}^*\triangle\mathcal{P})/2,
\end{equation}
where $\mathcal{P}f=f\circ T^{-1}$ is the transfer operator for $T$.
We develop a spectral theory for this new operator in Section \ref{sect:dynlapspec}, and propose an algorithm that uses eigenvectors of this operator to identify coherent sets in practice.
In Section \ref{sect:zerolimit} we demonstrate that one can recover the new dynamic Laplace operator from the probabilistic methodology of \cite{F13} as a zero-diffusion limit of the latter.
The probabilistic approach in \cite{F13} computes singular vectors of an $\epsilon$-perturbed operator $\mathcal{L}_\epsilon$;  that is, eigenvectors of $\mathcal{L}_\epsilon^*\mathcal{L}_\epsilon$.
For $C^3$ $f:M\to\mathbb{R}$ we show that
\begin{equation*}
\label{diffform1thm_intro}
\lim_{\epsilon\to 0} \frac{(\mathcal{L}_\epsilon^*\mathcal{L}_\epsilon-I)f(x)}{\epsilon^2}=c\cdot(\triangle+\P^*\triangle\P)f(x),
\end{equation*}
for each $x\in \mathring{M}$, where $c$ is an explicit constant.
Thus, we provide a missing formal link between the probabilistic coherent set methodologies and direct notions of geometry via boundary size and volume.
Finally, in Section \ref{sect:numerics}, we illustrate how eigenfunctions of the dynamic Laplace operator (\ref{dynlap_intro}) can be used to find coherent sets using three numerical case studies.
The appendix contains most of the proofs.

\section{Background}
\label{staticsection}
Let $M$ be a compact, connected $d$-dimensional $C^\infty$ Riemannian manifold of vanishing curvature, which is either boundaryless or has $C^\infty$ boundary. 
This setting is relatively simple from a differential geometric point of view, but the introduction of nonlinear dynamics creates nontrivial questions in this setting.
Let $\ell_d$ denote Lebesgue (volume) measure on $M$.
To measure co-dimension 1 objects, we use $d-1$-dimensional Hausdorff measure $\mathcal{H}^{d-1}$ (using the trivial Riemannian metric to calculate diameter) to define $\ell_{d-1}=(\omega_{d-1}/2^{d-1})\mathcal{H}^{d-1}$;  see e.g.\ Corollary IV.1.1 \cite{chavelisoperimetric}.
We define the \emph{Cheeger constant}
\begin{equation}
\label{cheegerconst}\mathbf{h}:=\inf_\Gamma\frac{\ell_{d-1}(\Gamma)}{\min\{\ell_d(M_1),\ell_d(M_2)\}},
\end{equation}
where $\Gamma$ varies over compact $(d-1)$-dimensional $C^\infty$ submanifolds that separate $M$ into two connected components $M_1, M_2$.

One can link these geometric ideas with functions on $M$ by considering level sets of a function $f:M\to\mathbb{R}$ defining the $(d-1)$-dimensional separating surface $\Gamma$.
In fact, defining the \emph{Sobolev constant}
\begin{equation}
\label{sobolevconst}
\mathbf{s}:=\inf_{f\in C^\infty}\frac{\|\nabla f\|_1}{\inf_{\alpha\in\mathbb{R}}\|f-\alpha\|_1},
 \end{equation}
 one has the celebrated Federer-Fleming result
\begin{theorem}[\cite{federerfleming,mazya}]
\label{ffthm}
\begin{equation}
\label{staticffeqn}
\mathbf{s}=\mathbf{h}.
\end{equation}
\end{theorem}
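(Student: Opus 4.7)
The plan is to establish both inequalities $\mathbf{h} \leq \mathbf{s}$ and $\mathbf{s} \leq \mathbf{h}$ separately, with the smooth coarea formula
\[
\|\nabla f\|_1 = \int_{-\infty}^{\infty} \ell_{d-1}(\{f = t\})\,dt
\]
as the central tool; the paper's normalisation $\ell_{d-1} = (\omega_{d-1}/2^{d-1})\mathcal{H}^{d-1}$ is precisely the one that makes this identity hold for $C^\infty$ $f$ on $M$.

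For $\mathbf{h} \leq \mathbf{s}$, I fix $f \in C^\infty(M)$ and pick a median $\alpha^*$, characterised by $\ell_d(\{f > \alpha^*\}),\ell_d(\{f < \alpha^*\}) \leq \ell_d(M)/2$, so that $\inf_\alpha \|f - \alpha\|_1 = \|f - \alpha^*\|_1$. The layer-cake formula gives
\[
\|f - \alpha^*\|_1 = \int_{\alpha^*}^{\infty}\ell_d(\{f > t\})\,dt + \int_{-\infty}^{\alpha^*}\ell_d(\{f < t\})\,dt.
\]
By Sard's theorem, $\{f = t\}$ is a smooth hypersurface for a.e.\ $t$; for $t > \alpha^*$ the super-level set $\{f > t\}$ has volume at most $\ell_d(M)/2$, so the definition of $\mathbf{h}$ yields $\mathbf{h}\cdot \ell_d(\{f > t\}) \leq \ell_{d-1}(\{f = t\})$, with a symmetric bound for $t < \alpha^*$. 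Integrating in $t$ and invoking the coarea formula gives $\mathbf{h}\,\|f - \alpha^*\|_1 \leq \|\nabla f\|_1$, so $\mathbf{h} \leq \mathbf{s}$ after taking the infimum over $f$.

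For $\mathbf{s} \leq \mathbf{h}$, I take a near-minimising separating hypersurface $\Gamma$ with $\ell_d(M_1) \leq \ell_d(M_2)$ and build smooth approximations $f_n$ to $\chi_{M_1}$ by mollifying the indicator inside a shrinking tubular neighbourhood of $\Gamma$, so that $f_n \equiv 1$ on $M_1$ off this neighbourhood, $f_n \equiv 0$ on $M_2$ off it, and $f_n$ interpolates smoothly across $\Gamma$. Since $\ell_d(M_1) \leq \ell_d(M)/2$, the median of $f_n$ is $0$ for $n$ large, so $\inf_\alpha \|f_n - \alpha\|_1 \to \ell_d(M_1)$; meanwhile, the coarea formula together with the collapse of the level sets of $f_n$ onto $\Gamma$ gives $\|\nabla f_n\|_1 \to \ell_{d-1}(\Gamma)$. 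Hence $\mathbf{s} \leq \ell_{d-1}(\Gamma)/\ell_d(M_1)$, and passing to the infimum over $\Gamma$ gives $\mathbf{s} \leq \mathbf{h}$.

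The main obstacle is the approximation step: one must show that $\|\nabla f_n\|_1$ converges cleanly to $\ell_{d-1}(\Gamma)$ without spurious mass. I would handle this by parametrising a tubular neighbourhood of $\Gamma$ via the normal exponential map (available since $\Gamma$ is a compact $C^\infty$ hypersurface in the smooth manifold $M$), writing $f_n$ as a function of signed normal distance to $\Gamma$, and computing $\|\nabla f_n\|_1$ by Fubini along the normal fibres, exploiting the fact that the Jacobian of the exponential map tends to $1$ as the tube shrinks. A secondary subtlety in the first direction is that $\{f > t\}$ or its boundary $\{f = t\}$ need not be connected; this is dispatched by applying the $\mathbf{h}$-inequality to a separating connected component that encloses the smaller volume piece, which only tightens the required bound.
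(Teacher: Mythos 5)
Your argument is correct and follows the standard Federer--Fleming proof; the paper does not prove this classical static statement itself (it cites \cite{federerfleming,mazya}), but your two-step scheme — coarea formula plus a median constant for $\mathbf{h}\le\mathbf{s}$, and smooth approximation of a characteristic function for $\mathbf{s}\le\mathbf{h}$ — is exactly the scheme the paper employs in its own proof of the dynamic version (Theorem \ref{fflemma}), which reduces to the static case on taking $T=\mathrm{id}$. The only cosmetic difference is that the paper mollifies a $\pm1$-valued piecewise-linear profile of width $\epsilon$ (so $|\nabla f_\epsilon|=1/\epsilon$ on the tube and the numerator and denominator each pick up a factor $2$), whereas you mollify the $0/1$ indicator via a tubular neighbourhood; these are equivalent since the Sobolev quotient is scale-invariant.
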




A link to spectral theory of operators is provided by the Cheeger inequality.
Consider the eigenproblem
\begin{equation}
\label{closedeigenproblem}
\triangle\phi=\lambda\phi,\mbox{ on $\mathring{M}$}.
\end{equation}
If $\partial M\neq \emptyset$, then zero Neumann boundary conditions are imposed:
 \begin{equation}
\label{staticneumann}
\nabla \phi(y)\cdot \mathbf{n}(y)=0\mbox{ for $y\in\partial M$},
 \end{equation}
where $\mathbf{n}(y)$ is the outward unit normal to $\partial M$ at $y$.

It is well-known (see e.g.\ Theorem 1.1 \cite{chaveleigenvalues} and Section 4.4 \cite{mcowen}) that the set of eigenvalues consists of a sequence $0= \lambda_1>\lambda_2>\cdots\downarrow -\infty$.
and each associated eigenspace is finite-dimensional.
Eigenspaces belonging to distinct eigenvalues are orthogonal in $L^2(M)$ and $L^2(M)$ is the direct sum of all of the eigenspaces.
Furthermore, each eigenfunction is $C^\infty$ on $M$.

\begin{theorem}[Cheeger Inequality, \cite{cheeger}]
\label{cheegerthm}\
\begin{enumerate}
\item If $M$ is boundaryless, let $\lambda_2$ be the smallest magnitude nonzero eigenvalue of (\ref{closedeigenproblem}).
\item If $\partial M\neq\emptyset$, let $\lambda_2$ be the smallest magnitude nonzero eigenvalue for (\ref{closedeigenproblem})--(\ref{staticneumann}).
\end{enumerate}
Then
\begin{equation}
\label{staticcheegerineq}
\mathbf{h}\le 2\sqrt{-\lambda_2},
  \end{equation}
\end{theorem}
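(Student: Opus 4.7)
The plan is to combine the Federer-Fleming theorem (Theorem \ref{ffthm}) with the variational (Rayleigh quotient) characterization of $\lambda_2$, applied not to the eigenfunction $\phi_2$ itself but to a carefully chosen nonnegative truncation. The key reason a truncation is needed is that $\phi_2$ is orthogonal to constants and therefore changes sign, so $\phi_2$ itself does not have small support, whereas the Sobolev quotient in (\ref{sobolevconst}) exploits asymmetry of a test function.

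First I would let $\phi$ denote an eigenfunction with eigenvalue $\lambda_2$. Since $\phi$ is $L^2$-orthogonal to constants, both the sets $M_+:=\{\phi>0\}$ and $M_-:=\{\phi<0\}$ have positive measure; by swapping $\phi\mapsto -\phi$ if necessary I may assume $\ell_d(M_+)\le \ell_d(M)/2$. Next I would set $u:=\phi_+:=\max(\phi,0)$ and establish the Rayleigh-type identity
\begin{equation*}
-\lambda_2 \;=\; \frac{\int_M |\nabla u|^2\, d\ell_d}{\int_M u^2\, d\ell_d}.
\end{equation*}
This follows by integrating $\phi\triangle\phi=\lambda_2\phi^2$ over $M_+$ and applying Green's identity: the boundary $\partial M_+$ splits into the part inside $\mathring{M}$, where $\phi=0$, and the part on $\partial M$, where the Neumann condition (\ref{staticneumann}) forces $\nabla\phi\cdot\mathbf{n}=0$ (if $\partial M=\emptyset$ only the first piece appears). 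Thus both boundary contributions vanish. The passage between $\phi$ on $M_+$ and $u$ on $M$ is legitimate because $u$ lies in $W^{1,2}(M)$ with $\nabla u=\nabla\phi\cdot\mathbf{1}_{M_+}$ almost everywhere.

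Now I would apply Federer-Fleming (Theorem \ref{ffthm}) to the nonnegative test function $v:=u^2$, which is $C^1$ off the measure-zero set $\{\phi=0\}$ and can be approximated in $W^{1,1}$ by $C^\infty$ functions, so that
\begin{equation*}
\mathbf{h}\;=\;\mathbf{s}\;\le\;\frac{\|\nabla v\|_1}{\inf_{\alpha\in\mathbb{R}}\|v-\alpha\|_1}.
\end{equation*}
The denominator is bounded below by $\|v\|_1=\int_M u^2$: indeed, since $v\equiv 0$ on the set $M\setminus\overline{M_+}$, whose measure is at least $\ell_d(M)/2$, the choice $\alpha=0$ is optimal (any positive $\alpha$ increases the integral on $M\setminus\overline{M_+}$ by at least as much as it can decrease it on $M_+$). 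For the numerator, $\nabla v=2u\,\nabla u$ and the Cauchy-Schwarz inequality gives
\begin{equation*}
\|\nabla v\|_1 \;=\; 2\int_M u\,|\nabla u|\, d\ell_d \;\le\; 2\Bigl(\int_M u^2\Bigr)^{1/2}\Bigl(\int_M|\nabla u|^2\Bigr)^{1/2}.
\end{equation*}

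Combining the two estimates with the Rayleigh identity yields
\begin{equation*}
\mathbf{h}\;\le\;\frac{\|\nabla v\|_1}{\|v\|_1}\;\le\;2\,\Bigl(\frac{\int_M|\nabla u|^2}{\int_M u^2}\Bigr)^{1/2}\;=\;2\sqrt{-\lambda_2},
\end{equation*}
which is (\ref{staticcheegerineq}). The main obstacle I expect is the technical verification that $u=\phi_+$ is an admissible test function at the level of the Federer-Fleming inequality and the Green's identity step, since $\phi_+$ is only Lipschitz rather than smooth; this is handled by a mollification/approximation argument exploiting that $\{\phi=0\}$ is a smooth hypersurface away from a null set, thanks to $\phi\in C^\infty(M)$.
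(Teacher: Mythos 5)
Your proof is correct. The paper cites this classical result from \cite{cheeger} and does not re-prove it, but it does prove the dynamic analogue, Theorem \ref{cheegerdynthm}, whose specialization to $T$ the identity map recovers (\ref{staticcheegerineq}), so that is the natural comparison. The paper's route (after \cite{ledoux} and Chavel) takes an \emph{arbitrary} smooth $f$ with median $m$, splits $f-m=f^+-f^-$, applies the co-area formula and the definition of $\mathbf{h}$ to the two pieces $(f^+)^2$, $(f^-)^2$, and closes with Cauchy--Schwarz, the estimate $\|f-\bar f\|_2\le\|f-m\|_2$, and the variational characterization of $\lambda_2$. You instead work directly with the eigenfunction $\phi$: you restrict to the nodal domain $M_+$ of smaller measure, obtain the Rayleigh identity $-\lambda_2=\|\nabla\phi_+\|_2^2/\|\phi_+\|_2^2$ by Green's identity (the boundary contributions vanish since $\phi=0$ on $\partial M_+\cap\mathring M$ and $\nabla\phi\cdot\mathbf n=0$ on $\partial M_+\cap\partial M$), and then feed $v=\phi_+^2$ into the Federer--Fleming bound $\mathbf h=\mathbf s\le\|\nabla v\|_1/\inf_\alpha\|v-\alpha\|_1$. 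This is essentially Cheeger's original nodal-domain argument and has the virtue of making the logical dependence on Theorem \ref{ffthm} explicit; the price is that it exploits structure specific to the eigenfunction (a single nodal domain with clean vanishing of boundary integrals). The median-split argument treats a general test function and so transplants directly to the composite operator $\hat\triangle$ and its boundary condition (\ref{strongbc0}), where the $DT$-coupled normal and the simultaneous nodal geometry of $u_2$ on $M$ and of $\P u_2$ on $T(M)$ would make your restriction-and-Green's-identity step more delicate. The technical points you flag -- admissibility of $\phi_+\in W^{1,2}$, optimality of $\alpha=0$ as a median of $v$, $W^{1,1}$-density of $C^\infty$ in the Sobolev quotient -- are all handled correctly.
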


Level sets of the eigenfunction corresponding to $\lambda_2$ give vital information about the $\Gamma$ that achieves the Cheeger constant;  a fact that we will exploit in our new dynamic setup.
We remark that there is a vast literature on the use of the Laplace operator for extracting various types of geometric information on static manifolds and we refer the reader to the recent survey \cite{grebenkov_nguyen}, with over 500 references.
We now proceed through a few simple domains to illustrate the relationship between the solution to the isoperimetric problems and the eigenvalues and eigenfunctions of the Laplacian.

\subsection{The flat 2-torus:}
\label{sect:torus}
Consider the flat 2-torus $\mathbb{T}^2=2\pi(\mathbb{R}/\mathbb{Z})\times 2\pi(\mathbb{R}/\mathbb{Z})$, which we write as $[0,2\pi)/\sim\times [0,2\pi)/\sim$, where $\sim$ is the identification at the interval endpoints.
\subsubsection*{Solution to the isoperimetric problem}
There is an infinite family of optimal $\Gamma$ solving (\ref{cheegerconst}):  either $(\{x\}\times [0,2\pi))\cup(\{x+\pi\}\times [0,2\pi))$ (two vertical loops) or $([0,2\pi)\times \{y\})\cup([0,2\pi)\times\{y+\pi\})$ (two horizontal loops).
The value of $\mathbf{h}$ is $2\cdot(2\pi)/((1/2)\cdot (2\pi)^2)=2/\pi$.
One particular solution is shown as black lines in Figure \ref{torusfig}: $\Gamma=\{(\{\pi/2\}\times [0,2\pi))\cup(\{3\pi/2\}\times [0,2\pi))$, $M_1=[\pi/2,3\pi/2]\times [0,2\pi)$, and $M_2=\mathbb{T}^2\setminus M_1$.
\begin{figure}[hbt]
\centering
  \begin{subfigure}{0.3\textwidth}
  \includegraphics[width=\textwidth]{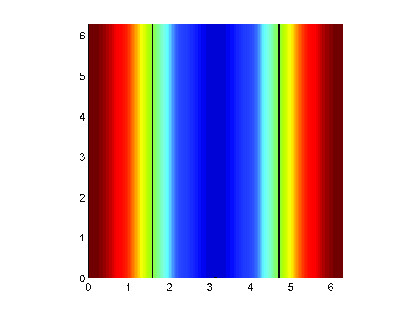}\\
  \caption{Torus $[0,2\pi)\times [0,2\pi)$: Plot of second Laplacian eigenfunction $\cos(x)$ with the the zero level set shown in black.}\label{torusfig}
  \end{subfigure}
  \begin{subfigure}{0.3\textwidth}
  \includegraphics[width=\textwidth]{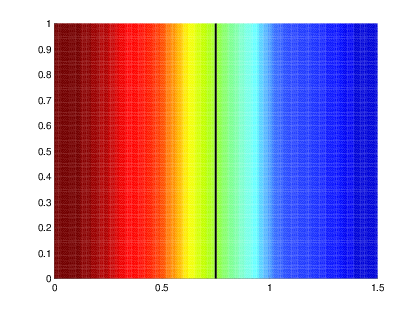}\\
  \caption{Rectangle $[0,a]\times [0,b]$, $a=3/2$, $b=1$: Plot of second Laplacian eigenfunction $\cos(4\pi x/9)$ with the the zero level set shown in black.}\label{rectfig}
  \end{subfigure}
  \begin{subfigure}{0.3\textwidth}
   \includegraphics[width=\textwidth]{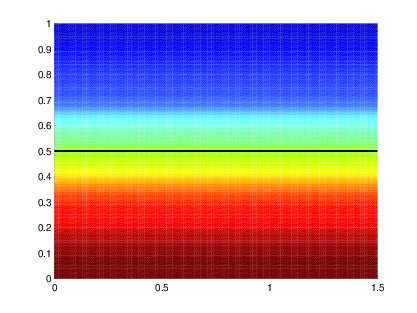}\\
  \caption{Cylinder $[0,a)\times [0,b]$, $a=3/2$, $b=1$: Plot of second Laplacian eigenfunction $\sin(\pi y)$ with the the zero level set shown in black.}\label{cylfig}
  \end{subfigure}
\end{figure}

\subsubsection*{Laplace operator and eigenfunctions}
\sloppy
The Laplace operator has eigenvalues $-(k^2+l^2)$, $k,l\in 0,1,2,\ldots,$ with eigenfunctions $\cos(k x)\cos(l y),\sin(k x)\cos(l y),\cos(kx)\sin(ly),\sin(kx)\sin(ly)$, $k,l\in 0,1,2,\ldots$. Thus the multiplicity of the first nontrivial eigenvalue -1 is 4, and the
corresponding eigenspace is spanned by $\{\cos(x),\sin(x),\cos(y),\sin(y)\}$.
The upper bound for $\mathbf{h}$ provided by Cheeger's inequality is 2.
Note that the zero level sets of the functions $\{\cos(x),\sin(x),\cos(y),\sin(y)\}$ are exactly the optimal disconnecting curves $\Gamma$ discussed above.
The zero level set of one of these functions, $\cos(x)$ is shown in black in Figure \ref{torusfig}.


%

\subsection{The rectangle}
\label{sect:rect}
Consider the rectangle $[0,a]\times [0,b]$, with $a>b$.
\subsubsection*{Solution to the isoperimetric problem}
The problem (\ref{cheegerconst}) has a unique solution $\Gamma$: $\{(a/2,y):0\le y\le b\}$, shown as a black line in Figure \ref{rectfig}.
The corresponding value of $\mathbf{h}$ is $2/a$.
\subsubsection*{Laplacian  operator and eigenfunctions}
The Laplace operator with zero Neumann boundary conditions has eigenvalues $-\pi^2(k^2/a^2+l^2/b^2)$,  $k,l\in 0,1,2,\ldots,$ with corresponding eigenfunctions $\cos(k\pi x/a)\cos(l\pi y/b)$, $k,l\in 0,1,2,\ldots$.
Note that the Neumann boundary conditions are satisfied on the boundary of the rectangle by this set of eigenfunctions.
The first nontrivial eigenvalue is $\lambda_2=-\pi^2/a^2$.
 In the example shown in Figure \ref{rectfig}, $a=3/2$, $b=1$ and the first three eigenvalues are $0,-4\pi^2/9,-\pi^2$, each of unit multiplicity, corresponding to $(k,l)=(0,0),(1,0),(0,1)$.
The corresponding eigenspaces are spanned by $1,\cos(3\pi x/2),\cos(\pi y)$.
Note that the zero level set of the second eigenfunction $\cos(3\pi x/2)$ is exactly the optimal disconnecting curve $\Gamma$, shown in black in Figure \ref{rectfig}.
The upper bound for $\mathbf{h}$ provided by Cheeger's inequality is $2\sqrt{4\pi^2/9}=4\pi/3$.


\subsection{The cylinder}
\label{sect:cyl}
Consider the flat cylinder $a(\mathbb{R}/\mathbb{Z})\times [0,b)$, which we write as $[0,a)/\sim\times [0,b]$, with $a>b$ and the vertical ``edges'' identified.
\subsubsection*{Solution to the isoperimetric problem}
The solution to the isoperimetric problem depends on the relative size of $a$ to $b$.
If $a<2b$ then $\mathbf{h}=a$, with a unique minimising disconnecting curve $\Gamma$: $\{(x,b/2):0\le x\le a\}$.
If $a>2b$ then $\mathbf{h}=2b$, with $\Gamma$ selected from an infinite family of pairs of vertical lines parameterised by $x\in[0,a)$: $\{(x,y):0\le y\le b\}\cup\{(x+a/2,y):0\le y\le b\}$.
\subsubsection*{Laplacian  operator and eigenfunctions}
The Laplace operator has eigenvalues $-\pi^2(4k^2/a^2+l^2/b^2)$, $k,l\in 0,1,2,\ldots,$ with corresponding eigenfunctions $\sin(2k\pi x/a)\cos(l\pi y/b),\cos(2k\pi x/a)\cos(l\pi y/b)$, $k,l\in 0,1,2,\ldots$.
Note that we only have to enforce Neumann boundary conditions on the top and bottom horizontal boundaries of the cylinder.
The leading eigenvalue is  0, and the second eigenvalue depends on the relative size of $a$ and $b$; 
a switch occurs at $a=2b$, matching the corresponding switch in the domain geometry.
 In the example shown in Figure \ref{cylfig}, $a=3/2$, $b=1$ and the first three eigenvalues are $0,-\pi^2,-16\pi^2/9$, with multiplicities 1, 1, and 2, corresponding to $(k,l)=(0,0),(0,1),(1,0)$.
The corresponding eigenspaces are spanned by $1,\cos(\pi y),\{\sin(4\pi x/3),\cos(4\pi x/3)\}$.
The upper bound for $\mathbf{h}$ provided by Cheeger's inequality is $2\pi$.
Note that the zero level set of the second eigenfunction $ \cos(\pi y)$ is exactly the optimal disconnecting curve $\Gamma$, shown in Figure \ref{cylfig}.

\section{Dynamic Isoperimetry}
In this section we extend the concepts of the previous section to a dynamic setting, where $T$ is a $C^\infty$ diffeomorphism from $M$ onto $T(M)$, and $M$, $T(M)$ are compact, connected Riemannian manifolds of vanishing curvature.
Much of the discussion in this paper is for a single iterate of a $T:M\to T(M)$, however, the extension to multiple iterates of the same map, iterates of different maps as would occur in time-dependent dynamical systems, and even a continuum of flow maps generated by a time-dependent ODE is straightforward (see Section \ref{sect:multistep}).
For a single iterate of $T$ we seek sets that have small boundary size relative to volume both \emph{before} and \emph{after} the application of the nonlinear dynamics of $T$.
Thus, if $\Gamma$ is the boundary of a coherent set, one needs to minimise both $\ell_{d-1}(\Gamma)$ and $\ell_{d-1}(T\Gamma)$.
To identify finite-time coherent sets, we propose the following natural dynamic minimisation problem.
\begin{definition}
\label{cheegerdefn}
Define the \emph{dynamic Cheeger constant} $\mathbf{h}^D$ by
\begin{equation}
\label{cheegereqn2}
\mathbf{h}^D:=\inf_\Gamma \frac{\ell_{d-1}(\Gamma)+\ell_{d-1}(T(\Gamma))}{2\min\{\ell(M_1),\ell(M_2)\}}.
\end{equation}
where $\Gamma$ varies over compact $(d-1)$-dimensional $C^\infty$ submanifolds of $M$ that divide $M$ into two disjoint open submanifolds $M_1, M_2$ of $M$.
\end{definition}

In the present paper, to avoid obscuring the key constructions, we focus on volume-preserving $T$.
Note that (\ref{cheegereqn2}) cannot be decomposed into two static minimisation problems because $\Gamma$ is the same in both terms in the numerator of (\ref{cheegereqn2}).

\subsection{A Dynamic Federer-Fleming Theorem}
\label{sect:ff}

It is of theoretical interest (and for the present paper, of interest in dynamical systems applications) to connect the set-based optimisation problem (\ref{cheegereqn2}) with functional optimisation problems.
Two basic tools in differential geometry for doing this are the co-area formula, which connects spatial integrals of the gradient of a function with an integral over the areas of level sets of a function, and Cavalieri's principle, which represents a function as an integral over its level sets.
The Federer-Fleming theorem (Theorem \ref{ffthm}) connects a set-based isoperimetric problem (the (static) Cheeger constant) in an exact way with a functional minimisation problem.
We wish to formulate a dynamic equivalent of this theorem.

Let $M$ be a compact, connected Riemannian manifold of dimension $d\ge 1$ with vanishing curvature, and $T:M\to T(M)$ a volume-preserving diffeomorphism.
We denote by $\mathcal{P}$ the Perron-Frobenius operator of $T$, defined by $\mathcal{P}f=f\circ T^{-1}$ as $T$ is volume-preserving.
\begin{definition}
\label{sobolevdefn}
Define the \emph{dynamic Sobolev constant of $M$}, $\mathbf{s}^D(M)$ by
\begin{equation}
\label{soboleveqn}
\mathbf{s}^D=\inf_{f\in C^\infty} \frac{\|\nabla f\|_1+\|\nabla(\P f)\|_1}{2\inf_{\alpha\in \mathbb{R}} \|f-\alpha\|_{1}}.
\end{equation}
\end{definition}
Related to the above is the alternate Sobolev constant
\begin{equation}
\label{altsoboleveqn}
\hat{\mathbf{s}}^D=\inf_{f\in C^\infty} \frac{\|\nabla f\|_1+\|\nabla(\P f)\|_1}{2\|f-\bar{f}\|_{1}},
\end{equation}
setting $\alpha$ to be the mean value of $f$, $\bar{f}=(1/\ell(M))(\int_M f\ d\ell)$ (see e.g.\ \cite{chavelisoperimetric} p163 for the static version).
Clearly, $\hat{\mathbf{s}}^D\le \mathbf{s}^D$.
We wish to demonstrate a dynamic analogue of the Federer-Fleming theorem.
Our first main result is:
\begin{theorem}[Dynamic Federer-Fleming Theorem]
\label{fflemma}
Let $M$ be a compact, connected $C^\infty$ manifold with vanishing curvature.
Let $T:M\to T(M)$ be a $C^\infty$ volume-preserving diffeomorphism. Then
\begin{equation}
\label{ffeqn}
\mathbf{s}^D=\mathbf{h}^D,
\end{equation}
and further,
\begin{equation}
\label{ffeqn22}
\mathbf{h}^D/2\le \hat{\mathbf{s}}^D\le \mathbf{s}^D=\mathbf{h}^D.
\end{equation}
\end{theorem}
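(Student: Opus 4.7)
The plan is to mirror the classical Federer-Fleming proof, with one key additional ingredient: because $T$ is a volume-preserving $C^\infty$ diffeomorphism, one has $T(\{f=t\})=\{\P f=t\}$ for every $t\in\mathbb{R}$ and every $f\in C^\infty(M)$. Thus the coarea formula, applied simultaneously to $f$ and to $\P f$, together with Cavalieri's principle, is perfectly attuned to the additive numerators in (\ref{cheegereqn2}) and (\ref{soboleveqn}). The argument decomposes into the easier direction $\mathbf{s}^D\le\mathbf{h}^D$, the main direction $\mathbf{h}^D\le\mathbf{s}^D$, and the two bounds involving $\hat{\mathbf{s}}^D$.

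For $\mathbf{s}^D\le\mathbf{h}^D$ I would fix $\eta>0$, choose $\Gamma$ with $M=M_1\cup\Gamma\cup M_2$, $\ell_d(M_1)\le\ell_d(M_2)$, and ratio at most $\mathbf{h}^D+\eta$, and mollify $\mathbf{1}_{M_1}$ by a standard tubular-neighbourhood construction to obtain $f_\delta\in C^\infty(M)$ whose level sets cluster on $\Gamma$. Direct computation (the minimiser is $\alpha=0$) shows $\inf_\alpha\|f_\delta-\alpha\|_1\to\ell_d(M_1)$, and the coarea formula gives $\|\nabla f_\delta\|_1\to\ell_{d-1}(\Gamma)$. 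Since $T$ is a diffeomorphism, $\P f_\delta$ is an analogous mollification of $\mathbf{1}_{T(M_1)}$ with level sets clustering on $T\Gamma$, so also $\|\nabla\P f_\delta\|_1\to\ell_{d-1}(T\Gamma)$. Sending $\delta\to 0$ and then $\eta\to 0$ concludes.

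The direction $\mathbf{h}^D\le\mathbf{s}^D$ is where the bulk of the work lies. Given $f\in C^\infty(M)$, translate by the median $m$ of $f$ so that $\ell_d(\{f>m\})$ and $\ell_d(\{f<m\})$ are each at most $\ell_d(M)/2$, and write $f-m=(f-m)^+-(f-m)^-$. For almost every $t>0$ (by Sard's theorem) the level set $\Gamma_t:=\{f-m=t\}$ is a compact $C^\infty$ hypersurface dividing $M$ into the open pieces $\{f-m>t\}$ and $\{f-m<t\}$, with the former of volume at most $\ell_d(M)/2$; Definition \ref{cheegerdefn} applied to $\Gamma_t$ gives
\begin{equation*}
2\mathbf{h}^D\cdot\ell_d(\{f-m>t\})\le\ell_{d-1}(\Gamma_t)+\ell_{d-1}(T\Gamma_t).
\end{equation*}
Integrating in $t>0$, Cavalieri's principle turns the left side into $2\mathbf{h}^D\|(f-m)^+\|_1$, while the coarea formula (applied to $f$ and separately to $\P f$, using $T\Gamma_t=\{\P f-m=t\}$) turns the right side into the contributions of $\|\nabla f\|_1$ and $\|\nabla\P f\|_1$ supported on $\{f>m\}$ and $\{\P f>m\}$. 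The symmetric estimate for $t<0$, added to this one, yields
\begin{equation*}
2\mathbf{h}^D\|f-m\|_1\le\|\nabla f\|_1+\|\nabla\P f\|_1.
\end{equation*}
Dividing by $2\|f-m\|_1=2\inf_\alpha\|f-\alpha\|_1$ and taking the infimum over $f\in C^\infty$ gives $\mathbf{h}^D\le\mathbf{s}^D$.

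The inequality $\hat{\mathbf{s}}^D\le\mathbf{s}^D$ is immediate from $\inf_\alpha\|f-\alpha\|_1\le\|f-\bar f\|_1$, and $\hat{\mathbf{s}}^D\ge\mathbf{h}^D/2$ follows from $\|f-\bar f\|_1\le 2\|f-m\|_1$, which one obtains by combining the triangle inequality with $|m-\bar f|\,\ell_d(M)\le\|f-m\|_1$; this halves, at worst, the ratio estimated in the previous step. The most delicate point I anticipate is the coarea integration step: one must verify that the inequality from Definition \ref{cheegerdefn} remains valid when the superlevel set $\{f-m>t\}$ is disconnected. Under the natural reading of Definition \ref{cheegerdefn} (open $M_1,M_2$, not necessarily connected) this is automatic; if connectedness is required, I would decompose $\{f-m>t\}$ into its connected components and apply the estimate componentwise, using that each component has volume at most $\ell_d(M)/2$ and that the boundaries of the components together make up $\Gamma_t$, with the same accounting after applying $T$.
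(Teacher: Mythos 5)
Your proposal is correct and follows essentially the same strategy as the paper's proof: approximate $\mathbf{1}_{M_1}$ (the paper uses a $\pm 1$-valued analogue) to get $\mathbf{s}^D\le\mathbf{h}^D$, then run coarea together with Cavalieri's principle against the superlevel sets $\{f-m>t\}$ and their images to get $\mathbf{h}^D\le\mathbf{s}^D$, exploiting that $T(\{f=t\})=\{\P f=t\}$ so the coarea contributions from $f$ and $\P f$ match up with $\ell_{d-1}(\Gamma_t)+\ell_{d-1}(T\Gamma_t)$. Your observation about the coarea step is on point: Definition \ref{cheegerdefn}, unlike the static Cheeger constant (\ref{cheegerconst}), does not require $M_1,M_2$ to be connected, so the estimate applies directly to $\Gamma_t=\partial\{f-m>t\}$ even when the superlevel set is disconnected, exactly as the paper needs. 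The one place where you diverge slightly is part (c): the paper reruns the coarea estimate with $\beta=\bar f$ (losing the median property) restricted to the smaller-volume half and then compensates, whereas you keep the median estimate $\|\nabla f\|_1+\|\nabla\P f\|_1\ge 2\mathbf{h}^D\|f-m\|_1$ and convert to the mean via $\|f-\bar f\|_1\le 2\|f-m\|_1$ (from $|m-\bar f|\,\ell_d(M)=|\int_M(m-f)\,d\ell|\le\|f-m\|_1$). This is a cleaner and more modular variant that reuses the main inequality rather than re-deriving a version of it; both yield the same factor $1/2$.
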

\begin{proof}
See appendix.
\end{proof}


\subsection{A Dynamic Cheeger Inequality}
\label{sect:cheegerineq}
The (static) Cheeger inequality (Theorem \ref{cheegerthm}) is an $L^2$-based result while the (static) Federer-Fleming equality (Theorem \ref{ffthm}) is $L^1$-based.
The advantage of $L^2$ is that one obtains a nice spectral theory for $\triangle$ from the Hilbert space structure, and crucial variational characterisations of the eigenvalues.
One pays for this convenience by obtaining an inequality, rather than equality.
Nevertheless, as we have seen in Sections \ref{sect:torus}--\ref{sect:cyl}, the level sets of the Laplacian eigenfunctions carry significant information and provide good solutions to the original set-based isoperimetric problem (\ref{cheegerconst}).
We wish to replicate these properties for the dynamic Cheeger constant $\mathbf{h}^D$ and a dynamic version of the Laplace operator.
We define the latter by
\begin{equation}
\label{hattriangle}
\hat{\triangle}:=(\triangle+\mathcal{P}^*\triangle\mathcal{P})/2.
\end{equation}
The spectral properties of this operator are developed in Section \ref{sect:dynlapspec}, but we say a few words here about the intuition behind this definition.
Consider a function $f:M\to \mathbb{R}$ on $M$ at the initial time from which we extract level sets, as in Figures (\ref{torusfig})--(\ref{cylfig}).
The first term in (\ref{hattriangle}), $\triangle$, is the Laplacian on the domain $M$ and of obvious importance for providing information on decompositions of $M$.
The second term $\mathcal{P}^*\triangle\mathcal{P}$ first pushes the function on $f$ on $M$ forward to a function $\P f$ on $T(M)$, possibly undergoing nonlinear distortion.
One then applies the Laplacian to $\P f$ on $T(M)$ to
 obtain geometric information on $T(M)$, and finally pulls the result back to $M$ with $\P^*$, ready to be combined with the result from the first term $\triangle$.
We note that in fact $\mathcal{P}^*\triangle\mathcal{P}$ is the Laplace-Beltrami operator for the pullback of the Euclidean metric on $T(M)$.
Consider $\triangle_\delta:C^\infty(T(M),\mathbb{R})\circlearrowleft$ as the Laplace-Beltrami operator on the Riemannian manifold $(T(M),\delta)$, where $\delta$ denotes the Riemannian metric (in the present context, $\delta$ is the trivial Euclidean metric).
Pulling $\delta$ back under $T$ we obtain the Riemannian metric $T^*\delta$ and the map $T:(M,T^*\delta)\to (T(M),\delta)$ is an isometry.
One can now write $\triangle_{T^*\delta}f=(\triangle_\delta(f\circ T^{-1}))\circ T=\mathcal{P}^*\triangle_\delta\mathcal{P}$; see e.g.\ p27 \cite{chaveleigenvalues}.

Our second new result is a dynamic Cheeger inequality, which highlights the importance of eigenfunctions of the operator $\hat{\triangle}$.

\begin{theorem}
\label{cheegerdynthm}
Let $M$ be a compact, connected $C^\infty$ manifold with vanishing curvature, and $T:M\to T(M)$ be a $C^\infty$ and volume-preserving diffeomorphism.
\begin{enumerate}
\item If $M$ is boundaryless, then let $\lambda_2$ be the smallest magnitude nonzero eigenvalue of $\hat{\triangle}$.
\item If $\partial M\neq\emptyset$, denote by $\mathbf{n}(x)$ the outward unit normal at $x\in \partial M$.
Let $\lambda_2$ be the smallest magnitude nonzero eigenvalue for the $L^2$-eigenproblem
\begin{equation}
\label{strongeqn0}
\hat{\triangle}u(x)=\lambda u(x),\quad x\in \mathring{M},
\end{equation}
with boundary condition
\begin{equation}
\label{strongbc0}
\nabla u(x)\cdot\left[\left(I+DT(x)^{-1}\left(DT(x)^{-1}\right)^\top\right) \mathbf{n}(x)\right]=0,\quad x\in\partial M.
\end{equation}
\end{enumerate}
Then
\begin{equation}
\label{dyncheegereqn}
\mathbf{h}^{D}\le 2\sqrt{-\lambda_2}.
\end{equation}
\end{theorem}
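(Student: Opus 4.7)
The plan is to adapt the classical proof of the Cheeger inequality, with the usual Dirichlet energy replaced by the symmetric bilinear form
\begin{equation*}
\mathcal{E}(u,v):=\tfrac{1}{2}\Bigl(\int_M \nabla u\cdot\nabla v\, d\ell_d + \int_{T(M)} \nabla(\P u)\cdot\nabla(\P v)\, d\ell_d\Bigr)
\end{equation*}
associated to $\hat{\triangle}$. First I would verify that the boundary condition (\ref{strongbc0}) is exactly what makes integration-by-parts on both summands combine to give $\langle -\hat{\triangle}u,v\rangle_{L^2(M)}=\mathcal{E}(u,v)$: the boundary integral from the $\P^*\triangle\P$ piece lives on $\partial T(M)$, and pulling it back through $T$ -- using that $\mathbf{n}_{T(M)}(T(x))$ is parallel to $(DT(x)^{-1})^\top\mathbf{n}_M(x)$ and that the induced surface Jacobian equals $\|(DT(x)^{-1})^\top\mathbf{n}_M(x)\|$ (since $\det DT=1$) -- produces a $\partial M$-integrand $\nabla u\cdot DT^{-1}(DT^{-1})^\top\mathbf{n}_M$; summed with the usual Neumann contribution $\nabla u\cdot\mathbf{n}_M$ from $\triangle$, this is exactly (\ref{strongbc0}). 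Self-adjointness of $\hat\triangle$ then yields the Rayleigh characterisation $-\lambda_2=\inf\bigl\{\mathcal{E}(f,f)/\|f\|_2^2 : f\in C^\infty(M),\ \int_M f\,d\ell_d=0\bigr\}$.

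Next, take an eigenfunction $\phi_2$ for $\lambda_2$; after flipping sign if necessary, assume $\ell_d(\{\phi_2>0\})\le\ell_d(M)/2$, and set $g:=\phi_2^+$. Testing the eigenequation $\hat\triangle\phi_2=\lambda_2\phi_2$ against $g$, using $\phi_2 g=g^2$, the pointwise identity $\nabla\phi_2\cdot\nabla g=|\nabla g|^2$ a.e., and the commutation $\P g=(\P\phi_2)^+$, integration by parts yields the energy identity
\begin{equation*}
\|\nabla g\|_{L^2(M)}^2 + \|\nabla \P g\|_{L^2(T(M))}^2 \;=\; -2\lambda_2\,\|g\|_2^2.
\end{equation*}

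Now set $v:=g^2$. Its superlevel sets $\{v>s\}=\{g>\sqrt{s}\}$ lie inside $\{\phi_2>0\}$ and so have $\ell_d$-volume $\le\ell_d(M)/2$. Applying the co-area formula on $M$ and on $T(M)$ (with $T\{g>t\}=\{\P g>t\}$), changing variables $s=t^2$, and bounding $\ell_{d-1}(\{g=t\})+\ell_{d-1}(T\{g=t\})\ge 2\mathbf{h}^D\ell_d(\{g>t\})$ level-by-level via Definition \ref{cheegerdefn}, gives
\begin{equation*}
\|\nabla v\|_1 + \|\nabla \P v\|_1 \;\ge\; 4\mathbf{h}^D\int_0^\infty t\,\ell_d(\{g>t\})\,dt \;=\; 2\mathbf{h}^D\,\|g\|_2^2.
\end{equation*}
Cauchy--Schwarz controls the left-hand side: $\|\nabla v\|_1=\|2g\nabla g\|_1\le 2\|g\|_2\|\nabla g\|_2$, and similarly $\|\nabla \P v\|_1\le 2\|g\|_2\|\nabla\P g\|_2$, using the $L^2$-isometry $\|\P g\|_2=\|g\|_2$. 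Dividing by $2\|g\|_2$, applying $a+b\le\sqrt{2}\sqrt{a^2+b^2}$, and invoking the energy identity produces
\begin{equation*}
\mathbf{h}^D\|g\|_2 \;\le\; \sqrt{2}\sqrt{\|\nabla g\|_2^2+\|\nabla\P g\|_2^2} \;=\; 2\sqrt{-\lambda_2}\,\|g\|_2,
\end{equation*}
which is the claimed inequality.

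The main obstacle I anticipate is the first step: confirming that (\ref{strongbc0}) is precisely the boundary condition making $\hat\triangle$ self-adjoint and yielding the clean form $\mathcal{E}$. The bookkeeping requires pulling the $\partial T(M)$-surface integral back through $T|_{\partial M}$ and matching the induced $(d-1)$-dimensional Jacobian with the operator $DT^{-1}(DT^{-1})^\top$ appearing in (\ref{strongbc0}); the volume-preservation identity $\det DT=1$ is what allows the normalizations to cancel. A secondary but essentially standard technical point is justifying co-area and integration-by-parts for the merely Lipschitz function $g=\phi_2^+$, which is handled by Sard/mollification arguments as in the classical case.
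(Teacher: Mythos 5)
Your proof is correct, and it takes a genuinely different route from the paper. The paper (following Ledoux's proof, as it cites) works with an \emph{arbitrary} smooth test function $f$: it chooses a median $\beta=m$, decomposes $(f-m)^2=(f^+)^2+(f^-)^2$, applies the co-area/Cheeger bound to both pieces, and obtains the Poincar\'e-type inequality $(\mathbf{h}^D)^2\le 2\bigl(\|\nabla f\|_2^2+\|\nabla\P f\|_2^2\bigr)/\|f-\bar f\|_2^2$ for all $f$; it then inserts $f=u_2$ and invokes the full Rayleigh-quotient characterisation of $\lambda_2$ from Theorem \ref{dynlapthm}. You instead go directly through the eigenfunction $\phi_2$ in the style of Cheeger's original argument: restrict to the positive part $g=\phi_2^+$ supported on the smaller nodal domain, derive the energy identity $\|\nabla g\|_2^2+\|\nabla\P g\|_2^2=-2\lambda_2\|g\|_2^2$ by testing the weak eigenequation against $g$ (using $\nabla\phi_2\cdot\nabla g=|\nabla g|^2$ a.e.\ and, crucially, the commutation $\P g=(\P\phi_2)^+$ because $\P$ is a composition operator), and then run the co-area/Cauchy--Schwarz machinery only on $g^2$. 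Both arguments rely on the same integration-by-parts/Jacobian bookkeeping to identify (\ref{strongbc0}) as the natural Neumann condition, and both need Theorem \ref{dynlapthm} for the regularity of $\phi_2$; but the paper's route additionally yields a functional Poincar\'e inequality valid for all $f$, whereas yours is shorter and needs only the existence of a smooth weak eigenpair rather than the extremal characterisation. The constants match, the level-by-level Cheeger bound $\ell_{d-1}(\{g=t\})+\ell_{d-1}(T\{g=t\})\ge 2\mathbf{h}^D\ell_d(\{g>t\})$ is used identically, and the substitution $s=t^2$ bookkeeping in your co-area step is correct.

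One small caveat you should be explicit about (it affects the paper's proof equally): both arguments implicitly assume that almost every level set $\{g=t\}$ (resp.\ $\{f=\beta+t\}$) is an admissible disconnecting hypersurface in the sense of Definition \ref{cheegerdefn}, which requires a Sard-type argument together with the standard observation that the multi-component Cheeger ratio dominates $\mathbf{h}^D$. You flag the Sard/mollification issue at the end; just make sure to also address the possible multi-component structure of level sets when applying Definition \ref{cheegerdefn}.
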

\begin{proof}
See appendix.
\end{proof}

An intuitive explanation of the term
$\left((\nabla u(x))^\top DT(x)^{-1}\right)\cdot\left(\left(DT(x)^{-1}\right)^\top\mathbf{n}(x)\right)$
 in (\ref{strongbc0}) is that (i) $(\nabla u(x))^\top DT(x)^{-1}=\nabla(u\circ T^{-1})(T(x))$ (gradient of the pushforward of $u$ by $T$ at $T(x)$) and (ii) $\left(DT(x)^{-1}\right)^\top\mathbf{n}(x)$ is normal to $\partial T(M)$ at $T(x)$.
Thus, $\left((\nabla u(x))^\top DT(x)^{-1}\right)\cdot\left(\left(DT(x)^{-1}\right)^\top\mathbf{n}(x)\right)=0$ can be viewed as a natural pullback of a zero Neumann boundary condition on $\partial T(M)$ at $T(x)$.
In terms of metrics, one has $\nabla_\delta(u\circ T^{-1})=(\nabla_{T^*\delta}(u))\circ T^{-1}$.
\begin{remark}
\label{proofremark}
Using the above pullback interpretation of the boundary condition and the pullback interpretation of $\hat{\triangle}$, one can produce shorter, coordinate-free proofs of Theorems \ref{cheegerdynthm} and \ref{dynlapthm}, instead of the coordinate-based proofs in the Appendix.
Similarly, the proof of Theorem \ref{fflemma} can also be easily approached from this point of view.
\end{remark}

\subsection{Multiple time-steps}
\label{sect:multistep}
Let us now consider a composition of several maps $T_1,\ldots,T_{n-1}$, denoting $T^{(i)}:=T_i\circ\cdots\circ T_2\circ T_1$, $i=1,\ldots,n-1$.
These maps might arise, for example, as time-$\tau$ maps of a time-dependent flow.
If we wish to track the evolution of a coherent set under these maps, penalising the boundary of the evolved set $T^{(i)}(\Gamma)$ after the application of each $T_i$, then we can define
\begin{equation}
\label{hdynn}
\mathbf{h}_n^{D}:=\inf_\Gamma \frac{\frac{1}{n}\sum_{i=0}^{n-1}\ell_{d-1}(T^{(i)}\Gamma)}{\min\{\ell(M_1),\ell(M_2)\}},
\end{equation}
as the natural generalisation of $\mathbf{h}^D$.

In continuous time, we consider a (possibly time-dependent) ODE $\dot{x}=F(x,t)$, where $F$ is $C^\infty$ on $M\times[0,\tau]$.
The flow maps $T^{(t)}:M\to T^{(t)}(M)$ are then smooth\footnote{To weaken the smoothness assumption on $F(x,\cdot)$, but still obtain smooth flow maps, see \cite{arnold} Appendix B.3.} for each $t\in[0,\tau]$.
One can define
\begin{equation}
\label{hdynt}
\mathbf{h}_{[0,\tau]}^{D}:=\inf_\Gamma \frac{\frac{1}{\tau}\int_0^\tau \ell_{d-1}(T^{(t)}\Gamma)\ dt}{\min\{\ell(M_1),\ell(M_2)\}},
\end{equation}
as a time-continuous generalisation of $\mathbf{h}^D$.

Analogously, setting $\mathcal{P}^{(i)}f=f\circ (T^{(i)})^{-1}$ and $\mathcal{P}^{(t)}f=f\circ (T^{(t)})^{-1}$, one can define dynamic Sobolev constants for multiple discrete time steps or over a continuous time interval:
\begin{equation}
\label{sdynnt}
\mathbf{s}^D_n=\inf_{f\in C^\infty} \frac{\frac{1}{n}\sum_{i=0}^{n-1}\|\nabla ( \mathcal{P}^{(i)}f)\|_1}{\inf_{\alpha\in \mathbb{R}} \|f-\alpha\|_{1}},\qquad\qquad \mathbf{s}^D_{[0,\tau]}=\inf_{f\in C^\infty} \frac{\frac{1}{\tau}\int_0^\tau\|\nabla ( \mathcal{P}^{(t)}f)\|_1\ dt}{\inf_{\alpha\in \mathbb{R}} \|f-\alpha\|_{1}}.
\end{equation}

\begin{corollary}[Multistep Dynamic Federer-Fleming Theorem]
\label{ffcor}
Let $M$ be a compact, connected $C^\infty$ manifold with vanishing curvature, and $T^{(i)}$, $i=1,\ldots,n-1$ (resp.\ $T^{(t)}, t\in[0,\tau]$) be generated by a sequence of $C^\infty$ volume-preserving diffeomorphisms (resp.\ be smooth flow maps generated by a volume-preserving ODE $\dot{x}=F(x,t)$).
Then
\begin{equation}
\label{ffeqnn}
\mathbf{h}^D_n/2\le \hat{\mathbf{s}}^D_n\le \mathbf{s}^D_n=\mathbf{h}^D_n,
\end{equation}
resp.\
\begin{equation}
\label{ffeqnt}
\mathbf{h}^D_{[0,\tau]}/2\le \hat{\mathbf{s}}^D_{[0,\tau]}\le \mathbf{s}^D_{[0,\tau]}=\mathbf{h}^D_{[0,\tau]}.
\end{equation}
\end{corollary}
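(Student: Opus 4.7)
The plan is to run the proof of Theorem \ref{fflemma} with the two-term arithmetic mean replaced by the average over $n$ iterates (discrete case) or by the time-integral $\tau^{-1}\int_0^\tau$ (continuous case). Every step of the two-term argument combines its numerator contributions additively and linearly in the single underlying function $f$ (or disconnecting surface $\Gamma$), so the whole structure of the proof is preserved.

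I would first dispose of the easy chain $\hat{\mathbf{s}}^D_n\le\mathbf{s}^D_n$, which comes from $\|f-\bar f\|_1\ge \inf_{\alpha\in\mathbb{R}}\|f-\alpha\|_1$. For $\mathbf{s}^D_n\le\mathbf{h}^D_n$, fix a disconnecting $\Gamma$ with components $M_1,M_2$ and take smooth ``transition'' approximations $f_\eta$ of $\mathbf{1}_{M_1}$ supported in an $\eta$-neighbourhood of $\Gamma$. Volume-preservation of each $T^{(i)}$ combined with the co-area representation applied to the pushed-forward level sets gives $\|\nabla(\mathcal{P}^{(i)}f_\eta)\|_1\to \ell_{d-1}(T^{(i)}\Gamma)$ as $\eta\to 0$, and $\inf_\alpha\|f_\eta-\alpha\|_1\to\min\{\ell(M_1),\ell(M_2)\}$; averaging over $i$ and passing to the limit yields the bound. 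The substantive direction is $\mathbf{h}^D_n/2\le \hat{\mathbf{s}}^D_n$. Take $f\in C^\infty$ near-optimal for $\hat{\mathbf{s}}^D_n$. The co-area formula, together with volume-preservation of each $T^{(i)}$ (change of variables applied to $(d-1)$-volumes under the smooth diffeomorphism $T^{(i)}$), gives
$$\frac{1}{n}\sum_{i=0}^{n-1}\|\nabla(\mathcal{P}^{(i)}f)\|_1\;=\;\int_{-\infty}^{\infty} A(r)\,dr,\qquad A(r):=\frac{1}{n}\sum_{i=0}^{n-1}\ell_{d-1}(T^{(i)}f^{-1}(r)).$$
A layer-cake computation with $\alpha$ a median of $f$ shows $\|f-\bar f\|_1\le 2\int_{-\infty}^{\infty} B(r)\,dr$, where $B(r):=\min\{\ell(\{f>r\}),\ell(\{f\le r\})\}$. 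A mean-value argument then produces some $r^*$ with $A(r^*)/B(r^*)$ bounded by twice the Sobolev ratio, and Sard's theorem ensures that for a.e.\ such regular value $\Gamma^*:=f^{-1}(r^*)$ is a smooth disconnecting hypersurface witnessing $\mathbf{h}^D_n\le 2\hat{\mathbf{s}}^D_n$.

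For the continuous-time statement (\ref{ffeqnt}), the identical argument proceeds with $\frac{1}{n}\sum_{i=0}^{n-1}$ replaced throughout by $\frac{1}{\tau}\int_0^\tau\,dt$. The one point that is not entirely routine is the Fubini-type interchange
$$\frac{1}{\tau}\int_0^\tau\!\int_{-\infty}^{\infty}\!\ell_{d-1}(T^{(t)}f^{-1}(r))\,dr\,dt \;=\; \int_{-\infty}^{\infty}\tilde A(r)\,dr,\quad \tilde A(r):=\frac{1}{\tau}\int_0^\tau\!\ell_{d-1}(T^{(t)}f^{-1}(r))\,dt,$$
which requires joint $(t,r)$-measurability of $(t,r)\mapsto\ell_{d-1}(T^{(t)}f^{-1}(r))$. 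I expect this to be the main technical obstacle. It should follow from smoothness of the flow in $t$ (giving continuity in $t$ for each $r$ regular for $f$, via the area formula applied to the varying diffeomorphism $T^{(t)}$) together with the Hausdorff-measure definition of $\ell_{d-1}$. Once the interchange is justified, the mean-value selection of $r^*$ and the identification $\Gamma^*=f^{-1}(r^*)$ carry over verbatim from the discrete case.
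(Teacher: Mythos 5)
Your overall plan — re-run the proof of Theorem \ref{fflemma} with the two-term average replaced by the $n$-term or time-integral average, exploiting the linearity of all the key estimates — is exactly the paper's own proof strategy for this corollary; the paper states it in one line. Your layer-cake/mean-value/Sard selection argument is a valid (if stylistically different) variant of the direct integral inequality the paper uses in part (b) of the proof of Theorem \ref{fflemma}: where the paper bounds $\ell_{d-1}(\partial D_t)+\ell_{d-1}(\partial\tilde D_t)\geq 2\mathbf h^D\,\ell(D_t)$ pointwise in $t$ and integrates, you extract a single good level set $r^*$; the content is the same. Your flagging of the $(t,r)$-measurability needed for the Fubini interchange in the continuous-time case is actually more careful than the paper, which simply asserts the argument goes through.

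There is one gap in the logic as written. You establish (i) $\hat{\mathbf s}^D_n\le\mathbf s^D_n$, (ii) $\mathbf s^D_n\le\mathbf h^D_n$ via $f_\eta\to\mathbf 1_{M_1}$, and (iii) $\mathbf h^D_n\le 2\hat{\mathbf s}^D_n$ via the mean-value/Sard selection. Chaining these gives $\mathbf h^D_n/2\le\hat{\mathbf s}^D_n\le\mathbf s^D_n\le\mathbf h^D_n$, which is the full inequality string \emph{except} for the asserted equality $\mathbf s^D_n=\mathbf h^D_n$: you never prove $\mathbf s^D_n\ge\mathbf h^D_n$, and (iii) only yields the weaker $\mathbf s^D_n\ge\mathbf h^D_n/2$. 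The fix is immediate from the machinery you already set up: repeat your co-area + mean-value argument with $\alpha$ equal to a median $m$ of $f$ (which attains $\inf_\alpha\|f-\alpha\|_1$), so that $\inf_\alpha\|f-\alpha\|_1=\|f-m\|_1=\int B(r)\,dr$ exactly with no factor of $2$; the mean-value selection then produces $r^*$ with $A(r^*)/B(r^*)\le\mathbf s^D_n$, giving $\mathbf h^D_n\le\mathbf s^D_n$ and closing the equality. This is precisely what the paper does in part (b) of the proof of Theorem \ref{fflemma} before deriving part (c) for $\hat{\mathbf s}^D$, and it carries over to multiple steps by the same linearity you already invoke.
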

\begin{proof}
See appendix.
\end{proof}

Theorem \ref{cheegerdynthm} also naturally extends to multiple time steps.
\begin{corollary}[Multistep Cheeger Inequality -- discrete time]
\label{cheegerncor}
Let $M$ be a compact, connected $C^\infty$ manifold with vanishing curvature, and $T^{(i)}$, $i=1,\ldots,n-1$ be generated by a sequence of $C^\infty$ volume-preserving diffeomorphisms.
Define
\begin{equation}
\label{bartrianglen}
\hat{\triangle}^{(n)}:=\frac{1}{n}\sum_{i=0}^{n-1}(\mathcal{P}^{(i)})^*\triangle\mathcal{P}^{(i)},
\end{equation}
where $\mathcal{P}^{(i)}f=f\circ (T^{(i)})^{-1}$.
\begin{enumerate}
\item If $\partial M=\emptyset$, let $\lambda_2$ be the smallest magnitude nonzero eigenvalue of $\hat{\triangle}^{(n)}$.
\item If $\partial M\neq\emptyset$, denote by $\mathbf{n}(x)$ the outward unit normal at $x\in \partial M$.
Let $\lambda_2$ be the smallest magnitude nonzero eigenvalue for the $L^2$ eigenproblem
\begin{equation}
\label{strongeqnmultn}
\hat{\triangle}^{(n)}u(x)=\lambda u(x),\quad x\in \mathring{M},
\end{equation}
with boundary condition
\begin{equation}
\label{strongbcmultn}
\nabla u(x)\cdot\left[\sum_{i=0}^{n-1}DT^{(i)}(x)^{-1}\left(DT^{(i)}(x)^{-1}\right)^\top \mathbf{n}(x)\right]=0,\quad x\in\partial M.
\end{equation}
\end{enumerate}
Then
\begin{equation}
\label{cheegermultistepn}
\mathbf{h}_n^{D}\le 2\sqrt{-\lambda_2}.
\end{equation}
\end{corollary}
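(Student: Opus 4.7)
The plan is to mirror the single-step argument of Theorem \ref{cheegerdynthm}, using Corollary \ref{ffcor} in place of Theorem \ref{fflemma} and the Rayleigh quotient of the self-adjoint realisation of $-\hat{\triangle}^{(n)}$ in place of that of $-\hat{\triangle}$. The three ingredients needed are: (i) a variational characterisation of $-\lambda_2$; (ii) a Cheeger-type truncation of a $\lambda_2$-eigenfunction into two parts, each supported on a set of measure at most $\ell(M)/2$; and (iii) a Cauchy--Schwarz bridge that converts the resulting $L^2$ Rayleigh bound into the $L^1$ quantity controlled by multistep Federer--Fleming.

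First I would verify that, under the boundary condition (\ref{strongbcmultn}), the operator $-\hat{\triangle}^{(n)}$ is self-adjoint and non-negative on a dense domain in $L^2(M)$, with Dirichlet form
\begin{equation*}
E^{(n)}(u,v) \;=\; \frac{1}{n}\sum_{i=0}^{n-1}\int_M \nabla(\mathcal{P}^{(i)}u)\cdot\nabla(\mathcal{P}^{(i)}v)\,d\ell.
\end{equation*}
This follows by integrating by parts in each summand $(\mathcal{P}^{(i)})^*\triangle\mathcal{P}^{(i)}$ via the pullback-of-the-Euclidean-metric viewpoint of Remark \ref{proofremark}: the image-side Neumann contribution from the $i$th summand pulls back to a boundary term on $\partial M$ proportional to $\nabla u\cdot DT^{(i)}(x)^{-1}(DT^{(i)}(x)^{-1})^\top \mathbf{n}(x)$, and the summed contributions vanish precisely under (\ref{strongbcmultn}). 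Standard elliptic spectral theory then yields
\begin{equation*}
-\lambda_2 \;=\; \inf\!\left\{E^{(n)}(u,u)/\|u\|_2^2 \;:\; u\in C^\infty(M),\ u\not\equiv 0,\ \textstyle\int_M u\,d\ell=0\right\}.
\end{equation*}

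Second, I would carry out the Cheeger truncation. Take a $\lambda_2$-eigenfunction $u_2$ (so $\int u_2=0$), let $t$ be a median of $u_2$, and decompose $v=u_2-t=v_+-v_-$ with $v_\pm\ge 0$ having disjoint supports of measure at most $\ell(M)/2$. Because $\mathcal{P}^{(i)}v_+$ and $\mathcal{P}^{(i)}v_-$ have disjoint supports in $T^{(i)}(M)$ for every $i$, the cross-terms $\nabla(\mathcal{P}^{(i)}v_+)\cdot\nabla(\mathcal{P}^{(i)}v_-)$ vanish a.e., so $E^{(n)}(v,v)=E^{(n)}(v_+,v_+)+E^{(n)}(v_-,v_-)$. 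The shift enlarges only the denominator, $\|v\|_2^2=\|u_2\|_2^2+t^2\ell(M)$, so $E^{(n)}(v,v)/\|v\|_2^2\le -\lambda_2$, whence at least one choice $f\in\{v_+,v_-\}$ satisfies $E^{(n)}(f,f)\le -\lambda_2\|f\|_2^2$.

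Finally I would test Corollary \ref{ffcor} against $f^2$. Since $f\ge 0$ has support of measure $\le\ell(M)/2$, one has $\inf_\alpha\|f^2-\alpha\|_1=\|f\|_2^2$; the pointwise identity $\nabla(\mathcal{P}^{(i)}f)^2=2(\mathcal{P}^{(i)}f)\nabla(\mathcal{P}^{(i)}f)$ combined with Cauchy--Schwarz and volume preservation gives $\|\nabla\mathcal{P}^{(i)}(f^2)\|_1\le 2\|f\|_2\|\nabla\mathcal{P}^{(i)}f\|_2$. Combining these with $\mathbf{h}_n^D=\mathbf{s}_n^D$ from Corollary \ref{ffcor} and a second (discrete) Cauchy--Schwarz over $i$,
\begin{equation*}
\mathbf{h}_n^D \;\le\; \frac{2}{n}\sum_{i=0}^{n-1}\frac{\|\nabla\mathcal{P}^{(i)}f\|_2}{\|f\|_2} \;\le\; 2\sqrt{E^{(n)}(f,f)/\|f\|_2^2} \;\le\; 2\sqrt{-\lambda_2}.
\end{equation*}
The main obstacle is the first step: correctly identifying the self-adjoint realisation of $\hat{\triangle}^{(n)}$ and checking that (\ref{strongbcmultn}) is the exact boundary condition forcing the image-side Neumann terms from the $n$ summands to cancel simultaneously. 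Once the Dirichlet form $E^{(n)}$ and its variational formula are in hand, the Cheeger truncation and Cauchy--Schwarz argument are direct analogues of the single-step proof of Theorem \ref{cheegerdynthm}.
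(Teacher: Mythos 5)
Your proposal is correct and follows the same broad strategy as the paper's terse proof, which simply says to repeat the single--step argument of Theorem~\ref{cheegerdynthm} term by term, with the discrete Cauchy--Schwarz inequality $\frac{1}{n}\sum_i\|\nabla\mathcal{P}^{(i)}f\|_2\le\bigl(\frac{1}{n}\sum_i\|\nabla\mathcal{P}^{(i)}f\|_2^2\bigr)^{1/2}$ playing the role of the single nonlinear step (\ref{penultimate}), and invoking the multistep spectral theory of Remark~\ref{spectrummultistep}. The ingredients you identify --- variational characterisation of $-\lambda_2$, median truncation, two Cauchy--Schwarz estimates, and a co-area/Federer--Fleming bound --- match the paper's.

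Two organisational differences are worth noting. First, the paper does not invoke Corollary~\ref{ffcor}; the proof of Theorem~\ref{cheegerdynthm} re-derives only the easy inequality direction directly from the co-area formula (\ref{coareaeqn})--(\ref{heqn}), which is strictly weaker than the full Federer--Fleming equality you call on. Both are valid, but yours imports more than is needed. Second, the paper keeps both halves $f^+,f^-$ of the median-shifted function and sums the resulting co-area bounds ((\ref{eqn1})), whereas you select the single half $v_+$ or $v_-$ with the smaller Rayleigh quotient (the Alon--Milman-style pigeonhole). These produce the same constant $2\sqrt{-\lambda_2}$ and are standard interchangeable variants. One small caveat in your write-up, shared by the paper: the truncations $v_\pm$ and the test function $f^2$ are only Lipschitz, not $C^\infty$, so using them in the Sobolev infimum (\ref{sdynnt}) strictly requires the mollification argument already present in the proof of Theorem~\ref{fflemma}; you should make that density step explicit if this is to stand on its own.
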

\begin{proof}
See Appendix.
\end{proof}
\begin{corollary}[Multistep Cheeger Inequality -- continuous time]
\label{cheegertcor}
Let $M$ be a compact, connected $C^\infty$ manifold with vanishing curvature, and $T^{(t)}, t\in[0,\tau]$ be smooth flow maps.
Define
\begin{equation}
\label{bartrianglet}
\hat{\triangle}^{(\tau)}:=\frac{1}{\tau}\int_0^\tau(\mathcal{P}^{(t)})^*\triangle\mathcal{P}^{(t)}\ dt,
\end{equation}
where $\mathcal{P}^{(t)}f=f\circ (T^{(t)})^{-1}$.
\begin{enumerate}
\item If $\partial M=\emptyset$, let $\lambda_2$ be the smallest magnitude nonzero eigenvalue of $\hat{\triangle}^{(\tau)}$.
\item If $\partial M\neq\emptyset$, denote by $\mathbf{n}(x)$ the outward unit normal at $x\in \partial M$.
Let $\lambda_2$ be the smallest magnitude nonzero eigenvalue for the $L^2$ eigenproblem
\begin{equation}
\label{strongeqnmultt}
\hat{\triangle}^{(\tau)}u(x)=\lambda u(x),\quad x\in \mathring{M},
\end{equation}
with boundary condition
\begin{equation}
\label{strongbcmultt}
\nabla u(x)\cdot\left[\int_0^\tau DT^{(t)}(x)^{-1}\left(DT^{(t)}(x)^{-1}\right)^\top \mathbf{n}(x)\ dt\right]=0,\quad x\in\partial M.
\end{equation}
\end{enumerate}
Then
\begin{equation}
\label{cheegermultistept}
\mathbf{h}_\tau^{D}\le 2\sqrt{-\lambda_2}.
\end{equation}
\end{corollary}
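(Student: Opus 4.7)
The plan is to transpose the discrete-multistep argument of Corollary \ref{cheegerncor} (which itself mirrors Theorem \ref{cheegerdynthm}) to the continuous-time setting, replacing the finite average $\tfrac{1}{n}\sum_{i=0}^{n-1}$ by the time average $\tfrac{1}{\tau}\int_0^\tau$ throughout. The three ingredients needed are (i) self-adjointness of $\hat{\triangle}^{(\tau)}$ under the boundary condition (\ref{strongbcmultt}), (ii) a variational identity linking $\lambda_2$ to a time-averaged Dirichlet energy, and (iii) the continuous-time Federer--Fleming equality (\ref{ffeqnt}).

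For self-adjointness, I would apply Green's identity on each pushed-forward domain $T^{(t)}(M)$, change variables on $\partial T^{(t)}(M)$ using
\[
\mathbf{n}_{\partial T^{(t)}(M)}\circ T^{(t)}(x)=\frac{(DT^{(t)}(x))^{-\top}\mathbf{n}(x)}{|(DT^{(t)}(x))^{-\top}\mathbf{n}(x)|},
\]
together with the volume-preserving surface Jacobian $|(DT^{(t)}(x))^{-\top}\mathbf{n}(x)|$ (these factors cancel), and then invoke Fubini to bring the $t$-integral inside. The collected boundary term becomes
\[
\int_{\partial M}\nabla u(x)\cdot\left[\tfrac{1}{\tau}\int_0^\tau DT^{(t)}(x)^{-1}(DT^{(t)}(x)^{-1})^\top\,dt\;\mathbf{n}(x)\right]v(x)\,d\sigma(x),
\]
which vanishes precisely by the averaged condition (\ref{strongbcmultt}). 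Taking $u=v$ yields the quadratic-form identity
\[
\langle\hat{\triangle}^{(\tau)}u,u\rangle=-\tfrac{1}{\tau}\int_0^\tau\|\nabla \mathcal{P}^{(t)}u\|_{L^2(T^{(t)}(M))}^2\,dt,
\]
so $\hat{\triangle}^{(\tau)}$ is a symmetric negative semi-definite operator with constants in its kernel, and the usual compact-manifold spectral theory (applied to each $(\mathcal{P}^{(t)})^*\triangle\mathcal{P}^{(t)}$ and averaged) produces the discrete spectrum $0=\lambda_1>\lambda_2\ge\cdots$.

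For the Cheeger estimate I would use the classical truncation trick: take a $\lambda_2$-eigenfunction $\phi$ with $\int_M\phi\,d\ell=0$, choose $c$ so that $A:=\{\phi>c\}$ satisfies $\ell(A)\le\ell(M)/2$, and set $g=(\phi-c)\mathbf{1}_A$. After a standard smooth-approximation step, the eigenvalue equation and $g=0$ on $\partial A$ give $\tfrac{1}{\tau}\int_0^\tau\|\nabla\mathcal{P}^{(t)}g\|_2^2\,dt\le -\lambda_2\|g\|_2^2$. Cauchy--Schwarz in space (using volume preservation to write $\|\mathcal{P}^{(t)}g\|_2=\|g\|_2$) followed by Cauchy--Schwarz in $t$ then gives
\[
\tfrac{1}{\tau}\int_0^\tau\|\nabla\mathcal{P}^{(t)}(g^2)\|_1\,dt\le 2\sqrt{-\lambda_2}\,\|g^2\|_1.
\]
Since $g^2\ge 0$ is supported in a set of volume at most $\ell(M)/2$, a direct computation shows $\inf_{\alpha\in\mathbb{R}}\|g^2-\alpha\|_1=\|g^2\|_1$, so Corollary \ref{ffcor} applied to (a smoothing of) $g^2$ converts the above into $\mathbf{h}^D_{[0,\tau]}\le 2\sqrt{-\lambda_2}$.

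The main obstacle is the integrated nature of the boundary condition: only the $t$-average of $DT^{(t)}(x)^{-1}(DT^{(t)}(x)^{-1})^\top\mathbf{n}(x)$ need be tangent to $\partial M$, not each individual $t$-slice. This forces the careful Fubini/change-of-variables bookkeeping in the Green's identity step, and is exactly the point at which the coordinate-free pullback-metric reformulation mentioned in Remark \ref{proofremark} would be considerably shorter. Secondary technicalities are the continuity of $t\mapsto DT^{(t)}(\cdot)$ on the compact interval $[0,\tau]$ (which supplies joint measurability of the boundary integrand) and the smooth approximation of the truncated function $g^2$ needed to apply Corollary \ref{ffcor}, which is stated for $C^\infty$ test functions.
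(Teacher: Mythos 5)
Your proposal is essentially correct, but it follows a genuinely different route from the paper's, and there is one sign issue in the truncation step that needs repair.

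The paper's proof of Corollary~\ref{cheegertcor} simply transposes the proof of Theorem~\ref{cheegerdynthm}: apply the co-area formula to $(f^{\pm})^2$ where $f^{\pm}$ are the positive/negative parts of $f-m$ with $m$ the median, combine with the definition of $\mathbf{h}^D_{[0,\tau]}$, apply Cauchy--Schwarz in space, and only at the very end insert $f=u_2$ and invoke the variational characterisation of $\lambda_2$ from the multistep version of Theorem~\ref{dynlapthm}. The single place where the continuous-time case requires a genuinely new step is the replacement of $\bigl(\|\nabla f\|_2+\|\nabla\P f\|_2\bigr)^2\le 2\bigl(\|\nabla f\|_2^2+\|\nabla\P f\|_2^2\bigr)$ by Cauchy--Schwarz in $t$, i.e.\ $\bigl(\int_0^\tau\|\nabla\P^{(t)}f\|_2\,dt\bigr)^2\le\tau\int_0^\tau\|\nabla\P^{(t)}f\|_2^2\,dt$. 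You do have this step too, but embedded in a different skeleton: you instead work with the eigenfunction $\phi$ from the outset, use Green's identity to reduce the averaged Rayleigh quotient of the truncation $g=(\phi-c)\mathbf{1}_A$ to $-\lambda_2$, pass to $g^2$ with two applications of Cauchy--Schwarz, and close by invoking the Federer--Fleming equality (Corollary~\ref{ffcor}). This is more modular in that it reuses Corollary~\ref{ffcor} as a black box rather than redoing the co-area estimate, at the cost of having to handle the non-smoothness of $g$ and $g^2$ and an extra integration by parts against the eigenvalue PDE.

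The one genuine gap is the sign of $c$. You have $\tfrac{1}{\tau}\int_0^\tau\|\nabla\P^{(t)}g\|_2^2\,dt=-\lambda_2\int_M g\phi\,d\ell$ and $\int_M g\phi\,d\ell=\|g\|_2^2+c\int_A(\phi-c)\,d\ell$, with $\int_A(\phi-c)\,d\ell>0$. Since $-\lambda_2>0$, the desired inequality $\tfrac{1}{\tau}\int_0^\tau\|\nabla\P^{(t)}g\|_2^2\,dt\le-\lambda_2\|g\|_2^2$ holds if and only if $c\le 0$. The prescription ``choose $c$ so that $\ell(\{\phi>c\})\le\ell(M)/2$'' does not guarantee this: if the median of $\phi$ is positive, any admissible $c$ is positive and the inequality reverses. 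The standard fix is to first replace $\phi$ by $-\phi$ if necessary so that $\ell(\{\phi>0\})\le\ell(M)/2$, and then take $c=0$, giving equality $\int_M g\phi\,d\ell=\|g\|_2^2$. The paper's median decomposition $f-m=f^+-f^-$ avoids this by treating both signs simultaneously, which is one advantage of its route.
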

\begin{proof}
See Appendix.
\end{proof}
\begin{remark}
If one does not wish to track the length of the evolved $\Gamma$ except at the initial and final time, one would instead use (\ref{hattriangle}) with $\mathcal{P}=\mathcal{P}^{(n-1)}$ or $\mathcal{P}=\mathcal{P}^{(\tau)}$.
\end{remark}

\section{Spectral properties of the dynamic Laplacian}
\label{sect:dynlapspec}
The following result summarises important properties of the operator $\hat{\triangle}=(\triangle+\P^*\triangle \mathcal{P})/2$.

\begin{theorem}
\label{dynlapthm}
Let $M$ be a compact, connected $C^\infty$ manifold with vanishing curvature, and $T:M\to T(M)$ be a $C^\infty$, volume-preserving diffeomorphism.
\begin{itemize}
\item If $M$ is boundaryless, let $\lambda, u$ denote solutions to the $L^2$ eigenproblem $(1/2)(\triangle+\mathcal{P}^*\triangle\mathcal{P})u=\lambda u$ on $M$.
\item If $\partial M\neq\emptyset$, denote by $\mathbf{n}(x)$ the outward unit normal at $x\in \partial M$.
Let $\lambda, u$ denote solutions to the $L^2$-eigenproblem
\begin{equation}
\label{strongeqn}
(1/2)(\triangle+\P^*\triangle\P)u(x)=\lambda u(x),\quad x\in \mathring{M},
\end{equation}
with boundary condition
\begin{equation}
\label{strongbc}
\nabla u(x)\cdot\left[\mathbf{n}(x)+DT(x)^{-1}\left(DT(x)^{-1}\right)^\top \mathbf{n}(x)\right]=0,\quad x\in\partial M.
\end{equation}
\end{itemize}
The solutions $\lambda, u$ satisfy the following properties.

\begin{enumerate}
\item The eigenvalues form a decreasing sequence $0=\lambda_1> \lambda_2>\cdots$ with $\lambda_n\to-\infty$.
\item The corresponding eigenfunctions $u_1,u_2,\ldots$ are $C^\infty$ on $M$ and eigenfunctions corresponding to distinct eigenvalues are pairwise orthogonal in $L^2$.
\item One has the variational characterisation of eigenvalues: if $u_1,u_2,\ldots$ are arranged to be orthonormal, denoting $X_k=\span\{u_1,u_2,\ldots,u_k\}$
\begin{equation}
\label{variational}
\lambda_k=-\inf_{u\in X,\langle u,u_i\rangle=0, i=1,\ldots,k-1}\frac{\int_M |\nabla u|^2\ d\ell+\int_{T(M)}|\nabla(\P u)|^2\ d\ell}{2\int_M u^2\ d\ell},
\end{equation}
with the infimum achieved only when $u=u_k$.
\end{enumerate}
\end{theorem}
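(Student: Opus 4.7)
The plan is to realize $\hat{\triangle}$ as a self-adjoint operator associated with a natural symmetric Dirichlet form, verify it is uniformly elliptic with smooth coefficients, and then invoke standard spectral theory for elliptic self-adjoint operators on a compact manifold. Using the pullback interpretation from Remark \ref{proofremark}, one writes $\hat{\triangle}=\tfrac{1}{2}(\triangle_\delta+\triangle_{T^*\delta})$, the average of the Laplace--Beltrami operators for the flat metric $\delta$ and the pullback metric $T^*\delta$. Since $T$ is a $C^\infty$ diffeomorphism of compact $M$, both metrics are smooth and uniformly equivalent, so $\hat{\triangle}$ is a second-order, uniformly elliptic operator with $C^\infty$ coefficients.

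The natural bilinear form is
\begin{equation*}
a(u,v)=\tfrac{1}{2}\int_M \nabla u\cdot\nabla v\ d\ell+\tfrac{1}{2}\int_{T(M)}\nabla(\P u)\cdot\nabla(\P v)\ d\ell,\qquad u,v\in H^1(M).
\end{equation*}
Integrating by parts in each term (and, in the second term, changing variables via $T$, using volume-preservation, the chain rule $\nabla(\P u)(T(x))=(DT(x)^{-\top})\nabla u(x)$, and the fact that $T$ maps $\partial M$ to $\partial T(M)$) produces
\begin{equation*}
a(u,v)=-\int_M (\hat{\triangle}u)\,v\ d\ell+\tfrac{1}{2}\int_{\partial M}\bigl(\nabla u\cdot[\,I+DT^{-1}(DT^{-1})^\top\,]\mathbf{n}\bigr)\,v\ dS.
\end{equation*}
Thus (\ref{strongbc}) is precisely the natural oblique-derivative condition that kills the boundary term and makes $-\hat{\triangle}$ the self-adjoint operator associated with $a$. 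The conormal vector $\mathbf{b}=[I+DT^{-1}(DT^{-1})^\top]\mathbf{n}$ satisfies $\mathbf{b}\cdot\mathbf{n}=1+|(DT^{-1})^\top\mathbf{n}|^2\ge 1$, so the condition is uniformly non-characteristic, i.e.\ a regular coercive oblique-derivative condition.

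Standard spectral theory now delivers everything. The form $a$ is continuous on $H^1(M)$ and coercive on $H^1(M)/\mathbb{R}$ (by uniform ellipticity and the Poincar\'e inequality), while the embedding $H^1(M)\hookrightarrow L^2(M)$ is compact by Rellich--Kondrachov. Consequently $-\hat{\triangle}$ has compact resolvent, and the spectral theorem yields an $L^2$-orthonormal basis of eigenfunctions with discrete, non-negative eigenvalues tending to $+\infty$; translating back to $\hat{\triangle}$ gives the claimed sequence $0=\lambda_1>\lambda_2>\cdots\to-\infty$ (listing distinct values, with finite-dimensional eigenspaces), and eigenfunctions of distinct eigenvalues are orthogonal because $\hat{\triangle}$ is symmetric. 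The value $\lambda_1=0$ is attained only by constants because $a(u,u)=0$ forces $\nabla u\equiv 0$. Smoothness $u_k\in C^\infty(M)$ follows from standard elliptic regularity for smooth second-order elliptic operators under coercive oblique-derivative boundary conditions. The variational identity (\ref{variational}) is the Courant--Fischer min-max formula applied to $a$ on the $L^2$-orthogonal complement of the span of $u_1,\ldots,u_{k-1}$.

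The main technical obstacle is the boundary-term computation that produces the specific combination $I+DT^{-1}(DT^{-1})^\top$ in (\ref{strongbc}). One must carefully pull back the outward unit normal to $\partial T(M)$ under $T$ and track the transformation of the boundary measure; the cleanest route is to use the isometry $T\colon(M,T^*\delta)\to(T(M),\delta)$ from Remark \ref{proofremark}, which makes the identification coordinate-free, but a direct coordinate calculation is also feasible. Once this identification is secured, the uniform positivity $\mathbf{b}\cdot\mathbf{n}\ge 1$ guarantees the hypotheses of the classical Hilbert-space and elliptic regularity theorems, and the remaining arguments are routine.
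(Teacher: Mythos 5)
Your proposal is correct and follows precisely the ``shorter, coordinate-free'' route that the paper itself flags as an alternative in Remark~\ref{proofremark}: you exploit the identification $\hat{\triangle}=\tfrac12(\triangle_\delta+\triangle_{T^*\delta})$ and the isometry $T\colon(M,T^*\delta)\to(T(M),\delta)$, and then outsource the spectral and regularity work to the standard abstract machinery (symmetric coercive form on $H^1$, Rellich--Kondrachov, spectral theorem for compact resolvent, Courant--Fischer, coercive oblique/conormal elliptic regularity). The paper's appendix instead takes the more elementary, hands-on route: it proves directly that the Dirichlet-type functional $F=F_1+F_2$ is $C^1$ with a Lipschitz-continuous derivative on $W^{1,2}$, establishes existence of a constrained minimizer via a parallelogram-identity Cauchy-sequence argument (Lemma~\ref{lemmaattain}), obtains the weak eigenvalue equation by Lagrange multipliers, builds the eigenvalue sequence inductively from successive Rayleigh quotients, and derives the boundary condition \eqref{strongbc} by an explicit coordinate computation with Green's identity, the chain rule, and Lemma~\ref{sl} for the transformation of the boundary measure. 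What your approach buys is brevity and transparency, once the reader accepts the elliptic-form framework; what the paper's approach buys is self-containedness and an explicit, checkable derivation of the somewhat unusual boundary condition, which (as you yourself identify) is the one genuinely delicate step. Your verification that $\mathbf{b}\cdot\mathbf{n}=1+\lvert(DT^{-1})^\top\mathbf{n}\rvert^2\ge 1$ is a nice observation that makes the regularity of the boundary condition immediate and is not stated in the paper. Minor point of terminology: this is the natural conormal (Neumann) boundary condition for the divergence-form operator associated with $a$, rather than a generic oblique-derivative condition, but since the conormal is not aligned with $\mathbf{n}$ your phrasing does no harm and the regularity theory you invoke applies.
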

Our main focus is the eigenvalue $\lambda_2$ and the corresponding eigenfunction $u_2$.
We will see that $u_1\equiv 1$ and therefore that $\int_M u_2\ d\ell=0$ by $L^2$-orthogonality to $u_1$.
Appendix \ref{sec:weakexist} contains the proofs of items 1 and 3 of Theorem \ref{dynlapthm} and Appendix \ref{sec:ellipticity} contains the proofs of item 2 and the boundary conditions.

\begin{remark}
\label{spectrummultistep}
Using identical arguments, one can also show a multiple time step version of Theorem \ref{dynlapthm}, where
$(1/2)(\triangle+\P^*\triangle\P)$ is replaced with either (\ref{bartrianglen}) or (\ref{bartrianglet}), and the boundary condition (\ref{strongbc}) is replaced with either (\ref{strongbcmultn}) or (\ref{strongbcmultt}).
\end{remark}

\subsection{Objectivity}

We demonstrate that the operator $\hat{\triangle}$ behaves in a very predictable way when the phase space is observed in a time-dependent rotating and translating frame.
In particular, we show that the method of extracting coherent sets from eigenvectors of $\hat{\triangle}$ (described in Section \ref{sect:numerics}) is \emph{objective} or \emph{frame-invariant}, meaning that the method produces the same features when subjected to time-dependent ``proper orthogonal + translational'' transformations; see \cite{truesdellnoll}.

In continuous time, to test for objectivity, one makes a time-dependent coordinate change $x\mapsto Q(t)x+b(t)$ where $Q(t)$ is a proper othogonal linear transformation and $b(t)$ is a translation vector, for $t\in[t_0,t_1]$.
The discrete time analogue is to imagine we begin in the frame given by $\Phi_{t_0}(M)$,
where $\Phi_{t_0}(x)=Q(t_0)x+b(t_0)$, and end in the frame $\Phi_{t_1}(M)$, where $\Phi_{t_1}(x)=Q(t_1)x+b(t_1)$.
We are concerned with the deterministic transformation $\dot{T}:\Phi_{t_0}(M)\to \Phi_{t_1}(M)$, which is given by $\dot{T}=\Phi_{t_1}\circ T\circ \Phi_{t_0}^{-1}$.
This change of frames is summarised in the commutative diagram below.
$$
\begin{CD}
M @>T>> T(M)\\
@VV\Phi_{t_0}V @VV\Phi_{t_1}V\\
\Phi_{t_0}(M) @>\dot{T}>> \Phi_{t_0}\circ T(M)
\end{CD}
$$
Corresponding to $\dot{T}$ is the operator $\dot{\hat{\triangle}}=\triangle+\P^*_{\dot{T}}\triangle \P_{\dot{T}}$, where $\P_{\dot{T}}=\P_{\Phi_{t_1}}\circ \P\circ \P_{\Phi_{t_0}}^{-1}$.
If we were observing the dynamics in the frames given by $\Phi_{t_0}$ and $\Phi_{t_1}$ we would compute eigenfunctions of $\dot{\hat{\triangle}}$.
\begin{theorem}
\label{objthm}
The operator $\hat{\triangle}$ in the original frame and the operator $\dot{\hat{\triangle}}$ in the transformed frame satisfy the commutative diagram:
$$
\begin{CD}
L^2(M) @>\hat{\triangle}>> L^2(M)\\
@VV\P_{\Phi_{t_0}}V @VV\P_{\Phi_{t_0}}V\\
L^2(\Phi_{t_0}(M))@>\dot{\hat{\triangle}}>>L^2(\Phi_{t_1}(M))
\end{CD}
$$
Consequently, if $f$ solves
\begin{equation}
\label{origevalprob}
\hat{\triangle}f=\lambda f\quad\mbox{ on $\mathring{M}$},
\end{equation}
then
\begin{equation}
\label{transfevalprob}
\dot{\hat{\triangle}}(\P_{\Phi_{t_0}}f)=\lambda (\P_{\Phi_{t_0}}f)\quad\mbox{ on $\Phi_{t_0}(\mathring{M})$}.
\end{equation}
Furthermore, if $\partial M\neq \emptyset$, then if
\begin{equation}
\label{strongorigbc}
\nabla f(x)\cdot\left[\mathbf{n}(x)+DT(x)^{-1}\left(DT(x)^{-1}\right)^\top\mathbf{n}(x)\right]=0,\quad x\in\partial M,
\end{equation}
one has
\begin{equation}
\label{strongtransfbc}
\nabla (\P_{\Phi_{t_0}}f)(x)\cdot\left[\dot{\mathbf{n}}(x)+D\dot{T}(x)^{-1}\left(D\dot{T}(x)^{-1}\right)^\top\dot{\mathbf{n}}(x)\right]=0,\quad x\in\partial(\Phi_{t_0}(M)),
\end{equation}
where $\dot{\mathbf{n}}(x)=Q(t_0)\mathbf{n}(\Phi_{t_0}^{-1}x)$.
%
%
\end{theorem}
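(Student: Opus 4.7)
\medskip
\noindent\textbf{Proof proposal for Theorem \ref{objthm}.}

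The plan is to reduce everything to two elementary facts about rigid motions: (i) $\Phi_{t_0}$ and $\Phi_{t_1}$ are Euclidean isometries, so $\P_{\Phi_{t_i}}$ is unitary on $L^2$ and commutes with the Laplacian; (ii) their derivatives are the orthogonal matrices $Q(t_i)$, which is what makes the quadratic form $A\mapsto Q A Q^\top$ preserve both the structure $DT^{-1}(DT^{-1})^\top$ and the normal vector. Once these are set up, the commutative diagram and the boundary-condition transformation drop out by direct calculation.

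First I would establish the isometry ingredients. Since $\Phi_{t_i}(x)=Q(t_i)x+b(t_i)$ with $Q(t_i)$ proper orthogonal, $\Phi_{t_i}$ preserves Lebesgue measure; hence $\P_{\Phi_{t_i}}$ is unitary and $\P_{\Phi_{t_i}}^{\,*}=\P_{\Phi_{t_i}}^{\,-1}$. Because rigid motions preserve the Euclidean metric, the chain rule gives $\triangle(f\circ\Phi_{t_i}^{-1})=(\triangle f)\circ\Phi_{t_i}^{-1}$, i.e.\ $\triangle\,\P_{\Phi_{t_i}}=\P_{\Phi_{t_i}}\triangle$. Next I would unwind the definition $\P_{\dot T}=\P_{\Phi_{t_1}}\circ\P\circ\P_{\Phi_{t_0}}^{-1}$ to obtain
\begin{equation*}
\P_{\dot T}\P_{\Phi_{t_0}}=\P_{\Phi_{t_1}}\P,\qquad \P_{\dot T}^{\,*}=\P_{\Phi_{t_0}}\P^{\,*}\P_{\Phi_{t_1}}^{-1},
\end{equation*}
using the unitarity just noted. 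Combining these with the commutation of $\triangle$ and each $\P_{\Phi_{t_i}}$ yields
\begin{equation*}
\P_{\dot T}^{\,*}\triangle\,\P_{\dot T}\P_{\Phi_{t_0}}f
=\P_{\Phi_{t_0}}\P^{\,*}\P_{\Phi_{t_1}}^{-1}\triangle\,\P_{\Phi_{t_1}}\P f
=\P_{\Phi_{t_0}}\P^{\,*}\triangle\,\P f,
\end{equation*}
and adding $\triangle\P_{\Phi_{t_0}}f=\P_{\Phi_{t_0}}\triangle f$ gives $\dot{\hat\triangle}\,\P_{\Phi_{t_0}}f=\P_{\Phi_{t_0}}\hat\triangle f$, which is the commutative diagram. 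The eigenvalue statement (\ref{transfevalprob}) then follows by applying $\P_{\Phi_{t_0}}$ to both sides of (\ref{origevalprob}).

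For the boundary condition, I would write $y=\Phi_{t_0}(x)$ with $x\in\partial M$. Since $\Phi_{t_0}$ is a rigid motion, $\partial(\Phi_{t_0}(M))=\Phi_{t_0}(\partial M)$ and the outward unit normal is $\dot{\mathbf n}(y)=Q(t_0)\mathbf n(x)$. The chain rule gives $D\dot T(y)=Q(t_1)\,DT(x)\,Q(t_0)^\top$, whence
\begin{equation*}
D\dot T(y)^{-1}\bigl(D\dot T(y)^{-1}\bigr)^{\!\top}
= Q(t_0)\,DT(x)^{-1}\bigl(DT(x)^{-1}\bigr)^{\!\top} Q(t_0)^\top,
\end{equation*}
using $Q(t_1)^\top Q(t_1)=I$. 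Applying this to $\dot{\mathbf n}(y)=Q(t_0)\mathbf n(x)$ collapses the middle $Q(t_0)^\top Q(t_0)=I$, so
\begin{equation*}
\dot{\mathbf n}(y)+D\dot T(y)^{-1}(D\dot T(y)^{-1})^{\!\top}\dot{\mathbf n}(y)
= Q(t_0)\bigl[\mathbf n(x)+DT(x)^{-1}(DT(x)^{-1})^{\!\top}\mathbf n(x)\bigr].
\end{equation*}
Finally $\nabla(\P_{\Phi_{t_0}}f)(y)=Q(t_0)\nabla f(x)$, and taking the dot product the two $Q(t_0)$'s cancel (again via $Q(t_0)^\top Q(t_0)=I$), reducing (\ref{strongtransfbc}) to (\ref{strongorigbc}).

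The main obstacle is just bookkeeping in the boundary-condition part: one must track the Jacobians, their transposes, and the normal vector carefully and remember that the \emph{two} orthogonal matrices $Q(t_0)$ and $Q(t_1)$ play asymmetric roles (one cancels in the middle of the sandwich $A\mapsto QAQ^\top$, the other cancels against the gradient on the outside). Once the algebra is written out in column-vector form with an explicit choice of convention for $\nabla$ versus $D$, no further difficulty arises, because the argument uses only orthogonality of $Q(t_i)$ and never depends on the specific dynamics $T$.
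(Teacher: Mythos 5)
Your proof is correct and follows essentially the same route as the paper's: you invoke the key fact that the Euclidean Laplacian commutes with rigid motions (the paper isolates this as a separate lemma and proves it by computing the trace of the conjugated Hessian, where you simply cite it), use unitarity of $\P_{\Phi_{t_i}}$ to unwind $\dot{\hat\triangle}$, and handle the boundary condition by the same chain-rule computation exploiting $Q(t_0)^\top Q(t_0)=Q(t_1)^\top Q(t_1)=I$. The only cosmetic difference is that you organize the operator cancellation slightly more compactly (sliding $\triangle$ through $\P_{\Phi_{t_1}}^{-1}$ directly), whereas the paper first rewrites $\dot{\hat\triangle}=\triangle+\P_{\Phi_{t_0}}\P^*\triangle\P\,\P_{\Phi_{t_0}}^{-1}$ and then applies it to $\P_{\Phi_{t_0}}f$; these are the same computation in a different order.
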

It follows from Theorem \ref{objthm} that the coherent sets extracted on $M$ from e.g.\ level sets of the eigenfunctions of $\dot{\hat{\triangle}}$ will be transformed versions (under $\Phi_{t_0}$) of those extracted from $\hat{\triangle}$, as required for objectivity.

\section{Zero-diffusion limit of an analytic diffusion-based framework}
\label{sect:zerolimit}

The paper \cite{F13} introduced an analytic methodology for finding finite-time coherent sets, formalising prior numerical work \cite{FSM10}.
This methodology was based around smoothings of $\P$.
In \cite{F13}, one defined smoothing operators $\mathcal{D}_{M,\epsilon}:L^2(M)\to \mathcal{H}_{1/2}(M_\epsilon)$, $\mathcal{D}_{T(M_\epsilon),\epsilon}:L^2(T(M_\epsilon))\to \mathcal{H}_{1/2}(T(M)_\epsilon)$ where  $\mathcal{H}_{1/2}(M_\epsilon), \mathcal{H}_{1/2}(T(M)_\epsilon)$ denote H\"{o}lder functions with exponent 1/2 on $\epsilon$-neighbourhoods of $M$ and $T(M_\epsilon)$, respectively.
The operators considered in \cite{F13} were
$\mathcal{D}_{M,\epsilon} f(y)=\int_M \alpha_\epsilon(x-y)f(x)\ d\ell(x)$, $y\in M_\epsilon$ and $\mathcal{D}_{T(M_\epsilon),\epsilon} f(y)=\int_{T(M_\epsilon)} \alpha_\epsilon(x-y)f(x)\ d\ell(x)$, $y\in T(M)_\epsilon$, where $\alpha_\epsilon(x)=\mathbf{1}_{B_\epsilon(0)}/\ell(B_\epsilon(0))$, corresponds to smoothing on a local $\epsilon$-ball.
In \cite{F13}, the operator $\mathcal{L}_\epsilon:L^2(M)\to L^2(T(M)_\epsilon)$, $\mathcal{L}_\epsilon=\mathcal{D}_{T(M_\epsilon),\epsilon}\P\mathcal{D}_{M,\epsilon}$ was introduced\footnote{in \cite{F13} there is an additional normalisation term required for non-Lebesgue reference measures and non-Lebesgue-preserving $T$.  In the present paper, as $T$ is volume preserving and volume is our reference measure, we eschew this normalisation term here.} and used to identify coherent sets for $T$ in phase space $M$.
The reason for the diffusion operators $\mathcal{D}_\epsilon$ were two-fold.
\begin{enumerate}
\item
Firstly, as $T$ is often invertible (e.g.\ the time-$t$ map of some smooth flow), subsets of $M$ are simply deformed by $T$, they do not ``disperse'', and one could argue that every set is ``coherent'' in the sense that it is non-dispersive.
Let us consider the action of $\mathcal{L}_\epsilon$ on $\mathbf{1}_{A}$, where the latter represents a subset $A\subset M$ by its characteristic function;  we think of $\mathbf{1}_{A}$ as a uniform mass distribution on $A$.
Applying $\mathcal{L}_\epsilon$, we first have $\mathcal{D}_{M,\epsilon}$ acting on $\mathbf{1}_{A}$, which removes from $A$ some mass within distance $\epsilon$ of the boundary of $A$.
The resulting function is then transformed dynamically by $\P$, and will be supported on an $\epsilon$-neighbourhood of $T(A)$.
Finally, we apply $\mathcal{D}_{T(M_\epsilon)}$ again, so that some mass within a distance $\epsilon$ of the boundary of the support of $\P\mathcal{D}_{M,\epsilon}\mathbf{1}_A$ is ejected from this support.
These ideas are quantified in the proof of Lemma 6 \cite{F13}.
In this way, the boundary size of both $A$ and $T(A)$ are penalised because the amount of mass ejected by the operators $\mathcal{D}_{M,\epsilon},\mathcal{D}_{T(M_\epsilon),\epsilon}$ is proportional to the boundary sizes.
\item Secondly, in order to find a set $A$ with minimal combined boundary sizes for $A$ and $T(A)$, \cite{F13} used minimisation properties of the singular vectors of $\mathcal{L}_\epsilon$;  in particular, the sets $A$ and $T(A)$ were estimated from the left/right singular vectors corresponding to the second largest singular value of $\mathcal{L}_\epsilon$ (the leading singular value is always 1 by construction).
    To use this variational machinery $\mathcal{L}_\epsilon$ needs to be compact, and it was shown in \cite{F13} that $\mathcal{D}_{M,\epsilon}, \mathcal{D}_{T(M_\epsilon)}$ also played the technical role of ensuring compactness of $\mathcal{L}_\epsilon$ acting on $L^2$ functions.
\end{enumerate}
The singular vector of $\mathcal{L}_\epsilon$ that corresponds to the initial time (prior to application of $T$) is an eigenvector of $\mathcal{A}_\epsilon:=\mathcal{L}_\epsilon^*\mathcal{L}_\epsilon$;  one pushes forward and then pulls back.
Without any diffusion operators, this would read $\mathcal{A}_0=\P^*\P$;  deterministically pushing forward and deterministically pulling back.
Because $T$ is volume-preserving and invertible, $\P f=f\circ T^{-1}$ and $\P^*=f\circ T$.
Thus $\mathcal{A}_0$ is the identity operator, and one lacks compactness and a ``second'' eigenvalue.
Without diffusion, there is no distinguished coherent set, all sets are equally coherent as they are merely distorted, not dispersed, by the deterministic dynamics over the finite time duration encoded in $T$.

However, one can ask about higher order terms when $\epsilon$ is  close to zero.
We show that with the right scaling in $\epsilon$, one can make sense of an expression like
\begin{equation}
\label{difflimit}
\mathcal{B}f(x):=\lim_{\epsilon\to 0}\frac{(\mathcal{L}_\epsilon^*\mathcal{L}_\epsilon-I)f(x)}{\epsilon^\beta},
\end{equation}
with $\mathcal{B}$ capturing the essential effects of tiny $\epsilon$-diffusion \emph{without  explicitly including} that diffusion.

We slightly modify and generalise the diffusion operators from \cite{F13}.
Let $q:M\to\mathbb{R}^+$ be a nonnegative density with compact support, with mean the origin, and with covariance matrix $c\cdot I$, where $I$ is the $d\times d$ identity matrix.
We scale $q$ to form $q_\epsilon(x)=q(x/\epsilon)/\epsilon^d$;  $q_\epsilon$ will play the role of the previous $\alpha_\epsilon$, and obviously $q(x)=\mathbf{1}_{B_1(0)}/\ell(B_1(0))$ is one example of a density satisfying the above conditions.
We redefine $\mathcal{D}_{M,\epsilon}f(x)=\int_M q_\epsilon(x-y)f(y)\ d\ell(y)$, $x\in \mathring{M}$, and $\mathcal{D}_{T(M),\epsilon}f(x)=\int_{T(M)} q_\epsilon(x-y)f(y)\ d\ell(y)$, $x\in \mathring{T(M)}$,
where $\epsilon=\epsilon(x)$ is sufficiently small that both operators preserve integrals (i.e.\ $\int_M f\ d\ell=\int_M \mathcal{D}_{M,\epsilon}f\ d\ell$ and $\int_{T(M)} f\ d\ell=\int_{T(M)} \mathcal{D}_{T(M),\epsilon}f\ d\ell$).
In the sequel we use the definition $\mathcal{L}_\epsilon=\mathcal{D}_{T(M),\epsilon}\P\mathcal{D}_{M,\epsilon}$.
With the additional assumptions $\int_M q_\epsilon(x-y)^2\ d\ell(y)d\ell(x), \int_{T(M)} q_\epsilon(x-y)^2\ d\ell(y)d\ell(x)<\infty$, one has $\mathcal{L}_\epsilon:L^2(M)\to L^2(T(M))$ is compact as required in \cite{F13}.
The following theorem shows that one can in fact take the scaling limit (\ref{difflimit}) with $\beta=2$ and that $\mathcal{B}$ is a scalar multiple (the variance of the diffusion) of $\hat{\triangle}$.

\begin{theorem}
\label{analcvgce}
Let $M$ be a connected, compact Riemannian manifold of vanishing curvature, $f:M \to\mathbb{R}$ be $C^3$, and $T:M\to T(M)$ be $C^3$ and volume-preserving.
Let $q:M\to\mathbb{R}^+$ be a nonnegative density with compact support, with mean the origin, and covariance matrix $c\cdot I$, where $I$ is the $d\times d$ identity matrix, and let $\mathcal{L}_\epsilon=\mathcal{D}_{T(M),\epsilon}\P\mathcal{D}_{M,\epsilon}$ be defined as above.
One has
\begin{equation}
\label{diffform1thm}\lim_{\epsilon\to 0} \frac{(\mathcal{L}_\epsilon^*\mathcal{L}_\epsilon-I)f(x)}{\epsilon^2}=c\cdot(\triangle+\P^*\triangle\P)f(x),
\end{equation}
for each $x\in \mathring{M}$.
\end{theorem}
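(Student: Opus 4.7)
The plan is to establish the pointwise limit by asymptotically expanding each of the four smoothing operators in
$\mathcal{L}_\epsilon^*\mathcal{L}_\epsilon = \mathcal{D}_{M,\epsilon}^*\,\P^*\,\mathcal{D}_{T(M),\epsilon}^*\,\mathcal{D}_{T(M),\epsilon}\,\P\,\mathcal{D}_{M,\epsilon},$
and then composing. Two facts drive everything: (i) $T$ is volume-preserving and bijective onto $T(M)$, so $\P$ is a unitary operator $L^2(M)\to L^2(T(M))$ with $\P^*\P=I$ and $\P^*g = g\circ T$; (ii) for a fixed interior point and all sufficiently small $\epsilon$, each smoothing operator acts as a genuine convolution whose support never touches the boundary, so a local Taylor analysis is valid.

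First I would prove the fundamental expansion: for any $g\in C^3$ and any fixed $y\in \mathring{M}$, for $\epsilon$ small enough that $y+\epsilon\,\mathrm{supp}(q)\subset M$,
\begin{equation*}
\mathcal{D}_{M,\epsilon}g(y)
=\int q(-z)\,g(y+\epsilon z)\,dz
=g(y)+\tfrac{c\epsilon^{2}}{2}\triangle g(y)+O(\epsilon^{3}),
\end{equation*}
obtained by the substitution $z=(x-y)/\epsilon$, a third-order Taylor expansion of $g$, and the moment conditions on $q$ (unit mass, zero mean, covariance $cI$). The first-order term vanishes by the mean-zero assumption, the second-order quadratic form $\int q(-z)\,z^{\top}H_g(y)z\,dz$ collapses to $c\,\triangle g(y)$ because the covariance is $cI$, and $O(\epsilon^3)$ follows from the $C^3$ regularity. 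An identical computation (with $z=(y-x)/\epsilon$ and using invariance of the second moment under $z\mapsto -z$) gives the same expansion for $\mathcal{D}_{M,\epsilon}^{*}$, $\mathcal{D}_{T(M),\epsilon}$, and $\mathcal{D}_{T(M),\epsilon}^{*}$.

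Next I would compose, tracking only terms up to order $\epsilon^{2}$ and using boundedness of $\P$, $\P^*$. Successively:
$\mathcal{D}_{M,\epsilon}f = f+\tfrac{c\epsilon^{2}}{2}\triangle f+O(\epsilon^{3})$;
applying $\P$ gives $\P f+\tfrac{c\epsilon^{2}}{2}\P\triangle f+O(\epsilon^{3})$;
applying $\mathcal{D}_{T(M),\epsilon}$ adds a second $\tfrac{c\epsilon^{2}}{2}\triangle\P f$;
applying $\mathcal{D}_{T(M),\epsilon}^{*}$ adds a third $\tfrac{c\epsilon^{2}}{2}\triangle\P f$;
applying $\P^{*}$ uses $\P^{*}\P=I$ to convert $\triangle\P f$ terms into $\P^{*}\triangle\P f$ and produces no new second-order term on $f$ itself; finally $\mathcal{D}_{M,\epsilon}^{*}$ adds a second $\tfrac{c\epsilon^{2}}{2}\triangle f$. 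Collecting,
\begin{equation*}
\mathcal{L}_\epsilon^*\mathcal{L}_\epsilon f(x) = f(x)+c\epsilon^{2}\bigl(\triangle+\P^{*}\triangle\P\bigr)f(x)+O(\epsilon^{3}),
\end{equation*}
so dividing by $\epsilon^{2}$ and letting $\epsilon\to 0$ gives the claim.

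The main technical nuisance is propagating regularity and remainder estimates across the composition. For the leading $\epsilon^{0}$ term I need the full $C^{3}$-based $O(\epsilon^{3})$ expansion, which requires $\P f\in C^{3}$ on $T(M)$ (guaranteed by $T\in C^{3}$ volume-preserving). For the $\epsilon^{2}$ cross-terms $\tfrac{c\epsilon^{2}}{2}\triangle f$ and $\tfrac{c\epsilon^{2}}{2}\P\triangle f$, the inputs are only $C^{1}$, but here I only need the smoothing to be the identity to leading order with an $O(\epsilon)$ error, which when multiplied by $\epsilon^{2}$ is absorbed into $O(\epsilon^{3})$. The remaining subtlety is verifying that, at the fixed interior point $x$, for all sufficiently small $\epsilon$ the compositions $\mathcal{D}_{T(M),\epsilon}^{*}\mathcal{D}_{T(M),\epsilon}\P\mathcal{D}_{M,\epsilon}f$ are evaluated on points uniformly bounded away from $\partial T(M)$ and $\partial M$, so that the pointwise Taylor expansion applies unchanged at each stage — this follows from continuity of $T,T^{-1}$ and compactness of $\mathrm{supp}(q)$.
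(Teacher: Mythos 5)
Your proposal is correct and takes essentially the same route as the paper: a Taylor-plus-moment-conditions expansion of each smoother $\mathcal{D}_{\cdot,\epsilon}g = g + \tfrac{c\epsilon^2}{2}\triangle g + O(\epsilon^3)$ (the paper's Lemma \ref{nddifflemma}), followed by composition through $\mathcal{D}_{M,\epsilon}^*\P^*\mathcal{D}_{T(M),\epsilon}^*\mathcal{D}_{T(M),\epsilon}\P\mathcal{D}_{M,\epsilon}$ and collection of the order-$\epsilon^2$ terms using $\P^*\P=I$. Your explicit remark that the adjoints $\mathcal{D}^*$ (convolution with the reflected kernel $q(-\cdot)$) satisfy the same expansion because reflection preserves the zero mean and covariance $cI$ is a small detail the paper applies implicitly, and your bookkeeping of why the $\epsilon^2$ cross-terms only need lower regularity is likewise a point the paper passes over; neither changes the substance of the argument.
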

The proof of Theorem \ref{analcvgce} is in Appendix \ref{sec:analcvgce}.
The appearance of the Laplace operator is due to the fact that $\mathcal{D}_{M,\epsilon}f(x)\approx f(x)+ (c\epsilon^2/2)\triangle f(x)$ for small $\epsilon$.
The symmetry conditions on $q$ in Theorem \ref{analcvgce} (which are physically desirable as they model isotropic diffusion) cause the first order term in $\epsilon$ to vanish.
\begin{example}
\label{unifexample}
If $q(x)=\mathbf{1}_{B_1(0)}/\ell(B_1(0))$ (uniform diffusion on a unit ball), then $c=1/3, 1/4, 1/5$ in dimensions $d=1, 2, 3$, respectively.
\end{example}

Theorem \ref{analcvgce} provides a theoretical link between the diffusion-based method \cite{F13} and the diffusion-free constructions based on the Laplace operator in the present paper.
The latter have very strong connections with geometry, evidenced by Theorems \ref{fflemma} and \ref{cheegerdynthm}, and further reinforce the geometric intuition of \cite{F13}.
In numerical computations, if the dynamical system is deterministic and the dynamics and the domain are smooth, the present construction may be advantageous because the spectrum of $\hat{\triangle}$ is well-separated, while the second-largest eigenvalue of $\mathcal{L}_\epsilon^*\mathcal{L}_\epsilon$ is likely to be separated from 1 by order $\epsilon^2$.
If the dynamical system or the domain lacks smoothness, or if the dynamics has nontrivial diffusion from a model, both of which are not uncommon in many real-world applications, then approach of \cite{F13} may be more appropriate.
The nontrivial diffusion from the model will in this case produce a larger spectral gap.
\section{Numerical experiments}
\label{sect:numerics}

In this section, we propose a method for finding coherent sets with low Cheeger ratios $\mathbf{h}^D(\Gamma):=(\ell_{d-1}(\Gamma)+\ell_{d-1}(T\Gamma))/\min\{\ell_d(M_1),\ell_d(M_2)\}$, where $\Gamma$ disconnects $M$ into $M_1,M_2$.
We use the level sets of the first nontrivial eigenfunction of the dynamic Laplacian ${\hat{\triangle}}$, in analogy to the level sets of the first nontrivial eigenfunction of the Laplacian $\triangle$ in the static case described in Section \ref{staticsection}.
Our goal here is to demonstrate the efficacy of this approach, rather than to find the most accurate or efficient numerical implementation, which will be treated in a forthcoming study.

To numerically estimate the Perron-Frobenius operator $\mathcal{P}$ we use Ulam's method \cite{ulam}.
For simplicity, we describe here the case of $T(M)=M$;  the construction for $T(M)\neq M$ is completely analogous and can be found in \cite{FSM10,FPG14}.
We partition $M$ into a grid of $n$ small boxes $\{B_1,\ldots,B_n\}$ and compute a matrix $P$ of conditional transition probabilities between boxes under the action of $T$.
Using a uniform intra-box grid of $Q$ points $z_{i,1},\ldots,z_{i,Q}\in B_i$, one computes $P_{ij}=\#\{z_{i,q}\in B_i: T(z_{i,q})\in B_j\}/\#\{z_{i,q}\in B_i\}$.
The matrix $P$ is a row-stochastic matrix, where the $(i,j)^{\rm th}$ entry estimates the conditional probability of a randomly chosen point in $B_i$ entering $B_j$ under the application of $T$.
The connection with $\mathcal{P}$ is as follows.
Denote by $\pi_n:L^1(M)\to\sp\{\mathbf{1}_{B_1},\ldots,\mathbf{1}_{B_n}\}$ the projection onto characteristic functions on grid sets.
One has $[\pi_n\mathcal{P}f]=\tilde{P}[\pi_nf]$, where $\tilde{P}$ is the transpose of $P$ and $[f]$ denotes the vector formed from the $n$ values taken by an $f\in \sp\{\mathbf{1}_{B_1},\ldots,\mathbf{1}_{B_n}\}$.

The Laplace operator $\triangle$ in our two-dimensional examples is approximated using finite-difference on a five-point stencil, calculated at the centre points of the grid boxes $\{B_1,\ldots,B_n\}$.
We treat the cases where $M$ has boundary rather crudely, simply applying zero Neumann boundary conditions via a symmetric reflection in the finite-difference scheme, without directly enforcing (\ref{strongbc}).
For example, given an $N\times N'$ grid covering a rectangle, denote $f_{i,j}$ to be the value of $f$ at grid position $(x_i,y_j)$.
At the right-hand boundary $f_{N,j}$, we replace the fictional extension $f_{N+1,j}$ in the usual five-point stencil $f_{N+1,j}+f_{N-1,j}+f_{N,j+1}+f_{N,j-1}-4f_{N,j}$ with a symmetric extension $f_{N+1,j}=f_{N-1,j}$ to obtain $2f_{N-1,j}+f_{N,j+1}+f_{N,j-1}-4f_{N,j}$.
The resulting matrix is denoted $\triangle_n$.

We note that the matrices $\tilde{P}$ and $\triangle_n$ are sparse and consequently $\hat{\triangle}_n=\triangle_n+\tilde{P}^\top\triangle_n\tilde{P}$ is also sparse.
The boxes $\{B_1,\ldots,B_n\}$ and matrix $P$ were constructed in Matlab using the GAIO software \cite{DFJ01}.
The level sets of the eigenfunctions of the approximation of $\hat{\triangle}_n$ are extracted automatically using Matlab's \verb"contour" function, with the default settings.

The algorithm we use in the following two-dimensional case studies is described below.
\begin{algorithm}\
\label{mainalg}
\begin{enumerate}
\item Form the matrix $\tilde{P}$ and the discrete Laplacian $\triangle_n$ as described above, and combine to create $\hat{\triangle}_n$.
\item Calculate eigenvalues $\lambda_1>\lambda_2>\cdots$ and eigenvectors $u_1, u_2,\ldots$ of $\hat{\triangle}_n$.
\item Iteratively scan over values of $u_2$ from $\min_i u_{2,i}$ to $\max_i u_{2,i}$. For each value, extract a level curve $\Gamma$ in $M$ using Matlab's \verb"contour" function (this function returns a collection of points representing corners of a polygonal curve).   To compute $T\Gamma$,
\begin{enumerate}
\item Either: map the points representing $\Gamma$ directly with $T$,
\item Or: compute $\tilde{P}u_2$ and extract $T\Gamma$ using Matlab's \verb"contour" function with the same level set value as for $\Gamma$.
\end{enumerate}
\item Optimise $\mathbf{h}^D(\Gamma)$ by running over
all curves $\Gamma$ formed from level sets of $u_2$ in Step 3.
The length of $\Gamma$ and $T\Gamma$ are computed as the lengths of the polygonal curves comprising them. Report the $\Gamma$ and $T\Gamma$ that yield the lowest value of $\mathbf{h}^D(\Gamma)$.
\end{enumerate}
\end{algorithm}

\subsection{Linear shear on a cylinder}

Our first example is a linear shear on a cylinder $M=[0,4)/\sim\times [0,1]$, where the $x$-coordinate is periodic.
The map $T:M\circlearrowleft$ is the horizontal shear $T(x,y)=(x+y,y)$.
We begin by exploring some naive guesses for an optimal $\Gamma$.
Choosing $\Gamma$ to be $\{(x,1/2):0\le x<4\}$ separates the cylinder into upper and lower halves, and such a $\Gamma$ is preserved by $T$;  the length of $\Gamma$ and $T(\Gamma)$ are both relatively long at 4 units each, and $\mathbf{h}^D(\Gamma)=(4+4)/(2\times 2)=2$.
On the other hand, choosing $\Gamma=\{(x,y):0\le y\le 1\}\cup \{(x+2,y):0\le y\le 1\}$ separates the cylinder into two rectangles.
In this case, the length of $\Gamma$ is 2, while the length of $T(\Gamma)$ is $2\sqrt{2}$;  $\mathbf{h}^D(\Gamma)=(2+2\sqrt(2))/(2\times 2)=(1+\sqrt{2})/2$, an improvement over our previous guess.

The numerical computations are carried out using a $256\times 64$ grid of $2^{14}$ square boxes and within each box, $Q=1600$ test points are used to estimate the entries of $P$.
The eigenvalues of $\hat{\triangle}_n$ are $-0.0271, -3.0865, -3.1368, -10.2103, -12.3406, -12.3769,\ldots$.
The first eigenvalue is not exactly zero because the constant vector is not mapped exactly to a constant vector by $\tilde{P}$ due to finite point sampling in its construction.
We use the eigenvector corresponding to $\lambda_2=-3.0865$ to estimate coherent sets.
The results are shown in Figures \ref{shearvecs} and \ref{shearcs}.
\begin{figure}[h!]
  \centering
  \hspace*{-1cm}\includegraphics[width=20cm]{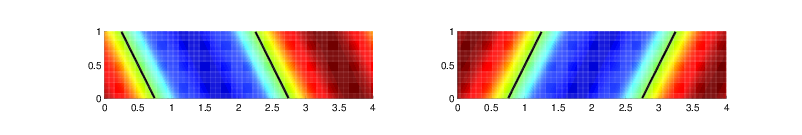}\\
  \caption{Shear map:  The second eigenvector and its image under $\mathcal{P}$ are shown, in addition to the optimised level set at $-1.0457\times 10^{-4}$.}\label{shearvecs}
\end{figure}
\begin{figure}[h!]
  \centering
  \hspace*{-1cm}\includegraphics[width=20cm]{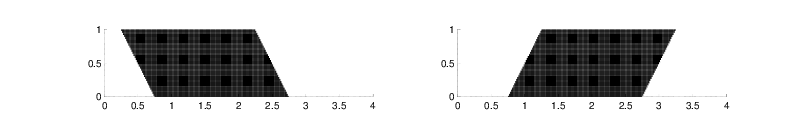}\\
  \caption{Shear map:  The extracted coherent sets at the optimised level set at $- 1.0457\times 10^{-4}$.}\label{shearcs}
\end{figure}

In this simple example, one can calculate exactly that $u_2(x,y)=\sin((x+y/2)\pi/2)$ is an eigenfunction of $\hat{\triangle}$, with eigenvalue $5\pi^2/16$ (multiplicity 2).
One may construct a one-parameter family of optimal coherent sets by sliding the sets in Figure \ref{shearcs} (left) sideways, with corresponding movement of the sets in Figure \ref{shearcs} (right).
The boundaries of the members of this family are of the form $\Gamma=\{(x-y/2,y):0\le y\le 1\}\cup \{(x-y/2+2,y):0\le y\le 1\}$ (parameterised by $x\in [0,4)$).
As the lengths of both $\Gamma$ and $T(\Gamma)$ are both $\sqrt{5}$, we can compute exactly the Cheeger constant using $\ell_d(M_1)=\ell_d(M_2)=2$ to obtain $\mathbf{h}^D(\Gamma)=(\sqrt{5}+\sqrt{5})/(2\times 2)=\sqrt{5}/2$.
Thus, the second eigenfunction of $\hat{\triangle}$ is ``balancing'' the boundary lengths between the initial and final times in order to optimise the sum of these lengths.
Note that $\sqrt{5}/2$ further improves over the Cheeger value $(1+\sqrt{2})/2$ of our second naive solution of above.

We note that the boundary condition (\ref{strongbc}) is automatically satisfied by $u_2(x,y)=\sin((x+y/2)\pi/2)$.
For example, the outward normal vector on the lower boundary of $M$ and $T(M)$ is $\mathbf{n}(x)\equiv [0, -1]^\top$, and $DT^{-1}(x,y)\equiv \begin{pmatrix}1 & -1 \\ 0 & 1 \end{pmatrix}$, so the condition (\ref{strongbc}) is that $\nabla u_2(x,0)\cdot ([0, -1]+[1, -1])^\top=0$, which is clearly satisfied.
The numerically computed eigenfunction in Figure \ref{shearvecs} also appears to satisfy this condition, even with the relatively crude numerical scheme we have employed.

In comparison with the numerics, Algorithm \ref{mainalg} produces $\ell_d(M_1)=\ell_d(M_2)=2$ (to 4 significant figures), while the value for $\ell_{d-1}(\Gamma)+\ell_{d-1}(T\Gamma)$ is around $1\%$ too low because the Matlab's contour function does not extend all the way to the cylinder boundary because of the box discretisation.
The bound for the Cheeger constant from Theorem \ref{cheegerdynthm} is 3.5137, a consistent upper bound for the exact value of $\mathbf{h}^D$.

The remaining eigenfunctions of $\hat{\triangle}$ provide good independent solutions to the dynamic boundary minimising problem.
By Theorem \ref{dynlapthm}, the eigenfunctions of $\hat{\triangle}$ corresponding to distinct eigenvalues are mutually orthogonal.
Thus if we extract coherent sets from different eigenfunctions using the level set approach, we obtain solutions that are ``independent'', in the sense that one is not a small perturbation of another.
In this example, one can exactly compute that $u(x,y)=\cos(\pi y)$ is an eigenfunction with eigenvalue $\pi^2$ (unit multiplicity), and $u(x,y)=\sin((x+y/2)\pi)$ with eigenvalue $5\pi^2/4$ (multiplicity 2).
The eigenvalues $5\pi^2/16, \pi^2,$ and $5\pi^2/4$ are the eigenvalues numbered two to six numerically computed (approximately) above.
The numerically computed eigenfunctions are shown in Figure \ref{shear3efuncs}, and it is clear that zero level sets of these eigenfunctions provide a ranking of good independent solutions of decreasing quality (increasing total boundary length).

\begin{figure}[h!]
  \centering
  \includegraphics[width=12cm]{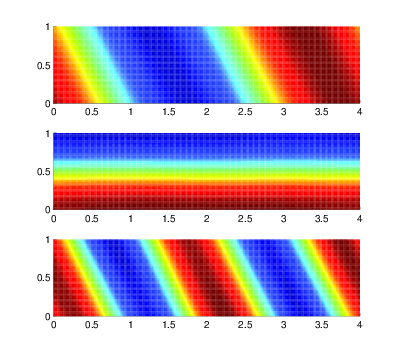}\\
  \caption{Shear map:  The second, fourth, and fifth eigenvectors of $\hat{\triangle}$ (top to bottom).}\label{shear3efuncs}
\end{figure}

\subsection{The standard map on the torus}

Our second example is nonlinear dynamics on a flat boundaryless manifold:  the so-called ``standard map''  $T:\mathbb{T}^2\circlearrowleft$ on the 2-torus is given by $T(x,y)=(x+y,y+8\sin(x+y))\pmod{2\pi}$.
We begin by testing a naive guess for the optimal $\Gamma$, namely one of the continuum of solutions to the static isoperimetric problem illustrated in Figure \ref{torusfig}): $\Gamma=\{(\{\pi/2\}\times [0,2\pi))\cup(\{3\pi/2\}\times [0,2\pi))$.
Figure \ref{fig:standardbenchmark} illustrates the action of $T$ on the partition defined by $\Gamma$;  while the length of $\Gamma$ is short, the nonlinear action of $T$ rapidly lengthens the boundary, and the length of $T(\Gamma)$ is much greater.
\begin{figure}
  \centering
  \includegraphics[width=15cm]{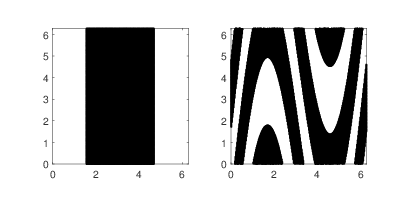}\\
  \caption{Standard map: The black set (left) arises as one of a continuum of solutions to the static isoperimetric problem (see  Figure \ref{torusfig}). Its image (right) has a much longer boundary and consequently a high $\mathbf{h}^D$ value.}\label{fig:standardbenchmark}
\end{figure}

To find the optimal $\Gamma$, numerical computations are carried out using a $128\times 128$ grid of $2^{14}$ boxes and within each box, $Q=1600$ test points are used to estimate the entries of $P$.
The eigenvalues of $\hat{\triangle}_n$ are $-0.1487,-1.6466,-1.6498,-6.0875,-6.0939,\ldots$.
The first eigenvalue is not exactly zero because the constant vector is not mapped exactly to a constant vector by $\tilde{P}$ due to finite point sampling in its construction.
We use the eigenvector corresponding to $\lambda_2=-1.6466$ to estimate coherent sets.
The results are shown in Figures \ref{standardvecs} and \ref{standardcs}.
\begin{figure}[h!]
  \centering
  \includegraphics[width=15cm]{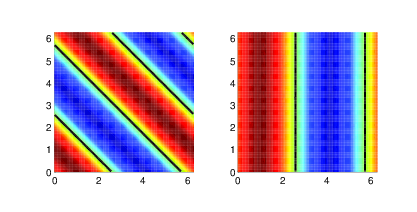}\\
  \caption{Standard map:  The second eigenvector and its image under $\mathcal{P}$ are shown, in addition to the optimised level set at $-2.4741\times 10^{-4}$.}\label{standardvecs}
\end{figure}
\begin{figure}[h!]
  \centering
  \includegraphics[width=15cm]{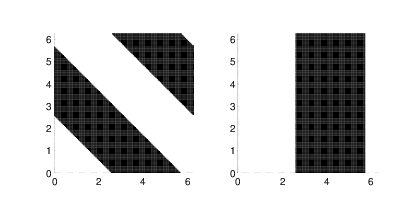}\\
  \caption{Standard map:  The extracted coherent sets at the optimised level set at $-2.4741\times 10^{-4}$.}\label{standardcs}
\end{figure}

It is clear that the fact that the standard map creates affine dynamics in certain directions is being exploited by the operator $\hat{\triangle}$ in order to find boundaries that are initially small and remain small under one iterate of $T$ (in fact, the boundary length is reduced under $T$).
One may construct a one-parameter family of optimal coherent sets by sliding the sets in Figure \ref{standardcs}(b) sideways, with corresponding movement of the sets in Figure \ref{standardcs}(a).
The second eigenvalue of $\hat{\triangle}$ is therefore probably of multiplicity 2, and this is borne out by the closeness of the computed values for $\lambda_2$ and $\lambda_3$.

In this case we can compute exactly the Cheeger constant because $\ell_d(M_1)=\ell_d(M_2)=(2\pi)^2/2$ and $\ell_{d-1}(\Gamma)=4\sqrt{2}\pi$ and $\ell_{d-1}(T\Gamma)=4\pi$.
Thus $\mathbf{h}^D=(1+\sqrt{2})/\pi\approx 0.7685$.
In comparison with the numerics, one obtains $\ell_d(M_1)=\ell_d(M_2)=(2\pi)^2/2$ (to 4 significant figures), while the value for $\ell_{d-1}(\Gamma)+\ell_{d-1}(T\Gamma)$ is around $1\%$ too low because the Matlab's contour function does not extend all the way to the torus boundary.
Bounds for the Cheeger constant from (\ref{soboleveqn}) and Theorem \ref{cheegerdynthm} are 1.2278 and 2.5664, respectively, both consistent upper bounds for the exact value of $\mathbf{h}^D$.

\subsection{Transitory flow on the square}

Our third example is a nonlinear time-dependent flow on the unit square introduced in \cite{meissmosovsky}, defined by $\dot{x}=-\partial\Psi/\partial y, \dot{y}=-\partial\Psi/\partial x,$ where $\Psi$ is the time-dependent stream function $\Psi(x,y,t)=(1-s(t))\sin(2\pi x)\sin(\pi y)+s(t)\sin(\pi x)\sin(2\pi y)$ and  $s(t)=t^2(3-2t), 0\le t\le 1$.
The flow is computed from $t=0$ to $t=1$.
At time $t=0$, the instantaneous vector field comprises two separate rotating ``gyres'' on the left and right halves of the square.
As $t$ increases from 0 to 1, the instantaneous vector field rotates 90 degrees to finally arrive at two rotating gyres in the upper and lower halves of the square.

The numerical computations are carried out using a $128\times 128$ grid of $2^{14}$ boxes and within each box, $Q=1600$ test points are used to estimate the entries of $P$.
The eigenvalues of $\hat{\triangle}_n$ are $-39.9269$, $-87.1430$, $-155.7652$, $-352.8106$, $-430.3017$, $-465.4415,\ldots$
The first eigenvalue is again not exactly zero, because the constant vector is not mapped exactly to a constant vector by $\tilde{P}$ due to finite point sampling in its construction.
We use the eigenvector corresponding to $\lambda_2=-87.1430$ to estimate coherent sets.
The results are shown in Figures \ref{meissvecs} and \ref{meisscs}.

\begin{figure}[h!]
  \centering
  \includegraphics[width=15cm]{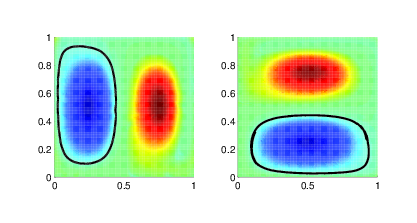}\\
  \caption{Transitory flow:  The second eigenvector and its image under $\mathcal{P}$ are shown, in addition to the optimised level set at $-6.4417\times 10^{-4}$.}\label{meissvecs}
\end{figure}
\begin{figure}[h!]
  \centering
  \includegraphics[width=15cm]{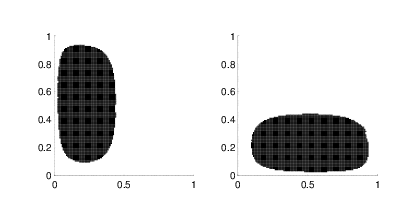}\\
  \caption{Transitory flow:  The extracted coherent sets at the optimised level set at $-6.4417\times 10^{-4}$.}\label{meisscs}
\end{figure}

From the numerics, one obtains $\ell_d(M_1)=0.3091$, $\ell_{d-1}(\Gamma)=2.1606$, $\ell_{d-1}(T\Gamma)=2.9557$ and the value for $\mathbf{h}^D(\Gamma)\approx(2.1606+2.9557)/(2\times 0.3091)=8.2749$.
Bounds for the Cheeger constant from (\ref{soboleveqn}) and Theorem \ref{cheegerdynthm} are 10.0533 and 18.6701, respectively, both consistent upper bounds.

We compare these results with a ``naive'' solution, where one selects $\Gamma'$ to be the vertical separatrix that separates the two rotating elements in the instantaneous vector field at $t=0$;  see Figure \ref{meissvi} (left).
This choice of $\Gamma'$ is one of two solutions to the static isoperimetric problem on the unit square, and corresponds to a static Cheeger value of $\mathbf{h}(\Gamma')=1/(1/2)=2.$
The image of $\Gamma'$ under $T$ is shown in Figure \ref{meissvi} (right).
\begin{figure}[h!]
  \centering
  \includegraphics[width=15cm]{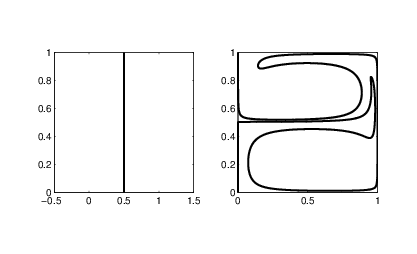}\\
  \caption{Transitory flow:  A vertical separatrix and its image from $t=0$ to $t=1$.}\label{meissvi}
\end{figure}
While the length of $\Gamma'$ is only 1, the length of $T\Gamma'$ is much greater (approximately 8.3057), leading to a Cheeger value of  $\mathbf{h}^D(\Gamma')\approx(1+8.3057)/(2\times 1/2)=9.3057$,  larger than the value of $\mathbf{h}^D(\Gamma)=8.2749$ corresponding to the solution shown in Figures \ref{meissvecs} and \ref{meisscs}.
We see that the curve $\Gamma$ in Figure \ref{meissvecs} trades off length at $t=0$ in order to have a relatively short length also at time $t=1$, in contrast to $\Gamma'$.

Finally, Figure \ref{meisszooms} shows fine detail of the curves $T(\Gamma)$;  the pixellation visible is the underlying grid, which controls the resolution of the boundary curves.
There is some shearing at the two locations shown.
This is responsible for most of the increase in $\ell_{d-1}(T(\Gamma))$ from $\ell_{d-1}(\Gamma)$.
\begin{figure}[h!]
  \centering
  \includegraphics[width=15cm]{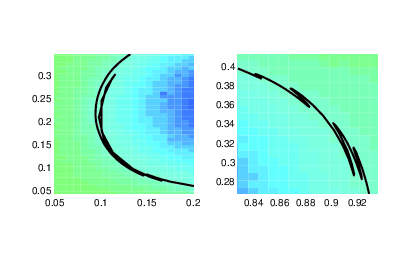}\\
  \caption{Transitory flow:  Zooms of the boundary at time $t=1$.}\label{meisszooms}
\end{figure}
While the shearing is not tiny, particularly in the left-hand figure, given the limited resolution and the fact that most of the boundary is shear-free, our selected coherent sets do perform well in terms of reducing boundary length for both the initial set and its image.
Moreover, if one considers applying diffusion at the scale of the box diameters, the ``effective boundary'' at this scale (responsible for possible diffusive ejection as discussed in \S5) is increased only a little by the tight shearing.


\section{Conclusion}
We have extended classical results from isoperimetric theory, concerned with identifying subsets of manifolds with least boundary size to volume ratios, to the situation where the manifolds are subjected to general nonlinear dynamics.
We proved a dynamic version of (i) the Federer-Fleming Theorem, which tightly links geometric and functional characterisations of the fundamental isoperimetric problem, and (ii) the Cheeger inequality, which bounds the least boundary size to volume ratio by the first nontrivial eigenvalue of the Laplace operator on the manifold.
We developed a new dynamic Laplace operator and used this operator to numerically identify subsets of manifolds that have small boundary size to volume ratios before, after, and during, the application of nonlinear dynamics.
In nonlinear fluid flow, such sets characterise finite-time coherent sets, as their boundaries do not elongate and filament, and there is little exchange between the interior and exterior of these sets in the presence of small diffusion.
We proved that the dynamic Laplace operator can also be obtained as a zero-diffusion limit of the existing probabilistic approach to identifying finite-time coherent sets \cite{F13}, thus creating a strong formal link between probabilistic descriptions and geometric descriptions of Lagrangian coherent structures.
Numerical experiments were carried out using a simple combination of Ulam's method and a finite-difference scheme.

Obvious extensions of the methodology include handling non-volume-preserving dynamics, nonuniform initial mass distributions, and manifolds of nonvanishing curvature, and work is in progress in these directions.
Accurate and efficient numerical methods are also being pursued.
An advantage of the present formulation over \cite{F13} in the pure advection setting is that there is more freedom in selecting an approximating function basis as the basis no longer needs to generate numerical diffusion, and various out-of-the-box numerical methods can be employed.
Recent work \cite{FJ15} uses radial basis functions to estimate both $\mathcal{P}$ and $\triangle$ and has resulted in a more accurate approximation of the eigenspectrum and a significant reduction of the number of required Lagrangian trajectories, compared to the numerical techniques in the present paper.
Radial basis functions are flexible enough to be able to handle irregularly-shaped domains as sometimes arise in applications.

\section{Acknowledgements}
The author acknowledges feedback from Eric Kwok and Daniel Karrasch, which improved the manuscript, assistance from Oliver Junge regarding GAIO, and a discussion with Renato Feres. This research is supported by an Australian Research Council Future Fellowship and Discovery Project DP150100017.




\appendix
\section{Proof of Theorem \ref{fflemma}}

\begin{lemma}
\label{sl}
Let $A\in GL(d)$, and $v_1,\ldots, v_d$ be an orthonormal basis for $\mathbb{R}^d$. Let $U_1=\sp\{v_1,\ldots, v_k\}, U_2=\sp\{v_{k+1},\ldots,v_{d}\}$.
Then $$\|A(v_1\wedge \cdots \wedge v_k)\|=|\det(A)|\cdot\|(A^{-1})^\top(v_{k+1}\wedge\cdots \wedge v_d)\|,$$
where $\|\cdot\|$ is the volume induced by the Gram determinant.
\end{lemma}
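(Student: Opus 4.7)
The claim is invariant under orthogonal changes of basis (orthogonal maps preserve both $\|\cdot\|$ on each exterior power and the absolute value of determinants), so the plan is to first reduce to the case where $v_1,\ldots,v_d$ is the standard basis $e_1,\ldots,e_d$ of $\mathbb{R}^d$. Write $A = [a_{ij}]$ in this basis and set $\omega = e_1\wedge\cdots\wedge e_k$, $\omega' = e_{k+1}\wedge\cdots\wedge e_d$.

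Expanding $A\omega = Ae_1\wedge\cdots\wedge Ae_k$ in the standard basis of $\wedge^k\mathbb{R}^d$ and invoking the Cauchy--Binet formula, I would obtain
\begin{equation*}
\|A\omega\|^2 \;=\; \sum_{|I|=k}\bigl(\det A[I,\{1,\ldots,k\}]\bigr)^2,
\end{equation*}
where $A[I,J]$ denotes the submatrix of $A$ with rows indexed by $I$ and columns by $J$. Analogously, with $B := (A^{-1})^\top$,
\begin{equation*}
\|B\omega'\|^2 \;=\; \sum_{|J|=d-k}\bigl(\det B[J,\{k+1,\ldots,d\}]\bigr)^2 \;=\; \sum_{|J|=d-k}\bigl(\det A^{-1}[\{k+1,\ldots,d\},J]\bigr)^2.
\end{equation*}

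The crucial step is Jacobi's complementary minor identity: for any invertible $d\times d$ matrix $M$ and subsets $I,J$ with $|I|=|J|$,
\begin{equation*}
\det M^{-1}[I,J] \;=\; \frac{(-1)^{\sigma(I)+\sigma(J)}}{\det M}\,\det M[J^{c},I^{c}],
\end{equation*}
with $\sigma$ the sum of the indices. Applied to $M = A$, $I = \{k+1,\ldots,d\}$, and an arbitrary $(d-k)$-subset $J$, this gives $(\det A^{-1}[I,J])^2 = (\det A[J^c,\{1,\ldots,k\}])^2/(\det A)^2$. Summing over $J$ (equivalently, over $J^c$ running through all $k$-subsets) collapses the right-hand side to $\|A\omega\|^2/(\det A)^2$, yielding $\|A\omega\| = |\det A|\cdot\|B\omega'\|$, as required.

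The only point of friction is keeping the sign conventions in Jacobi's formula straight when matching complementary subsets, but since the identity concerns only norms, all signs are squared away. An alternative equivalent route that avoids even this bookkeeping is to invoke the Hodge star identity $*(A\omega) = (\det A)\,(A^{-1})^\top(*\omega)$ for $\omega\in\wedge^k\mathbb{R}^d$, observe that $*\omega = \omega'$ (up to a sign determined by $k$ and $d$) for the chosen orthonormal bases, and take norms using the fact that $*$ is an isometry between $\wedge^k\mathbb{R}^d$ and $\wedge^{d-k}\mathbb{R}^d$.
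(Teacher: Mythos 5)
Your proof is correct, but it follows a genuinely different route from the paper's. The paper argues geometrically: it writes $|\det A|$ as the $d$-volume of $A(v_1\wedge\cdots\wedge v_d)$ and factors this into the $k$-volume $\|A v_1\wedge\cdots\wedge A v_k\|$ times the $(d-k)$-volume of the projection of $A v_{k+1},\ldots, A v_d$ onto $(A^{-1})^\top U_2$ (the orthogonal complement of $A U_1$), then evaluates the latter explicitly via the projection matrix $C=W(W^\top W)^{-1}W^\top$ with $W=(A^{-1})^\top V$, getting $1/\|(A^{-1})^\top(v_{k+1}\wedge\cdots\wedge v_d)\|$ directly from a determinant cancellation. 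That argument is self-contained: it needs nothing beyond the Gram-determinant definition of volume and the orthogonality of $A U_1$ and $(A^{-1})^\top U_2$. Your approach instead reduces to the standard basis and deploys two named determinantal identities --- Cauchy--Binet for $\|A\omega\|^2 = \sum_{|I|=k}(\det A[I,\{1,\ldots,k\}])^2$, and Jacobi's complementary minor theorem to pair each minor of $A^{-1}$ with the squared complementary minor of $A$ divided by $(\det A)^2$ --- after which the sum collapses. Both are valid; yours is shorter if one already has Cauchy--Binet and Jacobi in hand, and your Hodge-star remark ($*(A\omega)=(\det A)(A^{-1})^\top(*\omega)$ with $*$ an isometry) is the slickest packaging of the same fact, whereas the paper's projection argument avoids quoting any classical identity and so reads as more elementary. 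One small thing worth making explicit in your reduction step: replacing $A$ by $AQ$ (with $Q$ the orthogonal matrix sending $e_i\mapsto v_i$) simultaneously turns $(A^{-1})^\top v_j$ into $((AQ)^{-1})^\top e_j$ because $((AQ)^{-1})^\top=(A^{-1})^\top Q$, and leaves $|\det|$ unchanged --- this is what makes the reduction legitimate for \emph{both} sides of the identity at once.
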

\begin{proof}
The parallelopiped $A(v_1\wedge\cdots\wedge v_d)$ has volume $|\det(A)|$ by orthonormality of $v_1,\ldots,v_d$.
We note that the space spanned by $(A^{-1})^\top v_{k+1},\ldots, (A^{-1})^\top v_d$ is orthogonal to the space spanned by $Av_1,\ldots A v_k$;  indeed any element of one collection is orthogonal to any element of the other.
The volume of the parallelopiped can therefore be written as $\det(A)=\|Av_1\wedge\cdots\wedge A v_k\|\cdot\|\Pr_{(A^{-1})^\top(U_2)}(Av_{k+1})\wedge\cdots\wedge \Pr_{(A^{-1})^\top(U_2)}(A v_d)\|$, where $\Pr_{(A^{-1})^\top(U_2)}$ denotes orthogonal projection along $A(U_1)$ onto $(A^{-1})^\top(U_2)$.
Let $V$ be the $d\times (d-k)$ matrix with columns $v_{k+1},\ldots,v_d$, and let $W=(A^{-1})^\top V$.
The projection matrix associated with $\Pr_{(A^{-1})^\top(U_2)}$ is $C=W(W^\top W)^{-1}W^\top$.
We compute $\|\Pr_{(A^{-1})^\top(U_2)}(Av_{k+1})\wedge\cdots\wedge \Pr_{(A^{-1})^\top(U_2)}(A v_d)\|$ as $\det((CAV)^\top CAV)^{1/2}$.
\begin{eqnarray*}
\lefteqn{\det((CAV)^\top CAV)}\\
&=&\det\left(\left[V^\top A^\top (A^{-1})^\top V(V^\top A^{-1}(A^{-1})^\top V)^{-1}V^\top A^{-1}\right] \left[(A^{-1})^\top V(V^\top A^{-1}(A^{-1})^\top V)^{-1}V^\top A^{-1}AV\right]\right)\\
&=&\det\left(\left(V^\top A^{-1}(A^{-1})^\top V\right)^{-1}\left(V^\top A^{-1}(A^{-1})^\top V\right)\left(V^\top A^{-1}(A^{-1})^\top V\right)^{-1}\right)\quad\mbox{by orthogonality of $V$}\\
&=&\det(V^\top A^{-1}(A^{-1})^\top V)^{-1})\\
&=&1/\|(A^{-1})^\top (v_{k+1}\wedge\cdots\wedge v_d)\|^2
\end{eqnarray*}
Thus, $\|\Pr_{(A^{-1})^\top(U_2)}(Av_{k+1})\wedge\cdots\wedge \Pr_{(A^{-1})^\top(U_2)}(A v_d)\|=1/\|(A^{-1})^\top(v_{k+1}\wedge\cdots \wedge v_d)\|$, and the result follows.
\end{proof}

\begin{proof}[Proof of Theorem \ref{fflemma}]
The main thing to prove is the equality.
We modify the arguments of Remark VI.2.3 and the proof of Theorem II.2.1\cite{chavelisoperimetric}.

(a)
We start by showing $\mathbf{s}^D\le \mathbf{h}^D$.
We do this by creating a specific sequence of functions $f_\epsilon$, which when substituted into (\ref{soboleveqn}), in the limit achieve $\mathbf{h}^D$;  therefore $\mathbf{s}^D$ can potentially be lower still.
Suppose we have a specific disconnection $\Gamma$, and define $\Gamma_\epsilon=\{x\in M: d(x,\Gamma)<\epsilon\}$, where $d(x,\Gamma)=\inf_{y\in \Gamma}\|x-y\|$, and because of the vanishing curvature we write the Riemannian distance between two points $x,y\in M$ as $\|x-y\|$.
Define
$$f_\epsilon=\left\{
               \begin{array}{ll}
                 1, & \hbox{$x\in M_1\setminus\Gamma_\epsilon$;} \\
                -1, & \hbox{$x\in M_2\setminus\Gamma_\epsilon;$}\\
 (1/\epsilon)d(x,\Gamma),&\hbox{$x\in M_1\cap\Gamma_\epsilon$;}\\                -(1/\epsilon)d(x,\Gamma),&\hbox{$x\in M_2\cap\Gamma_\epsilon.$}
               \end{array}
             \right.
$$
The function $f_\epsilon$ is Lipschitz and by mollification on $\mathring{M}$ we can produce a sequence of  $C^\infty$ functions $\phi_{j,\epsilon}$ such that $\| f_\epsilon- \phi_{j,\epsilon}\|_1\to 0$ and $\|\nabla f_\epsilon-\nabla \phi_{j,\epsilon}\|_1\to 0$ as $j\to \infty$ (see e.g. Theorem I.3.3 \cite{chavelisoperimetric}).
Now,
\begin{eqnarray*}
\mathbf{s}^D&=&\inf_{f\in C^\infty} \frac{\|\nabla f\|_1+\|\nabla\P f\|_1}{2\inf_\alpha \|f-\alpha\|_{1}}\\
&\le&\frac{\|\nabla \phi_{j,\epsilon}\|_1+\|\nabla\P \phi_{j,\epsilon}\|_1}{2\inf_\alpha \|\phi_{j,\epsilon}-\alpha\|_{1}}\qquad\mbox{for each $j$}\\
&\le&\frac{\|\nabla f_\epsilon\|_1+\|\nabla \phi_{j,\epsilon}-\nabla f_\epsilon\|_1+\|\nabla\P f_\epsilon\|_1+\|\nabla\P \phi_{j,\epsilon}-\nabla\P f_\epsilon\|_1}{2\inf_\alpha \|f_\epsilon-\alpha\|_{1}-2\|f_\epsilon-\phi_{j,\epsilon}\|_{1}}\qquad\mbox{for each $j$}\\
\end{eqnarray*}
Thus, letting $j\to\infty$ we have for each $\epsilon>0$,
\begin{equation}
\label{sDcompare1}
\mathbf{s}^D\le \frac{\|\nabla f_\epsilon\|_1+\|\nabla\P f_\epsilon\|_1}{2\inf_\alpha \|f_\epsilon-\alpha\|_{1}}.
\end{equation}
We begin to interpret these terms in terms of $d$- and $d-1$-dimensional volume.
Note that $|\nabla f_\epsilon|$ is $1/\epsilon$ on $\Gamma_\epsilon$ and zero elsewhere.
Thus $\lim_{\epsilon\to 0}\int_M |\nabla f_\epsilon|\ d\ell=\lim_{\epsilon\to 0}\ell(\Gamma_\epsilon)/\epsilon=2\ell_{d-1}(\Gamma)$.

Now we concentrate on the term $\|\nabla \P f_\epsilon\|_1$.
Let $x\in\Gamma_\epsilon\cap M_2$, and $z\in\Gamma$ be the closest point to $x$ (if there are several, choose one).
Note $\nabla f_\epsilon(x)=\hat{n}(x)/\epsilon$ where $\hat{n}(x)=(z-x)/|z-x|$, which is normal to $\Gamma$ at $z$.
Since $T$ is volume-preserving we note that $\P f_\epsilon$ is 1 on $T( M_1\setminus\Gamma_\epsilon)$ and $-1$ on $T( M_2\setminus\Gamma_\epsilon)$.
Thus, $|\nabla (\P f_\epsilon)|=0$ on these regions.
The value of $\P f_\epsilon$ on $T\Gamma_\epsilon$ must be computed.
Let us first consider $T( M_2\cap\Gamma_\epsilon)$.
\begin{eqnarray}
\nonumber\int_{T( M_2\cap\Gamma_\epsilon)}|\nabla (f_\epsilon\circ T^{-1})(x)|\ d\ell&=&\int_{T( M_2\cap\Gamma_\epsilon)}|\nabla f_\epsilon(T^{-1}x)^\top\cdot DT^{-1}(x)|\ d\ell\\
\nonumber&=&(1/\epsilon)\int_{T( M_2\cap\Gamma_\epsilon)}|\hat{n}(T^{-1}x)^\top\cdot DT^{-1}(x)|\ d\ell\\
\nonumber&=&(1/\epsilon)\int_{ M_2\cap\Gamma_\epsilon}|\hat{n}(x)^\top\cdot DT^{-1}(Tx)|\ d\ell\\
\label{final}&=&(1/\epsilon)\int_{ M_2\cap\Gamma_\epsilon}|(DT(x)^{-1})^\top\hat{n}(x)|\ d\ell
\end{eqnarray}

Let $t_1(x),\ldots,t_{d-1}(x)$ be an orthonormal set of vectors spanning the orthogonal complement of $\hat{n}(x)$ in $\mathbb{R}^d$ (these vectors span the $d-1$-dimensional tangent space of $\Gamma$ at $z$).
By Lemma \ref{sl}, one has $|(DT(x)^{-1})^\top\hat{n}(x)|=|DT(x)(t_1(x)\wedge\cdots \wedge t_{d-1}(x))|$, where $|\cdot|$ denotes the volume (one-dimensional and $d-1$-dimensional, respectively) induced by the Gram determinant.
Thus,
$$(\ref{final})=(1/\epsilon)\int_{ M_2\cap\Gamma_\epsilon}|DT(x)(t_1(x)\wedge\cdots \wedge t_{d-1}(x))|\ d\ell.$$
The integrand measures the local increase in the $d-1$-dimensional volume of linear spaces close to the tangent spaces of $\Gamma$, under the action of $T$ in an $\epsilon$-neighbourhood of $\Gamma$, and the above integral converges to $\ell_{d-1}(T\Gamma)$ as $\epsilon\to 0$.
Similarly,
$$\lim_{\epsilon\to 0}\int_{T( M_1\cap\Gamma_\epsilon)}|\nabla (f_\epsilon\circ T^{-1})(x)|\ d\ell=\ell_{d-1}(T\Gamma).$$
Thus,
\begin{equation}
\label{sDcompare2}
\lim_{\epsilon\to 0} (\|\nabla f_\epsilon\|_1+\|\nabla \P f_\epsilon\|_1)/2=\ell_{d-1}(\Gamma)+\ell_{d-1}(T\Gamma).
\end{equation}

Now we turn to the denominator $\int_M |f_\epsilon-\alpha|\ d\ell$.
Without loss, suppose that $\ell( M_1)\le \ell( M_2)$.
\begin{eqnarray*}
\int_M |f_\epsilon-\alpha|\ d\ell&\ge& |1-\alpha|(\ell( M_1)-\ell(\Gamma_\epsilon))+|1+\alpha|(\ell( M_2)-\ell(\Gamma_\epsilon))\\
&\ge&(|1-\alpha|+|1+\alpha|)(\ell( M_1)-\ell(\Gamma_\epsilon))\\
&\ge&2(\ell( M_1)-\ell(\Gamma_\epsilon)),
\end{eqnarray*}
implying $\inf_\alpha \int_M |f_\epsilon-\alpha|\ d\ell\ge 2(\ell( M_1)-\ell(\Gamma_\epsilon))$ for each $\epsilon>0$.
Taking the limit as $\epsilon\to 0$, we combine this with (\ref{sDcompare1}) and (\ref{sDcompare2})
to conclude $\mathbf{s}^D\le \mathbf{h}^D$.

(b)
Now let $f\in C^\infty(M)$ and choose a constant $\beta$ so that, $ M_1=\{f>\beta\}$, $ M_2=\{f<\beta\}$ have equal volume.
Such a choice of $\beta$ satisfies $\|f-\beta\|_1=\inf_\alpha \|f-\alpha\|_1$ (see Remark  VI.2.2 p163 \cite{chavelisoperimetric}).
For $t>0$ define $D_t=\{x\in  M_1: f(x)>\beta+t\}$, and $\tilde{D}_t=\{x\in T(M_1): \P f(x)>\beta+t\}=
\{x\in T(M_1): f\circ T^{-1}(x)>\beta+t\}$, thus $\tilde{D}_t=TD_t$.
In what follows, we concentrate on $\tilde{D}_t$ and $\P f$, modifying the argument for $D_t$ and $f$ in \cite{chavelisoperimetric} p46.
Firstly, using the co-area formula, Corollary I.3.1 \cite{chavelisoperimetric} with $f\equiv 1$, $\Phi=f-\beta$ (and then $\Phi=\mathcal{P}f-\beta$), one has
\begin{equation}
\label{coarea1}
\int_{ M_1}|\nabla (f -\beta)|\ d\ell+\int_{T(M_1)}|\nabla (\P f -\beta)|\ d\ell=\int_0^\infty (\ell_{d-1}(\partial{D}_t)+\ell_{d-1}(\partial\tilde{D}_t))\ dt.
\end{equation}
Continuing,
\begin{eqnarray}
\label{altsobvol}
(\ref{coarea1})&\ge& 2\mathbf{h}^D \int_0^\infty \ell(D_t)\ dt\quad\mbox{since $\ell(D_t)=\ell(\tilde{D}_t)\le \ell(M)/2$}\\
\label{hfinal}&=& 2\mathbf{h}^D \int_{ M_1} |f-\beta|\ d\ell,
\end{eqnarray}
by a standard argument, see e.g.\ p.164 \cite{chavelisoperimetric}.
Similarly, $\int_{ M_2}|\nabla (f -\beta)|\ d\ell+\int_{T(M_2)}|\nabla (\P f -\beta)|\ d\ell\ge 2\mathbf{h}^D \int_{ M_2} |f-\beta|\ d\ell$.
Thus,
\begin{eqnarray}
\label{altsob1}\int_{ M}|\nabla f|\ d\ell+\int_{T(M)}|\nabla \P f |\ d\ell&=&\int_{ M}|\nabla (f -\beta)|\ d\ell+\int_{T(M)}|\nabla (\P f -\beta)|\ d\ell\\
\nonumber&\ge& 2\mathbf{h}^D \int_{ M} |f-\beta|\ d\ell\\
\nonumber&\ge& 2\mathbf{h}^D \inf_\alpha \int_{ M} |f-\alpha|\ d\ell
\end{eqnarray}
and $\mathbf{s}^D\ge \mathbf{h}^D$.


(c) 
In order to get the inequality $\mathbf{h}^D/2\le \hat{\mathbf{s}}^D$, we set $\beta=\bar{f}$ in the argument of part (b) above.
Note that now possibly only one of $M_1$ or $M_2$ has volume less than or equal to $\ell(M)/2$, and WLOG suppose it is $M_1$.
Then (\ref{hfinal}) holds.
We note that $\int_{T(M_2)}|\mathcal{P}f-\beta|\ d\ell=\int_{M_2} |f-\beta|\ d\ell\ge \int_{M_1} |f-\beta|\ d\ell=\int_{T(M_1)}|\mathcal{P}f-\beta|\ d\ell$.
Thus,
\begin{eqnarray}
\frac{\int_M |\nabla(f-\beta)|\ d\ell +\int_{T(M)}|\nabla(\mathcal{P}f-\beta)|\ d\ell}{2\int_M|f-\beta|\ d\ell}&\ge&\frac{\int_{M_1} |\nabla(f-\beta)|\ d\ell +\int_{T(M_1)}|\nabla(\mathcal{P}f-\beta)|\ d\ell}{4\int_{M_1}|f-\beta|\ d\ell}\\
&\ge&\mathbf{h}^D/2,
\end{eqnarray}
by (\ref{hfinal}).
%
%
%
%
Thus, $\mathbf{h}^D\le 2\hat{\mathbf{s}}^D$.
\end{proof}

\begin{proof}[Proof of Corollary \ref{ffcor}]
All of the calculations concerning the map $T$ and the operator $\mathcal{P}$ in the proof of Theorem \ref{ffthm} hold for each of the maps $T^{(i)}, i=1,\ldots,n$ or $T^{(t)}, t\in[0,\tau]$.
These calculations are always put together linearly, and the proof proceeds exactly as in the proof of  Theorem \ref{ffthm}.
\end{proof}

\section{Proof of Theorem \ref{cheegerdynthm}}

\begin{proof}[Proof of Theorem \ref{cheegerdynthm}]

The proof is a modification of the presentation of \cite{ledoux};  see also \cite{chavelisoperimetric} Theorem 3, Section IV.3.
Let $g:M\to \mathbb{R}$ be positive and smooth;  then $\P g$ is also positive and smooth.
First, by the co-area formula applied separately to $g$ and $\P g$ (see e.g.\ Cor.\ I.3.1 \cite{chaveleigenvalues}) and then the definition of $\mathbf{h}^D$ we have that
\begin{eqnarray}
\label{coareaeqn}\int_M |\nabla g|\ d\ell+\int_{T(M)}|\nabla(\P g)|\ d\ell&=&\int^\infty_0 \ell_{d-1}(\{g=t\})+\ell_{d-1}(\{\P g=t\})\ dt \\
&=&\int^\infty_0 \ell_{d-1}(\{g=t\})+\ell_{d-1}(T\{g=t\})\ dt \\
\label{heqn}&\ge& 2\mathbf{h}^D\int_0^\infty \min\{\ell(\{g\ge t\}),\ell(\{g<t\})\}\ dt
\end{eqnarray}

Let $f:M\to \mathbb{R}$ be smooth and denote by $m$ the median of $f$;  i.e.\ $\ell(f\ge m)\ge 1/2$ and $\ell(f\le m)\ge 1/2$.
Set $f^+=\max\{f-m,0\}, f^-=\max\{-f+m,0\}$, so that $f-m=f^+-f^-$.
Note that by volume-preservation, $m$ is also the median for $\P f$, and we similarly decompose $\P f-m=(\P f)^+-(\P f)^-$.
Further, note that since $\P$ is positive and a composition operator, we have $((\P f)^+)^2=\P ((f^+)^2)$ and similarly for $f^-$.
We apply (\ref{heqn}) to $g=(f^+)^2$ and $g=(f^-)^2$.
Note that for each $t>0$, $\ell(\{(f^+)^2\ge t\})\le 1/2$ and $\ell(\{(f^-)^2\ge t\})\le 1/2$.
Now,
\begin{eqnarray}
\nonumber\lefteqn{\frac{1}{2}\left(\int_M |\nabla((f-m)^2)|\ d\ell+\int_{T(M)}|\nabla((\P f-m)^2)|\ d\ell\right)}\\
\nonumber&=&\frac{1}{2}\left(\int_M |\nabla((f^+)^2)|+|\nabla((f^-)^2)|\ d\ell+\int_{T(M)}|\nabla((\P f^+)^2)|+|\nabla((\P f^-)^2)|\ d\ell\right)\\
\nonumber&\ge&\mathbf{h}^{D}\int^\infty_0 \ell(\{(f^+)^2\ge t\})\ dt+\mathbf{h}^{D}\int^\infty_0 \ell(\{(f^-)^2\ge t\})\ dt\\
\nonumber&=&\mathbf{h}^{D}\int_M (f^+)^2\ d\ell+\mathbf{h}^{D}\int_M (f^-)^2\ d\ell\\
\label{eqn1}&=&\mathbf{h}^{D}\int_M (f-m)^2\ d\ell
\end{eqnarray}
Further,
\begin{eqnarray}
\nonumber\frac{1}{2}\int_M|\nabla((f-m)^2)|\ d\ell&=&\int_M|(f-m)\cdot\nabla f|\ d\ell\\
\label{cauchyschwartz}&\le& \|f-m\|_2\cdot\|\nabla f\|_2,
\end{eqnarray}
where $\|\cdot\|_2$ denotes the $L^2(\ell)$ norm.
Also analogously to (\ref{cauchyschwartz}) we have
\begin{equation}
\label{cauchyschwartza}
\frac{1}{2}\int_{T(M)}|\nabla((\P f-m)^2)|\ d\ell\le \|\P f-m\|_2\cdot\|\nabla (\P f)\|_2=\|f-m\|_2\cdot\|\nabla (\P f)\|_2.
\end{equation}
Thus, using (\ref{eqn1})--(\ref{cauchyschwartza}) and Cauchy-Schwartz,
\begin{eqnarray}
\nonumber
(\mathbf{h}^{D})^2\|(f-m)\|_2^4&\le&\|f-m\|_2^2\left(\|\nabla f\|_2+\|\nabla(\P f)\|_2\right)^2\\
\label{penultimate}&\le&2\|f-m\|_2^2\left(\|\nabla f\|_2^2+\|\nabla(\P f)\|_2^2\right).
\end{eqnarray}
Thus (\ref{penultimate}) becomes
\begin{eqnarray}
\nonumber(\mathbf{h}^{D})^2&\le &2\frac{\int_M|\nabla f|^2\ d\ell+\int_{T(M)}|\nabla(\P f)|^2\ d\ell}{\int_M (f-m)^2\ d\ell}\\
\label{cheegerexpr}&\le &
2\frac{\int_M|\nabla f|^2\ d\ell+\int_{T(M)}|\nabla(\P f)|^2\ d\ell}{\int_M (f-(\int_M f\ d\ell))^2\ d\ell},
\end{eqnarray}
since $\inf_\alpha\|f-\alpha\|_2$ is realised when $\alpha$ is the mean of $f$.
As $f$ is arbitrary,
 we may minimise the RHS of (\ref{penultimate}) by inserting $f=u_2$, the eigenfunction of $\hat{\triangle}$ corresponding to the lowest nontrivial eigenvalue (with $u_2$ satisfying the boundary condition of Theorem \ref{cheegerdynthm} if $M$ has nonempty boundary).
Upon this insertion, the RHS of (\ref{penultimate}) takes the value $4\lambda_2$ by Part 3 of Theorem \ref{dynlapthm}.
\end{proof}

\begin{proof}[Proof of Corollaries \ref{cheegerncor} and \ref{cheegertcor}]
All of the calculations concerning the map $T$ and the operator $\mathcal{P}$ in the proof of Theorem \ref{cheegerdynthm} hold for each of the maps $T^{(i)}, i=1,\ldots,n$ or $T^{(t)}, t\in[0,\tau]$.
These calculations are almost always put together linearly;  the only exception is equation (\ref{penultimate}), which we describe here in the continuous time case.
\begin{eqnarray}
\nonumber
(\mathbf{h}^{D})^2\|(f-m)\|_2^4&\le&\frac{1}{\tau^2}\left(\int_0^\tau 2\|f-m\|_2\cdot\|\nabla \P^{(t)}f\|_2\ dt\right)^2\\
\label{penultimatet}&\le&\frac{4\|f-m\|_2^2}{\tau^2}\int_0^\tau \|\nabla \P^{(t)}f\|_2^2\ dt\cdot\int_0^\tau 1\ dt.
\end{eqnarray}
Thus $$(\mathbf{h}^{D})^2\le 4\frac{\frac{1}{\tau}\int_0^\tau \|\nabla \P^{(t)}f\|_2^2\ dt}{\|f-m\|^2_2}.$$
The proof proceeds exactly as in the proof of  Theorem \ref{cheegerdynthm}, using Remark \ref{spectrummultistep}.
\end{proof}

\section{Proof of Theorem \ref{dynlapthm}}

\subsection{Existence of weak solutions and variational characterisation of eigenvalues}
\label{sec:weakexist}
Let $X=W^{1,2}(M)$, the Sobolev space of functions $u:M\to\mathbb{R}$ with square-integrable weak derivative.
The space $X$ is a Hilbert space with the inner product $\langle u,v\rangle_X=\int_M \nabla u\cdot \nabla v+uv\ d\ell$.
We will establish the existence of a set of weak solutions $u\in X$ to
\begin{equation}
\label{weakeqn}
(1/2)\left(\int_M \nabla v\cdot \nabla u\ d\ell+\int_{T(M)} \nabla (\P v)\cdot \nabla (\P u)\ d\ell\right)=-\lambda \int_M v\cdot u\ d\ell\qquad\mbox{for all $v\in W^{1,2}$}.
\end{equation}
We require certain extremisation properties and therefore for $u\in X$, we define the functionals $F(u)=F_1(u)+F_2(u)$, where $F_1(u)=(1/2)\int_M |\nabla u|^2\ d\ell$, $F_2(u)=(1/2)\int_{T(M)}|\nabla(\P u)|^2\ d\ell$ and $G(u)=\int_M u^2\ d\ell-1$.
We look for $u$ which minimizes $F(u)$ subject to $G(u)=0$ ($\|u\|_2=1$).
In the following, we consider only $F_2(u)$ as the corresponding results for $F_1(u)$ follow immediately by setting $T$ to the identity map and $\P$ the identity operator.
\begin{lemma}
\label{functionallemma}\
\begin{enumerate}
\item[(i)] The functional $F_2:X\to\mathbb{R}$ is well-defined,
    \item[(ii)] The derivative $F_2'(u)$ is linear and bounded (hence $F_2'(u)\in X^*$),
        \item[(iii)] $F_2$ is differentiable, and
         \item[(iv)] $u\mapsto F'_2(u)$ is continuous as a map from $X$ to $X^*$.
             \end{enumerate}
\end{lemma}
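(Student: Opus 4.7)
The plan is to recast $F_2$ as a bounded, symmetric, positive semidefinite quadratic form on $M$, after which all four items reduce to elementary Hilbert space calculus. Since $\mathcal{P}u = u\circ T^{-1}$ and $T$ is a $C^\infty$ volume-preserving diffeomorphism between compact manifolds, the chain rule gives $\nabla(\mathcal{P}u)(y) = (DT^{-1}(y))^\top \nabla u(T^{-1}y)$. Changing variables $y = T(x)$ (unit Jacobian by volume preservation) and using $DT^{-1}(Tx) = DT(x)^{-1}$, one obtains
\begin{equation*}
F_2(u) = \tfrac{1}{2}\int_M \nabla u(x)^\top A(x)\,\nabla u(x)\ d\ell(x), \qquad A(x) := DT(x)^{-1}\bigl(DT(x)^{-1}\bigr)^\top.
\end{equation*}
Compactness of $M$ together with smoothness of $T$ ensures that $A$ is a smooth, symmetric, positive semidefinite matrix field with uniform bound $\|A\|_\infty \le C$ for some finite $C$.

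From this representation each item follows at once. For (i), Cauchy--Schwarz yields $F_2(u) \le (C/2)\|\nabla u\|_2^2 \le (C/2)\|u\|_X^2 < \infty$. For (ii), I would identify the candidate derivative $F_2'(u)[v] = \int_M \nabla v^\top A\,\nabla u\ d\ell$, which is manifestly linear in $v$ and bounded by $C\|u\|_X\|v\|_X$, giving $F_2'(u) \in X^*$. For (iii), direct expansion produces
\begin{equation*}
F_2(u+v) - F_2(u) - F_2'(u)[v] = \tfrac{1}{2}\int_M \nabla v^\top A\,\nabla v\ d\ell = O(\|v\|_X^2),
\end{equation*}
establishing Fr\'echet differentiability. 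For (iv), because $u \mapsto F_2'(u)$ is actually linear in $u$, one obtains $\|F_2'(u)-F_2'(w)\|_{X^*} \le C\|u-w\|_X$, so $F_2'$ is Lipschitz and in particular continuous.

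The only substantive technical point is justifying the chain rule for $u \in W^{1,2}(M)$ rather than for smooth $u$. This is standard: since $T$ is a $C^\infty$ diffeomorphism of compact manifolds, pullback by $T^{-1}$ defines a bounded linear operator $W^{1,2}(M) \to W^{1,2}(T(M))$ obeying the classical chain rule almost everywhere, which follows by density of $C^\infty(M)$ in $W^{1,2}(M)$ (the same mollification tool already invoked in the proof of Theorem \ref{fflemma}) and passing to the limit in both the $L^2$ and $L^2$-gradient norms. Once this is in place, no further subtleties arise and all four claims reduce to manipulating the smooth, bounded coefficient matrix $A$.
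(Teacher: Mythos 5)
Your proposal is correct and is essentially the same argument as the paper's: both rest on the chain rule $\nabla(\mathcal{P}u)=(DT^{-1})^\top(\nabla u)\circ T^{-1}$, change of variables under the volume-preserving $T$, and compactness of $M$ to obtain a uniform bound on the Jacobian factor. Your repackaging into the single quadratic form $F_2(u)=\tfrac12\int_M \nabla u^\top A\,\nabla u\,d\ell$ with $A=DT^{-1}(DT^{-1})^\top$ is a tidy reorganization (and your observation that $F_2'$ is \emph{linear}, hence automatically Lipschitz, slightly streamlines item (iv)), but it does not change the substance of the proof.
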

\begin{proof}\quad
\begin{itemize}
\item[(i)]
$2F_2(u)=\int_{T(M)} \|\nabla(\P u)\|^2\ d\ell=\int_{T(M)} \|\nabla(u\circ T^{-1})\|^2\ d\ell=\int_{T(M)} \|(DT^{-1})^\top\cdot(\nabla u)\circ T^{-1}\|^2\ d\ell$.
    By compactness of $M$ and the fact $T$ is $C^\infty$ diffeomorphism onto $T(M)$, one may find a $C<\infty$ such that the previous expression is bounded above by $C\int\|\nabla u\|^2\ d\ell\le C\|u\|_X^2<\infty$.
Thus, $F_2:X\to \mathbb{R}$ is well-defined.
\item[(ii)]
\begin{eqnarray*}
F'_2(u)v&=&\lim_{h\to 0}\frac{F_2(u+hv)-F_2(u)}{h}\\
&=&\lim_{h\to 0}\frac{\int_{T(M)}|\nabla(\P u)+h\nabla (\P v)|^2\ d\ell-\int_{T(M)}|\nabla (\P u)|^2\ d\ell}{2h}\\
&=&\lim_{h\to 0}\frac{\int_{T(M)}|\nabla(\P u)|^2+2h(\nabla(\P u)\cdot\nabla (\P v))+h^2|\nabla (\P v)|^2 -|\nabla (\P u)|^2\ d\ell}{2h}\\
&=&\lim_{h\to 0}\frac{\int_{T(M)} 2h(\nabla(\P u)\cdot\nabla (\P v))+h^2|\nabla (\P v)|^2\  d\ell}{2h}\\
&=&\int_{T(M)} \nabla(\P u)\cdot\nabla (\P v)\ d\ell.
\end{eqnarray*}
$F'_2(u)$ is clearly linear
in $v$ and bounded because $\int_{T(M)} \nabla(\P u)\cdot\nabla (\P v)\ d\ell\le \|\nabla(\P u)\|_2\cdot\|\nabla (\P v)\|_2\le C^{1/2}\|u\|_X\|v\|_X$, where $C$ is the constant from part (i);  thus $F_2'(u)\in X^*$.
\item[(iii)]
$F_2$ is differentiable since
\begin{eqnarray*}
|F_2(u+v)-F_2(u)-F_2'(u)v|&=&\left|(1/2)\int_{T(M)}|\nabla(\P v)|^2\ d\ell\right|\\
&\le& (C/2)\int_M |\nabla v|^2\ d\ell\\
&\le& (C/2)\|v\|_X^2\to 0\mbox{ as }\|v\|_X\to 0.
\end{eqnarray*}
\item[(iv)]
Finally, let $u,v,w\in X$, then
\begin{eqnarray*}|(F'_2(u)-F'_2(v))w|&=&\left|\int_{T(M)} (\nabla(\P u)-\nabla(\P v))\cdot\nabla(\P w)\ d\ell\right|\\
&\le& \|\nabla(\P(u-v))\|_2\cdot\|\nabla(\P w)\|_2\\
&\le& C\|\nabla(u-v)\|_2\cdot\|\nabla w\|_2\\
&\le& C\|u-v\|_X\cdot\|w\|_X.
\end{eqnarray*}
Thus $\|F'_2(u)-F'_2(v)\|_{X^*}=\sup_{\|w\|_X=1}|(F'_2(u)-F'_2(v))w|\to 0$ as $\|u-v\|_X\to 0$, and $F'_2:X\to X^*$ is continuous.
\end{itemize}
\end{proof}

\begin{lemma}
\label{lemmaattain}
$F$ attains its minimum on the constraint set $\mathcal{C}=\{u\in X: G(u)=0\}$.
\end{lemma}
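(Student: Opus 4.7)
The plan is to apply the direct method of the calculus of variations. Since $F\ge 0$ on $X$, the infimum $m:=\inf_{\mathcal{C}}F$ exists. Take a minimising sequence $(u_n)\subset \mathcal{C}$ with $F(u_n)\to m$. Because $\|u_n\|_2=1$ and $\|\nabla u_n\|_2^2\le 2F_1(u_n)\le 2F(u_n)$ is bounded, the sequence is bounded in $X=W^{1,2}(M)$. By reflexivity of $X$ I extract a subsequence (not relabelled) with $u_n\rightharpoonup u$ weakly in $X$, and by the Rellich--Kondrachov theorem $u_n\to u$ strongly in $L^2(M)$, so that $\|u\|_2=1$ and hence $u\in\mathcal{C}$.

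It then remains to verify the weak lower semicontinuity $F(u)\le \liminf_n F(u_n)$. For $F_1$ this is the standard fact that $\|\nabla\cdot\|_2$ is weakly l.s.c.\ on $X$, since it is convex and norm-continuous. For $F_2$, I would use that $T$ is a volume-preserving $C^\infty$ diffeomorphism to rewrite, after a change of variables $y=T(x)$,
\begin{equation*}
2F_2(u)=\int_{T(M)}|(DT^{-1})^\top(\nabla u)\!\circ\! T^{-1}|^2\,d\ell=\int_M \nabla u(x)^\top A(x)\nabla u(x)\,d\ell(x),
\end{equation*}
with $A(x)=DT(x)^{-1}\bigl(DT(x)^{-1}\bigr)^\top$ symmetric positive definite and $C^\infty$ on the compact set $M$. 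Thus $F_2$ is a continuous, nonnegative, convex quadratic form on $X$, hence weakly l.s.c., and the same holds for $F=F_1+F_2$. Consequently $F(u)\le \liminf_n F(u_n)=m$, and since $u\in \mathcal{C}$ we conclude $F(u)=m$, i.e.\ the minimum is attained.

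The only step that requires any care is the weak lower semicontinuity of $F_2$, because $F_2$ is not obviously a norm; this is handled by the change-of-variables identity above, which exhibits $F_2$ as a convex quadratic form on $\nabla u$ with a bounded positive-definite coefficient matrix $A(x)$. The ellipticity of $A$ (with bounds uniform on $M$ because $M$ is compact and $T$ is a smooth diffeomorphism) also justifies the implicit use of $\mathcal{P}:W^{1,2}(M)\to W^{1,2}(T(M))$ as a bounded operator, already established in Lemma \ref{functionallemma}(i). Everything else is a standard invocation of reflexivity and compact embedding.
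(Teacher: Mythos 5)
Your proof is correct, but it follows a genuinely different path from the paper's. You use the textbook direct method: bound the minimising sequence in $X$, extract a weakly convergent subsequence by reflexivity, identify the $L^2$-limit via Rellich--Kondrachov (so the constraint $\|u\|_2=1$ passes to the limit), and close the argument by weak lower semicontinuity of $F=F_1+F_2$, which you justify by exhibiting $F_2$ (after the volume-preserving change of variables $y=T(x)$) as a convex quadratic form $\int_M \nabla u^\top A\,\nabla u\,d\ell$ with $A=DT^{-1}(DT^{-1})^\top$ smooth and uniformly positive definite. The paper instead upgrades the minimising sequence to a \emph{strongly} convergent one in $W^{1,2}$: it first bounds $\|\nabla(u_{j_k}-u_{j_l})\|_2$ by $\|\nabla(\mathcal{P}u_{j_k}-\mathcal{P}u_{j_l})\|_2$ using a uniform lower bound on $|DT^{-1}|$, then runs a parallelogram-law computation of the form
\begin{equation*}
(1+c^2)\|\nabla u_{j_k}-\nabla u_{j_l}\|^2 \le 2\bigl(\|\nabla u_{j_k}\|^2+\|\nabla(\mathcal{P}u_{j_k})\|^2\bigr)+2\bigl(\|\nabla u_{j_l}\|^2+\|\nabla(\mathcal{P}u_{j_l})\|^2\bigr)-2I\|u_{j_k}+u_{j_l}\|^2,
\end{equation*}
whose right side tends to $4I+4I-8I=0$, showing the $u_{j_k}$ are Cauchy in $W^{1,2}$. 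Your weak-l.s.c.\ route is shorter and invokes a standard general principle (convex $+$ strongly continuous $\Rightarrow$ weakly l.s.c.); the paper's parallelogram trick avoids that abstract lemma and yields the stronger conclusion of $W^{1,2}$-convergence of the minimising subsequence, though that extra strength is not used downstream. Both are complete proofs; the ellipticity/uniform bounds on $A$ (equivalently, on $(DT^{-1})^\top$) underlie both, appearing in your argument as coercivity of the quadratic form and in the paper's as the constant $c$ in the Cauchy estimate.
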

\begin{proof}
Let $I=\inf_{u\in X}\{F(u):G(u)=0\}$. Select a sequence $u_j\in\mathcal{C}$ such that $F(u_j)\to I$ and $F(u_j)\le I+1$ for all $j\ge 0$.
By the Poincar\'{e} inequality (e.g.\ p163 \cite{mcowen}), the norm $\|\cdot\|_X$ is equivalent to $\|\nabla(\cdot)\|_2$, so using the form of $F$, the $u_j$ are uniformly bounded in norm in $X$.
By standard arguments using Rellich compactness (e.g.\ Thm.\ 8.4.2 \cite{jost}), one can find a subsequence $u_{j_k}\in X$ and a function $\bar{u}\in X$ such that $u_{j_k}\to \bar{u}$ in $L^2(M)$.
For the weak derivatives, we develop a Cauchy sequence.
We first demonstrate that there is a $c>0$ such that $\|\nabla ( u_{j_k}-u_{j_l})\|\le (1/c)\|\nabla(\mathcal{P}u_{j_k}-\mathcal{P}u_{j_l})\|$.
\begin{eqnarray*}
\|\nabla(\mathcal{P}u_{j_k}-\mathcal{P}u_{j_l})\|^2&=&\int_{T(M)}|\nabla(u_{j_k}\circ T^{-1})-\nabla(u_{j_k}\circ T^{-1})|^2\ d\ell\\
&=&\int_{T(M)}|\nabla(u_{j_k}-u_{j_k})\circ T^{-1}\cdot DT^{-1}(x)|^2\ d\ell\\
&=&\int_{M}|\nabla(u_{j_k}-u_{j_k})\cdot DT^{-1}(Tx)|^2\ d\ell\\
&\ge& c^2\int_M |\nabla(u_{j_k}-u_{j_k})|^2\ d\ell,
\end{eqnarray*}
where $c=\inf_{x\in M, v\in\mathbb{R}^d} |DT^{-1}(Tx)\cdot v|/|v|>0$ as $T$ is a $C^\infty$ diffeomorphism and $M$ is compact.
Now,
\begin{eqnarray}
\nonumber\lefteqn{(1+c^2)\|\nabla u_{j_k}-\nabla u_{j_l}\|^2}\\
\nonumber&\le& \|\nabla u_{j_k}-\nabla u_{j_l}\|^2+\|\nabla (\P u_{j_k})-\nabla (\P u_{j_l})\|^2\\
\nonumber&=&2\left(\|\nabla u_{j_k}\|^2+\|\nabla (\P u_{j_k})\|^2\right)+2\left(\|\nabla u_{j_l}\|^2+\|\nabla (\P u_{j_l})\|^2\right)-\left(\|\nabla(u_{j_k}+u_{j_l})\|^2+\|\nabla(\P(u_{j_k}+u_{j_l}))\|^2\right)\\
\label{expansion}
&\le&2\left(\|\nabla u_{j_k}\|^2+\|\nabla (\P u_{j_k})\|^2\right)+2\left(\|\nabla u_{j_l}\|^2+\|\nabla (\P u_{j_l})\|^2\right)-2I\|(u_{j_k}+u_{j_l})\|^2
\end{eqnarray}
By construction, as $j_k,j_l\to\infty$, the first two terms of (\ref{expansion}) both converge to $4I$, and the final term of (\ref{expansion}) converges to $8I$.
Thus, the $u_{j_k}$ form a Cauchy sequence in $W^{1,2}$, and converge to $\bar{u}$ in $W^{1,2}$.
%
\end{proof}


Because $F$ and $G$ are both\footnote{Showing $G$ is a $C^1$ functional follows identically to the arguments above for $F$.} $C^1$ functionals on $X$, we can use the method of Lagrange multipliers, and by Lemma \ref{lemmaattain} the minimiser $\bar{u}$ satisfies the Euler-Lagrange equation $F'(\bar{u})v=\mu G'(\bar{u})v$ for some $\mu\in\mathbb{R}$ and all $v\in X$.
By the constructions in the proof of Lemma \ref{functionallemma} (ii), this equation is
\begin{equation}
\label{weakeigeneqn}
\int_M (\nabla \bar{u}\cdot\nabla v)\ d\ell+\int_{T(M)}(\nabla(\P \bar{u})\cdot \nabla(\P v))\ d\ell=-2\mu \int_M \bar{u}v\ d\ell\quad\mbox{for all $v\in X$}.
\end{equation}
If we set $\lambda=\mu$, we have exactly the statement (\ref{weakeqn}).
In fact, putting  $v=\bar{u}$ we get $I=F(\bar{u})=(1/2)\int_M |\nabla \bar{u}|^2 +|\nabla(\P \bar{u})|^2\ d\ell=-\lambda\int_M \bar{u}^2\ d\ell=-\lambda$.
Thus we could have defined $\lambda$ by the Rayleigh quotient
$$-\lambda=\inf_{u\in X}\left(\frac{\int_M |\nabla u|^2\ d\ell +\int_{T(M)}|\nabla(\P u)|^2\ d\ell}{2\int_M u^2\ d\ell}\right).$$
From now on we denote $(\lambda,\bar{u})$ by $(\lambda_1,u_1)$ and search for other solution pairs.
Note that $-\lambda_1=F(u_1)\ge 0$, but that $u_1\equiv 1$ yields $F(u_1)=0$ by volume-preservation of $T$;  thus $\lambda_1=0$.
\begin{lemma}
\label{orthogonality}
If $(\lambda_1,u_1)$ and $(\lambda_2,u_2)$ are solution pairs for (\ref{weakeqn}) with $\lambda_1\neq\lambda_2$ then $\langle u_1,u_2\rangle =0$;  that is $u_1,u_2$ are orthogonal in the $L^2$ inner product.
\end{lemma}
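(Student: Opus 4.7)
The plan is to exploit the symmetry of the quadratic form appearing on the left-hand side of the weak eigenvalue equation~(\ref{weakeigeneqn}). Define the bilinear form
\[
B(u,v) := \int_M \nabla u \cdot \nabla v\, d\ell + \int_{T(M)} \nabla(\P u) \cdot \nabla(\P v)\, d\ell,
\]
which is manifestly symmetric in $u$ and $v$. The weak equation~(\ref{weakeqn}) says exactly that $(\lambda_i, u_i)$ satisfies $B(u_i, v) = -2\lambda_i \langle u_i, v\rangle$ for every $v \in W^{1,2}(M)$.

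The key step is a standard ``cross-testing'' argument. First, I would take $v = u_2$ in the equation satisfied by $u_1$ to obtain
\[
B(u_1, u_2) = -2\lambda_1 \langle u_1, u_2\rangle.
\]
Next, I would take $v = u_1$ in the equation satisfied by $u_2$ to obtain
\[
B(u_2, u_1) = -2\lambda_2 \langle u_2, u_1\rangle.
\]
Since both $u_1, u_2 \in W^{1,2}(M)$, each use of the weak formulation is legitimate; here one is implicitly using the fact, established in Lemma~\ref{functionallemma}, that the gradient terms involving $\P$ are well defined on $W^{1,2}$ via the bounded multiplication by $DT^{-1}$.

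By the symmetry $B(u_1,u_2) = B(u_2,u_1)$ and the symmetry of the $L^2$ inner product, subtracting the two identities yields
\[
(\lambda_1 - \lambda_2)\langle u_1, u_2\rangle = 0.
\]
Since $\lambda_1 \neq \lambda_2$ by hypothesis, the conclusion $\langle u_1, u_2\rangle = 0$ follows immediately. There is no real obstacle here; the only thing to double-check is that the weak formulation is indeed symmetric in its two arguments, which is transparent from the explicit form of $B$, and that test functions may be chosen from the eigenfunction class itself, which is automatic as the eigenfunctions lie in $W^{1,2}(M) = X$.
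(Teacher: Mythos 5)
Your proof is correct and is essentially the paper's own argument: substitute $u=u_1, v=u_2$ and then $u=u_2, v=u_1$ into the weak eigenvalue equation, use symmetry of the bilinear form, and subtract to obtain $(\lambda_1-\lambda_2)\langle u_1,u_2\rangle=0$. Naming the form $B$ explicitly is a cosmetic difference only.
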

\begin{proof}
Put $u=u_1, v=u_2$ in (\ref{weakeqn}), then put $u=u_2, v=u_1$ in (\ref{weakeqn}) and subtract the two equations to get $(\lambda_1-\lambda_2)\int_M u_1u_2\ d\ell=0$
\end{proof}
One may now follow the standard procedure for constructing a sequence of eigenvalues $0=\lambda_1> \lambda_2>\cdots$ (e.g.\ \cite{mcowen} pp212--213) by first defining
\begin{equation}
\label{rayleigh2}
-\lambda_2=\inf_{u\in X,\langle u,u_1\rangle=0}\frac{\int_M |\nabla u|^2\ d\ell+\int_{T(M)}|\nabla(\P u)|^2\ d\ell}{2\int_M u^2\ d\ell},
\end{equation}
and then inductively adding the constraint $\langle u,u_2\rangle=0$ in the next infimum to define $\lambda_3$ and so on.
The functions $u_1,u_2,\ldots,$ constructed in this way are scaled to form an orthonormal set in $L^2$.
\begin{lemma}
\label{infinitelemma}
The sequence $\lambda_n$ tends to $-\infty$ and for each $n$, the dimension of the solution space is finite.
\end{lemma}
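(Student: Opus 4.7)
The plan is to exploit the ellipticity of the bilinear form underlying $\hat{\triangle}$ so as to reduce the claim to the Rellich--Kondrachov compact embedding $W^{1,2}(M)\hookrightarrow L^2(M)$, then repeat the standard spectral theory for elliptic operators.

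First I would rewrite the Dirichlet-type form
\[
a(u,v):=\int_M \nabla u\cdot\nabla v\ d\ell+\int_{T(M)}\nabla(\P u)\cdot\nabla(\P v)\ d\ell
\]
in a single integral over $M$. Changing variables in the second integral via $T$ and using $\nabla(\P u)(Tx)=(DT(x))^{-\top}\nabla u(x)$, one obtains
\[
a(u,v)=\int_M \nabla u(x)^\top\bigl(I+B(x)\bigr)\nabla v(x)\ d\ell(x),\qquad B(x):=DT(x)^{-1}\bigl(DT(x)^{-1}\bigr)^\top.
\]
Since $T$ is a $C^\infty$ diffeomorphism on compact $M$, $B(x)$ is symmetric, uniformly positive definite and uniformly bounded. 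Hence there are constants $0<c_1\le c_2<\infty$ with $c_1\|\nabla u\|_2^2\le a(u,u)\le c_2\|\nabla u\|_2^2$. Combined with Poincar\'{e}'s inequality, this shows that on any subspace of $X=W^{1,2}(M)$ on which $L^2$-norms are controlled, $a(u,u)$ is equivalent to $\|u\|_X^2$.

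Next, the key observation: any weak eigenfunction $u$ of $\hat{\triangle}$ with eigenvalue $\lambda$ satisfies $a(u,u)=-2\lambda\|u\|_2^2$, so
\begin{equation}
\label{bdX}
\|u\|_X^2\le C(1-\lambda)\|u\|_2^2
\end{equation}
for a constant $C$ depending only on $M,T$. To show $\lambda_n\to-\infty$, suppose for contradiction that $-\lambda_n$ is bounded above by some $K$. Applying (\ref{bdX}) to the $L^2$-orthonormal eigenfunctions $u_n$ gives $\|u_n\|_X\le \sqrt{C(1+K)}$, so $\{u_n\}$ is bounded in $X$. By Rellich--Kondrachov there is a subsequence converging in $L^2(M)$; but $\|u_n-u_m\|_2=\sqrt{2}$ for $n\neq m$ by orthonormality, a contradiction. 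Therefore $\lambda_n\downarrow -\infty$.

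For finite-dimensionality of each eigenspace, I would argue identically: if the eigenspace corresponding to some $\lambda_n$ were infinite-dimensional, choose an $L^2$-orthonormal sequence $\{v_k\}$ inside it; by (\ref{bdX}) the $v_k$ are uniformly bounded in $X$, so Rellich compactness produces an $L^2$-convergent subsequence, contradicting $\|v_k-v_\ell\|_2=\sqrt{2}$. Hence each eigenspace is finite-dimensional. The inductive variational definition of $\lambda_{n+1}$ (after quotienting out the finite-dimensional eigenspaces already found) then produces the full decreasing sequence of distinct eigenvalues, as asserted.

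The main obstacle is really just bookkeeping: verifying that the Rayleigh-quotient definition of $\lambda_n$ is consistent with weak eigenfunctions satisfying the bound (\ref{bdX}) uniformly in $n$. Once the equivalence $a(u,u)\asymp\|\nabla u\|_2^2$ is established from compactness of $M$ and smoothness of $T$, the rest is the classical spectral argument applied to the elliptic form $a$ on $W^{1,2}(M)$.
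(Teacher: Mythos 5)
Your proposal is correct and follows essentially the same strategy as the paper: bound the $X=W^{1,2}(M)$-norm of an eigenfunction by its eigenvalue via the Rayleigh identity $a(u,u)=-2\lambda\|u\|_2^2$, then invoke Rellich compactness together with $L^2$-orthonormality ($\|u_n-u_m\|_2=\sqrt 2$) to rule out both a bounded tail of eigenvalues and an infinite-dimensional eigenspace. The one place you do more work than necessary is in establishing $c_1\|\nabla u\|_2^2 \le a(u,u)$ via the change of variables and positive definiteness of $B(x)=DT^{-1}(DT^{-1})^\top$: the paper gets the needed lower bound for free by simply dropping the manifestly nonnegative second term $\int_{T(M)}|\nabla(\P u)|^2\,d\ell$, so $\|\nabla u\|_2^2\le a(u,u)=-2\lambda$ directly, no ellipticity of the combined form required. (The full two-sided ellipticity \emph{is} needed elsewhere in the paper, namely for regularity of weak solutions, but not for this lemma.) Your direct Rellich argument for finite-dimensionality of each eigenspace is also a small, harmless variant of the paper's observation that $\lambda_n\to-\infty$ forces each eigenvalue to appear only finitely often in the inductively constructed sequence.
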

\begin{proof}
Let $u_n,u_m$ be solutions to (\ref{rayleigh2}) corresponding to $\lambda_n,\lambda_m$ obtained inductively as above.
By (\ref{weakeqn}), setting $u=u_n, v=u_m$, we have
\begin{equation}
\label{infinite}
(1/2)\int_M \nabla u_n\cdot\nabla u_m\ d\ell+\int_{T(M)}\nabla(\P u_n)\cdot\nabla(\P u_m)\ d\ell=-\lambda_n\int u_nu_m\ d\ell.
\end{equation}
By Lemma \ref{orthogonality} we see that the RHS of (\ref{infinite}) is 0 if $n\neq m$ and $\lambda_n$ if $n=m$.
Thus, $$\|u_n\|_X^2=\int_M|\nabla u_n|^2 + u_n^2\ d\ell\le 2\left((1/2)\int_M|\nabla u_n|^2 \ d\ell+ \int_{T(M)}|\nabla (\P u_n)|^2\ d\ell\right)+1=-2\lambda_n+1.$$
By a standard argument, (e.g.\ \cite{mcowen} p213), we assume that $\lambda_n\nrightarrow-\infty$, thus $\|u_n\|_X$ is uniformly bounded in $n$ and by Rellich compactness one finds a Cauchy subsequence in $L^2$ and derives a contradiction by $L^2$ pairwise orthogonality of members of this subsequence.

Because $\lambda_n\to-\infty$, each $\lambda_n$ occurs only finitely many times and therefore the solution space for each $\lambda_n$ is finite-dimensional.
\end{proof}

\subsection{Ellipticity and strong solutions}
\label{sec:ellipticity}
We have established the existence of a set of solutions $(u_1,\lambda_1), (u_2,\lambda_2),\ldots,$ of (\ref{weakeqn})
and now wish to make a link between the solutions of (\ref{weakeqn}) and solutions of the strong formulation (\ref{strongeqn}).
The property of ellipticity of $\hat{\triangle}$ will be crucial.
Suppose we have a second order differential operator
\begin{equation}
\label{2ndorderop}
L=\sum_{i,j=1}^d a_{ij}(x)\frac{\partial^2}{\partial x_i\partial x_j}+\sum_{i=1}^d b_i(x)\frac{\partial}{\partial x_i}+c(x),\quad x\in \mathring{M},
\end{equation}
with coefficient functions $a_{ij}, b_i, c$ that are $C^\infty$ on $M$.
We will say that $a_{ij}$ satisfies uniform ellipticity if
\begin{equation}
\label{ellipticity}
\sum_{i,j=1}^d a_{ij}(x)\xi_i\xi_j\ge \gamma|\xi|^2,\qquad\mbox{for all $x\in M$, $\xi\in \mathbb{R}^d$}.
\end{equation}
\begin{lemma}
\label{ellipticlemma}
$\hat{\triangle}$ satisfies uniform ellipticity.
\end{lemma}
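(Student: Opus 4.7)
The plan is to identify the principal (second-order) symbol of $\hat{\triangle}$ explicitly in local coordinates and verify that the resulting coefficient matrix is uniformly positive definite on $M$. Since $\triangle$ contributes the identity matrix to the principal symbol, all the work is in understanding the second-order coefficients of $\mathcal{P}^*\triangle\mathcal{P}$.

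First I would unpack $\mathcal{P}^*\triangle\mathcal{P}f(x) = \triangle(f\circ T^{-1})(T(x))$ by the chain rule. Writing $g=f\circ T^{-1}$ and $y=T(x)$, one has
\begin{equation*}
\frac{\partial g}{\partial y_i}(y) = \sum_k \frac{\partial f}{\partial x_k}(x)\,[DT(x)^{-1}]_{ki},
\end{equation*}
and differentiating once more, the only second-order-in-$f$ contribution to $\sum_i \partial_{y_i}^2 g$ comes from applying $\partial_{y_i}$ to $\partial_k f$. Collecting terms, the coefficient of $\partial^2 f/\partial x_k\partial x_l$ in $\mathcal{P}^*\triangle\mathcal{P}$ is
\begin{equation*}
A_{kl}(x) = \sum_i [DT(x)^{-1}]_{ki}[DT(x)^{-1}]_{li} = \bigl[DT(x)^{-1}(DT(x)^{-1})^\top\bigr]_{kl},
\end{equation*}
with all remaining terms being of order at most one in $f$. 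Hence the matrix of principal coefficients of $\hat{\triangle}$ is $a(x) = \tfrac{1}{2}\bigl(I + DT(x)^{-1}(DT(x)^{-1})^\top\bigr)$.

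The ellipticity bound is then immediate: for any $\xi\in\mathbb{R}^d$,
\begin{equation*}
\sum_{k,l} a_{kl}(x)\xi_k\xi_l \;=\; \tfrac{1}{2}|\xi|^2 + \tfrac{1}{2}\bigl|(DT(x)^{-1})^\top\xi\bigr|^2 \;\ge\; \tfrac{1}{2}|\xi|^2,
\end{equation*}
so \eqref{ellipticity} holds with $\gamma = 1/2$, uniformly in $x\in M$ and $\xi$. The smoothness requirement on the coefficients $a_{kl},b_k,c$ follows from $T\in C^\infty$ together with compactness of $M$, which also guarantees that the lower-order terms (involving $\partial^2(T^{-1})_k/\partial y_i^2$ and products with $DT^{-1}$) are bounded $C^\infty$ functions on $M$.

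There is essentially no obstacle here beyond bookkeeping; the main point to state cleanly is the chain-rule computation identifying the principal symbol as $I + DT^{-1}(DT^{-1})^\top$ (pulled back via $T$), after which positive semi-definiteness of $DT^{-1}(DT^{-1})^\top$ yields $\gamma = 1/2$ for free. A coordinate-free alternative, as noted in Remark \ref{proofremark}, is to observe that $\mathcal{P}^*\triangle\mathcal{P}$ is the Laplace--Beltrami operator for the pullback metric $T^*\delta$, whose principal symbol is automatically the (positive definite) dual metric tensor; adding $\triangle$ only enlarges this, so uniform ellipticity is preserved.
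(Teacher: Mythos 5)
Your proof is correct and follows the same basic chain-rule strategy as the paper — identify the principal coefficient matrix of $\mathcal{P}^*\triangle\mathcal{P}$ by differentiating $f\circ T^{-1}$ twice — but the way you close the argument is genuinely simpler and buys a cleaner conclusion. The paper proves ellipticity of $\mathcal{P}^*\triangle\mathcal{P}$ in its own right: it recognizes $a_{kl}(x)=\sum_j \partial_j(T^{-1})_k\,\partial_j(T^{-1})_l$ as a Gram matrix built from the rows of $D(T^{-1})$, argues positive definiteness pointwise via linear independence (Theorems 7.2.1 and 7.2.10 of Horn--Johnson), and then invokes compactness of $M$ to promote the pointwise $\gamma(x)>0$ to a uniform $\gamma>0$. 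You instead observe that because $\hat{\triangle}$ already contains a copy of $\triangle$, the principal coefficient is $\tfrac12\bigl(I + DT^{-1}(DT^{-1})^\top\bigr)$, and the summand $DT^{-1}(DT^{-1})^\top$ need only be positive \emph{semi}definite (which is automatic for any matrix of the form $BB^\top$) to force $\gamma=1/2$ for free — no eigenvalue estimate, no compactness needed for the bound. This yields an explicit, $T$-independent ellipticity constant and sidesteps the only step in the paper's argument where compactness of $M$ is essential for ellipticity (compactness is still used, as you note, to ensure the lower-order coefficients are bounded $C^\infty$). One small remark: the paper's proof opens with $L=\hat{\triangle}=\triangle+\mathcal{P}^*\triangle\mathcal{P}$ without the $1/2$ from the definition in (\ref{hattriangle}); this is immaterial for ellipticity since scaling by a positive constant preserves (\ref{ellipticity}), but you were right to carry the factor through. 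Your coordinate-free remark via the pullback metric is also a valid alternative and matches the spirit of Remark \ref{proofremark}.
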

\begin{proof}
We have $L=\hat{\triangle}=\triangle+\P^*\triangle\P$.
Clearly $\triangle$ is elliptic, with $a_{ij}(x)=\delta_{ij}$ for all $x\in M$.
We show that $\P^*\triangle\P$ is elliptic and the result follows.
In fact, since $\P^*$ is merely composition with $T$, we only need show that $\triangle\P$ is elliptic.

Let $x_1,\ldots,x_d$ denote the standard coordinate system on $M$ in which $\triangle f=\sum_{i=1}^d \frac{\partial^2 f}{\partial x_i^2}$.
Let $f:M\to\mathbb{R}$ be $C^2$;  denoting $T^{-1}(x_1,\ldots,x_d)=(T^{-1}_1(x_1,\ldots,x_d),\ldots,T^{-1}_d(x_1,\ldots,x_d))$, and applying the chain rule for partial differentiation, one has
\begin{eqnarray*}
\lefteqn{
\frac{\partial^2}{\partial x_ix_j}\left(f\circ T^{-1}\right)}\\
&=&\left[\frac{\partial T^{-1}_1}{\partial x_j},\ldots,\frac{\partial T^{-1}_d}{\partial x_j}\right]\cdot H(f)\circ T^{-1}\cdot\left[\frac{\partial T^{-1}_1}{\partial x_i},\ldots,\frac{\partial T^{-1}_d}{\partial x_i}\right]^\top+(\nabla f)\circ T^{-1}\cdot\left[\frac{\partial^2 T^{-1}_1}{\partial x_ix_j},\ldots,\frac{\partial^2 T^{-1}_d}{\partial x_ix_j}\right]^\top,
\end{eqnarray*}
where $H(f)$ is the Hessian for $f$.
As $\triangle(\P f)=\sum_{i=1}^d \frac{\partial^2}{\partial x_i^2}(f\circ T^{-1})$, the representation of $\triangle (\P f)$ in the form (\ref{2ndorderop}) is
\begin{equation}
\label{2ndorderopspecific}
\sum_{i,k,l=1}^d \frac{\partial T^{-1}_k}{\partial x_i}\frac{\partial T^{-1}_l}{\partial x_i}[H(f)]_{kl}\circ T^{-1}+\sum_{i,k=1}^d \frac{\partial^2 T^{-1}_k}{\partial x_i^2}[\nabla f]_k\circ T^{-1}.
\end{equation}
Since $T$ is a $C^\infty$ diffeomorphism, both $a_{kl}=\sum_{j=1}^d\frac{\partial T^{-1}_k}{\partial x_j}\frac{\partial T^{-1}_l}{\partial x_j}$ and $b_i=\sum_{j=1}^d\frac{\partial^2 T^{-1}_i}{\partial x_j^2}$ are $C^\infty$ and bounded as a function of $x\in T(M)$ for each $k,l=1,\ldots,d$.
In order to show that $a_{kl}$ is uniformly elliptic, we note that $a_{kl}(x)$ is the inner product of the $k^{th}$ and $l^{th}$ rows of the Jacobian matrix $D(T^{-1})(x)$.
Thus, for each $x\in T(M)$, $a_{kl}(x)$ is a Gram matrix, formed from the $d$ (linearly independent, because $T$ is a diffeomorphism) rows of $D(T^{-1})(x)$, denoted $r_1,\ldots,r_d$.
The matrix $a_{kl}(x)$ is symmetric and therefore positive definite if and only if all of its eigenvalues are positive (e.g.\ Theorem 7.2.1 \cite{hornjohnson}).
Using the structure of the Gram matrix, we know $a_{kl}(x)$ is positive semidefinite and is nonsingular if and only if $\{r_1,\ldots,r_d\}$ are linearly independent (e.g.\ Theorem 7.2.10 \cite{hornjohnson}).
Thus, $a_{kl}(x)$ is positive definite and satisfies $\sum_{k,l=1}^d a_{kl}(x)\xi_k\xi_l\ge \gamma(x)|\xi|^2$ for some $\gamma(x)>0$ for every $x\in M$.
By compactness of $M$, we can set $\gamma=\min_{x\in M}\gamma(x)$.
 \end{proof}

We show that solutions of (\ref{weakeqn}) are in $C^\infty(M)$ and satisfy (\ref{strongeqn})--(\ref{strongbc}).
By the arguments used to obtain Corollary 8.4.1 \cite{jost} or the discussion on p.214 \cite{GilbargTrudinger}, provided that our second-order differential operator $\hat{\triangle}$ is uniformly elliptic and that $T$ and $M$ are $C^\infty$, one has that a solution $u$ of (\ref{weakeqn}) is in fact $C^\infty$ on $M$.
Following the arguments in \S8.4--8.5 \cite{jost}, because $u\in C^\infty(M)$ we
can apply Green's first identity to the first term on the LHS of (\ref{weakeqn}) to obtain:
\begin{equation}
\label{green1}
\int_M \nabla v\cdot \nabla u\ d\ell=-\int_M v (\triangle u)\ d\ell+\int_{\partial M} v (\nabla u)\cdot \mathbf{n}\ d\ell_{d-1}.
\end{equation}
Now, the second term on the LHS of (\ref{weakeqn}):  denoting $\tilde{\mathbf{n}}(y)$ to be the outward unit normal at $y\in T(M)$, and using  Green's first identity and change of variables under $T$:
\begin{eqnarray*}
\int_{T(M)} \nabla (\P v)\cdot \nabla (\P u)\ d\ell&=&-\int_{T(M)} \P v\cdot \triangle \P u\ d\ell+\int_{\partial T(M)} \P v \left[\nabla (\P u)\right]\cdot \tilde{\mathbf{n}}\ d\ell_{d-1}\\
&=&-\int_M v\cdot(\P^*\triangle\P)u\ d\ell+\int_{\partial T(M)} \P v \left[\nabla (\P u)\right]\cdot \tilde{\mathbf{n}}\ d\ell_{d-1}.
\end{eqnarray*}
We now manipulate the second term above using the chain rule for differentiation, change of variables under $T$, and volume-preservation of $T$: 
\begin{eqnarray}
\nonumber
\int_{\partial T(M)} \P v \left[\nabla (\P u)\right]\cdot \tilde{\mathbf{n}}\ d\ell_{d-1}&=&\int_{\partial T(M)} v\circ T^{-1} \left[\left((\nabla u)\circ T^{-1}\right)^\top\cdot DT^{-1}\right]\cdot \tilde{\mathbf{n}}\ d\ell_{d-1}\\
\label{halfwayeqn}&=&\int_{\partial M} v \left[\left(\nabla u\right)^\top \cdot (DT^{-1}\circ T)\right]\cdot (\tilde{\mathbf{n}}\circ T)|\det DT_{|\mathcal{T}_x(\partial M)}|\ d\ell_{d-1}.
\end{eqnarray}
Note that $\tilde{\mathbf{n}}(Tx)=(DT(x)^{-1})^\top \mathbf{n}(x)/\|(DT(x)^{-1})^\top \mathbf{n}(x)\|$.
By Lemma \ref{sl}, $|\det DT_{|\mathcal{T}_x(\partial M)}|=\|(DT(x)^{-1})^\top \mathbf{n}(x)\|$.
Thus,
$$
(\ref{halfwayeqn})=\int_{\partial M} v \left[\left(\nabla u\right)^\top \cdot(DT)^{-1} \right]\cdot \left((DT)^{-1}\right)^\top\cdot\mathbf{n}\ d\ell_{d-1},
$$
and we arrive at the transformed version of (\ref{weakeqn}):
\begin{eqnarray}
\label{weakeqnsmooth1}
\lefteqn{(1/2)\int_M v\cdot(\triangle+\P^*\triangle\P)u\ d\ell=\lambda \int_M v\cdot u\ d\ell}\\
\label{weakeqnsmooth2}&&\quad+(1/2)\int_{\partial M} v (\nabla u)\cdot \mathbf{n}\ d\ell_{d-1}+(1/2)\int_{\partial M} v \left[\left(\nabla u\right)^\top \cdot(DT)^{-1} \right]\cdot \left((DT)^{-1}\right)^\top\cdot\mathbf{n}\ d\ell_{d-1}, \forall v\in W^{1,2}.
\end{eqnarray}
By considering all $v\in W^{1,2}_0$ (the closure of $C^\infty_0(\mathring{M})\cap W^{1,2}(M)$ with respect to $\|\cdot\|_{W^{1,2}(M)}$) in (\ref{weakeqnsmooth1})--(\ref{weakeqnsmooth2}) we see that $(1/2)(\triangle+\P^*\triangle\P)u=\lambda u$ on $\mathring{M}$ (let $f=(1/2)(\triangle+\P^*\triangle\P)u-\lambda u$ and WLOG suppose $f(x)>0$ at some $x\in\mathring{M}$. Necessarily, $f(x)>0$ for $x$ in some open $O\subset \mathring{M}$ and consider a bump function $v$ positive in a ball contained in $O$ and zero outside $O$ to derive a contradiction).
Now (\ref{weakeqnsmooth2}) implies that
$$\int_{\partial M} v (\nabla u)\cdot \mathbf{n}\ d\ell_{d-1}+\int_{\partial M} v \left[\nabla u \cdot (DT)^{-1}\cdot \left((DT)^{-1}\right)^\top\right]\cdot\mathbf{n}\ d\ell_{d-1}=0\quad \forall v\in W^{1,2}.$$
Again using the fact that $u\in C^\infty(M)$, by an argument on $\partial M$ analogous to the parenthetical argument above, it follows that (\ref{strongbc}) holds.

\subsection{Proof of Theorem \ref{objthm}}

A key component to the proof of Theorem \ref{objthm} is the fact that the Laplacian commutes with isometries.
\begin{lemma}
\label{isometrylemma}
\begin{equation}
\label{isometryeqn}
\triangle(f\circ \Phi_{t_0})=(\triangle f)\circ\Phi_{t_0}.
\end{equation}
\end{lemma}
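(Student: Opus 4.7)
The plan is to prove this by a direct coordinate computation using the chain rule, exploiting the fact that $\Phi_{t_0}$ is a rigid Euclidean motion, i.e.\ $\Phi_{t_0}(x)=Q(t_0)x+b(t_0)$ with $Q(t_0)$ proper orthogonal and $b(t_0)$ constant in $x$. Since the manifold $M$ has vanishing curvature we may work in Euclidean coordinates and use the standard expression $\triangle f=\sum_{j=1}^d \partial^2 f/\partial x_j^2$, so the claim reduces to the classical fact that the Euclidean Laplacian commutes with rigid motions.

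First I would write $y=\Phi_{t_0}(x)$, note that $\partial y_i/\partial x_j=Q_{ij}$ and $\partial^2 y_i/\partial x_j\partial x_k=0$, and apply the chain rule twice:
\begin{equation*}
\frac{\partial(f\circ\Phi_{t_0})}{\partial x_j}(x)=\sum_{i=1}^d \frac{\partial f}{\partial y_i}(\Phi_{t_0}(x))\,Q_{ij},
\end{equation*}
\begin{equation*}
\frac{\partial^2(f\circ\Phi_{t_0})}{\partial x_j^2}(x)=\sum_{i,k=1}^d \frac{\partial^2 f}{\partial y_i\partial y_k}(\Phi_{t_0}(x))\,Q_{ij}Q_{kj}.
\end{equation*}
Summing over $j$ and invoking orthogonality $\sum_j Q_{ij}Q_{kj}=(QQ^\top)_{ik}=\delta_{ik}$ collapses the double sum to $\sum_i (\partial^2 f/\partial y_i^2)(\Phi_{t_0}(x))=(\triangle f)\circ \Phi_{t_0}(x)$, yielding (\ref{isometryeqn}).

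There is essentially no obstacle here: the translation part of $\Phi_{t_0}$ contributes nothing because it is $x$-independent, and the rotation part cancels precisely because $Q(t_0)$ is orthogonal. The only thing worth flagging is that the statement relies on $M$ being flat (vanishing curvature) so that the Laplace-Beltrami operator coincides with the coordinate Laplacian on both $M$ and $\Phi_{t_0}(M)$; on a general Riemannian manifold, the analogous identity would be $\triangle_g(f\circ\Phi)=(\triangle_{\Phi_*g}f)\circ\Phi$, which reduces to (\ref{isometryeqn}) precisely because $\Phi_{t_0}$ is an isometry of the Euclidean metric. In the proof of Theorem~\ref{objthm} this lemma will then be combined with the analogous identity for $\Phi_{t_1}$ and the definition $\P_{\dot T}=\P_{\Phi_{t_1}}\circ\P\circ\P_{\Phi_{t_0}}^{-1}$ to push $\dot{\hat{\triangle}}$ through the commutative diagram.
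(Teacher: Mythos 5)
Your proof is correct and uses essentially the same approach as the paper: a direct coordinate chain-rule computation exploiting that $\Phi_{t_0}$ is affine (so the second-derivative term drops) combined with orthogonality of $Q(t_0)$. The only cosmetic difference is that the paper cites its earlier formula (\ref{2ndorderopspecific}) and phrases the final cancellation via $\Tr(Q^\top H Q)=\Tr(H)$, while you rederive the chain rule and contract indices directly using $QQ^\top=I$.
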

\begin{proof}
By (\ref{2ndorderopspecific}), and the fact that $\Phi_{t_0}$ is affine, the LHS of (\ref{isometryeqn}) is
\begin{eqnarray*}
\sum_{i,k,l=1}^d Q(t_0)_{ki}\cdot[H(f)]_{kl}\circ \Phi_{t_0} \cdot Q(t_0)_{li}
&=&\Tr(Q(t_0)^\top\cdot [H(f)]\circ \Phi_{t_0}\cdot Q(t_0))\\
&=&\Tr([H(f)]\circ \Phi_{t_0}),
\end{eqnarray*}
using orthogonality of $Q(t_0)$, to obtain exactly the RHS of (\ref{isometryeqn}).
\end{proof}

\begin{proof}[Proof of Theorem \ref{objthm}]
We first show equivalence of (\ref{origevalprob}) and (\ref{transfevalprob}).
\begin{eqnarray*}
\dot{\hat{\triangle}}&=&\triangle+\P^*_{\dot{T}}\triangle \P_{\dot{T}}\\
&=&\triangle+(\P_{\Phi_{t_1}}\circ \P\circ \P_{\Phi_{t_0}}^{-1})^*\triangle(\P_{\Phi_{t_1}}\circ \P\circ \P_{\Phi_{t_0}}^{-1})\\
&=&\triangle+\P_{\Phi_{t_0}}\P^*\triangle\P \P_{\Phi_{t_0}}^{-1},
\end{eqnarray*}
where we have used Lemma \ref{isometrylemma} and the fact that $\P_{\Phi_{t_0}}$ and $\P_{\Phi_{t_1}}$ are unitary operators.
Now, if $\hat{\triangle}f=\lambda f$,
\begin{eqnarray*}
\dot{\hat{\triangle}}\P_{\Phi_{t_0}}f&=&(\triangle \P_{\Phi_{t_0}}+\P_{\Phi_{t_0}}\P^*\triangle\P)f\\
&=&\P_{\Phi_{t_0}}(\triangle +\P^*\triangle\P)f\\
&=&\lambda\P_{\Phi_{t_0}}f,
\end{eqnarray*}
as required, where we have again used Lemma \ref{isometrylemma}.

We now demonstrate equivalence of (\ref{strongorigbc}) and (\ref{strongtransfbc}).
We calculate each of the components of (\ref{strongtransfbc}).
First, note that $\nabla (\P_{\Phi_{t_0}}f)(x)=\left[(\nabla f)\circ\Phi_{t_0}^{-1}(x)\right]^\top\cdot Q(t_0)^{-1}$.
Next, $D\dot{T}(x)=Q(t_1)\cdot DT(\Phi^{-1}_{t_0}(x))\cdot Q(t_0)^{-1}$, so $D\dot{T}(x)^{-1}=Q(t_0)\cdot DT(\Phi^{-1}_{t_0}(x))^{-1}\cdot Q(t_1)^{-1}$ and $\left(D\dot{T}(x)^{-1}\right)^\top=Q(t_1)\cdot \left(DT(\Phi^{-1}_{t_0}(x))^{-1}\right)^\top\cdot Q(t_0)^{-1}$.
Thus, for $x\in \partial(\Phi_{t_0}(M))$,
\begin{eqnarray*}
\lefteqn{(\ref{strongtransfbc})}\\
&=&(\nabla f)\circ\Phi_{t_0}^{-1}(x)\cdot Q(t_0)^{-1}\cdot[Q(t_0)\mathbf{n}(\Phi_{t_0}^{-1}x)+Q(t_0)\cdot DT(\Phi_{t_0}^{-1}x)^{-1}\cdot \left(DT(\Phi_{t_0}^{-1}x)^{-1}\right)^\top \mathbf{n}(\Phi_{t_0}^{-1}x)]\\
&=&(\nabla f)\circ\Phi_{t_0}^{-1}(x)\cdot[\mathbf{n}(\Phi_{t_0}^{-1}x)+DT(\Phi_{t_0}^{-1}x)^{-1}\cdot \left(DT(\Phi_{t_0}^{-1}x)^{-1}\right)^\top \mathbf{n}(\Phi_{t_0}^{-1}x)],
\end{eqnarray*}
which is exactly condition (\ref{strongorigbc}) evaluated at $\Phi_{t_0}^{-1}x\in \partial M$.
\end{proof}

\section{Proof of Theorem \ref{analcvgce}}
\label{sec:analcvgce}
The crux of the proof of Theorem \ref{analcvgce} is linking the diffusion operators $\mathcal{D}_{M,\epsilon}, \mathcal{D}_{T(M),\epsilon}$ with $\triangle$.
This linking is possible because of the symmetry of the smoothing kernel $q_\epsilon$.
At small scales (small $\epsilon$), $\mathcal{D}_{M,\epsilon}, \mathcal{D}_{T(M),\epsilon}$ are close to the identity operator, and because of the spatial symmetry of $q_\epsilon$, the next dominant term depends on second order derivatives.

\begin{lemma}
\label{nddifflemma}
Let $M$ be a connected, compact Riemannian manifold of vanishing curvature, and $f:M \to\mathbb{R}$ be $C^3$.
Let $q:M\to\mathbb{R}^+$ be a nonnegative density with compact support, with mean the origin, and covariance matrix $c\cdot I$, where $I$ is the $d\times d$ identity matrix.
We scale $q$ to form $q_\epsilon(x)=q(x/\epsilon)/\epsilon^d$, and define $\D f(x)=\int_M q_\epsilon(x-y)f(y)\ d\ell(y)$.
Then
\begin{equation}
\label{diffform1}\lim_{\epsilon\to 0} \frac{(\D-I)f(x)}{\epsilon^2}=(c/2)\triangle f(x),
\end{equation}
for each $x\in \mathring{M}$.
\end{lemma}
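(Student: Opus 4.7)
The plan is to prove the lemma by a direct Taylor expansion, exploiting the compact support of $q$, the flatness of $M$, and the symmetry conditions imposed on $q$ (mean zero and isotropic covariance).

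First I would reduce to an integral over $\mathbb{R}^d$. Because $x \in \mathring{M}$ and $q$ has compact support, there exists $\epsilon_0 = \epsilon_0(x) > 0$ such that for all $0 < \epsilon < \epsilon_0$ the support of $q_\epsilon(x - \cdot)$ is contained in $M$. Since $M$ has vanishing curvature we may work in Euclidean coordinates near $x$, and the change of variable $y = x - \epsilon z$ gives
\begin{equation*}
\mathcal{D}_\epsilon f(x) = \int_{\mathbb{R}^d} q(z)\, f(x - \epsilon z)\, d\ell(z),
\end{equation*}
valid for all $\epsilon \in (0,\epsilon_0)$.

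Next I would Taylor expand $f$ to second order with integral remainder. Since $f \in C^3$,
\begin{equation*}
f(x - \epsilon z) = f(x) - \epsilon\, \nabla f(x)\cdot z + \tfrac{\epsilon^2}{2}\, z^\top H(f)(x)\, z + R(x,\epsilon z),
\end{equation*}
where $|R(x,\epsilon z)| \le C \epsilon^3 |z|^3$ with $C$ depending on $\|f\|_{C^3}$ on a neighborhood of $x$ (this bound is uniform on the support of $q_\epsilon(x-\cdot)$). Integrating against $q$ and using the three normalization conditions $\int q\, d\ell = 1$, $\int z\, q(z)\, d\ell(z) = 0$, $\int z_i z_j\, q(z)\, d\ell(z) = c\, \delta_{ij}$, the zeroth-order term reproduces $f(x)$, the first-order term vanishes by the zero-mean assumption, and the second-order term collapses via
\begin{equation*}
\int_{\mathbb{R}^d} z^\top H(f)(x)\, z\ q(z)\, d\ell(z) = \sum_{i,j} H(f)(x)_{ij}\, c\, \delta_{ij} = c\, \triangle f(x).
\end{equation*}
Thus
\begin{equation*}
\mathcal{D}_\epsilon f(x) - f(x) = \tfrac{c\,\epsilon^2}{2}\, \triangle f(x) + \int_{\mathbb{R}^d} q(z)\, R(x,\epsilon z)\, d\ell(z).
\end{equation*}

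Finally I would control the remainder. The remainder integral is bounded by $C\epsilon^3 \int |z|^3 q(z)\, d\ell(z)$, which is finite because $q$ has compact support. Dividing by $\epsilon^2$ and letting $\epsilon \to 0$ yields the stated limit. The main obstacle, such as it is, is bookkeeping: verifying that the compact support of $q$ lets us (i) ignore the manifold boundary for small $\epsilon$, (ii) guarantee a finite third moment, and (iii) pass from the Riemannian integral on $M$ to a Euclidean integral on $\mathbb{R}^d$, the last step being the reason for the vanishing-curvature hypothesis.
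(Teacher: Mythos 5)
Your proof is correct and follows essentially the same route as the paper's: Taylor expand $f$ to second order around $x$, integrate against the kernel, use the mean-zero and isotropic-covariance assumptions on $q$ to kill the linear term and collapse the quadratic term to $(c/2)\triangle f(x)$, and bound the remainder by the (finite) third moment of $q$. The only cosmetic difference is that you do the change of variables $y = x - \epsilon z$ at the outset and work with Hessian notation, whereas the paper keeps the integral in the $y$ variable and uses multi-index notation with moments $m_\alpha(q_\epsilon) = \epsilon^{|\alpha|} m_\alpha(q)$; the paper also explicitly introduces $\epsilon' = \inf_{z\in\partial M}\dist(x,z)$ where you observe the existence of $\epsilon_0(x)$, but these are the same point.
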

\begin{proof}
Using Taylor, we expand $f$ in an $\epsilon$-ball about $x$
\begin{equation}
\label{taylor}
f(x+y)=\sum_{|\alpha|=0}^2\frac{D^\alpha f(x)}{|\alpha| !} y^\alpha+\sum_{|\alpha|=3} R_\alpha(x+y)y^\alpha,
\end{equation}
where $R_\alpha(x+y)$ is the remainder term.
The notation used is $\alpha=(\alpha_1,\ldots,\alpha_d)$, where $\alpha_i$ is the number of derivatives in coordinate direction $x_i$;  $|\alpha|$ denotes the sum of the elements of $\alpha$ and $\alpha !=\prod_{i=1}^d \alpha_i !$.
If $\partial M\neq \emptyset$, then put $\epsilon'=\inf_{z\in\partial M}\dist(x,z)$.
In the following we assume that $\epsilon\le \epsilon'$.
\begin{eqnarray}
\nonumber\D f(x)&=&\int_M q_\epsilon(x-y)f(y)\ d\ell(y)\\
\nonumber&=&\int_M q_\epsilon(y)f(x+y)\ d\ell(y) \\
\label{Reqn}&=&\int_M q_\epsilon(y)\left[\sum_{|\alpha|=0}^2\frac{D^\alpha f(x)}{|\alpha| !} y^\alpha+\sum_{|\alpha|=3} R_\alpha(x+y)y^\alpha\right]\ d\ell(y)
\end{eqnarray}
The terms in the first sum of order $|\alpha|=0, 1, 2$, respectively, are: $f(x)$, 0, $\sum_{|\alpha|=2} m_\alpha(q_\epsilon) D^\alpha f(x)/\alpha !$,
where $m_\alpha(q_\epsilon)=\int_M q_\epsilon(y) y^\alpha\ d\ell(y)$ denotes the tensor of $\alpha$-moments of $q_\epsilon$.
The order 1 term is zero because of the assumption on the mean (the $|\alpha|=1$ moments) of $q$.
We note that $m_\alpha(q_\epsilon)=\int_M q_\epsilon(y) y^\alpha\ d\ell(y)=\int_M q(y/\epsilon)/\epsilon^d y^\alpha\ d\ell(y)=\int_M q(z)\epsilon^{|\alpha|}z^\alpha\ d\ell(z)=\epsilon^{|\alpha|}m_\alpha(q)$.
We can further simplify the order 2 term as $\sum_{|\alpha|=2} m_\alpha(q_\epsilon) D^\alpha f(x)/\alpha !=(c/2)\epsilon^2\triangle f(x)$  using the fact that the covariance matrix of $q$ is $c\times I$.
Rearranging (\ref{Reqn}), we have
$$\D f(x)-f(x)-(c/2)\epsilon^2\triangle f(x)
=\int_M q_\epsilon(y)\sum_{|\alpha|=3}R_\alpha(x+y)y^\alpha\ d\ell(y).$$
For $|\alpha|=3$, and $y\in B_\epsilon(x)$, one has $|R_\alpha(x+y)|\le (1/\alpha !)\max_{|\alpha|=3}\max_{z\in B_\epsilon(x)}|D^\alpha f(z)|=:C(x)$, so
$$|\D f(x)-f(x)-(c/2)\epsilon^2\triangle f(x)|\le C(x)\epsilon^3 \sum_{|\alpha|=3} m_\alpha(q).$$ 

\end{proof}

\begin{proof}[Proof of Theorem \ref{analcvgce}]
By Lemma \ref{nddifflemma}, we have $\D f(x)=f(x)+\epsilon^2(c/2)\triangle f(x)+O(\epsilon^3)$, where $O(\epsilon^3)$ means the error term is of the form $\epsilon^3 \mathcal{R}(x)$.
Therefore
$\P \D f(x)=\P f(x)+\epsilon^2(c/2)\P \triangle f(x)+O(\epsilon^3)$.
Since $T$ is $C^3$ we may again apply Lemma \ref{nddifflemma} to obtain
\begin{eqnarray*}
\D\P\D f(x)&=&\D\left(\P f(x)+\epsilon^2(c/2)\P \triangle f(x)+O(\epsilon^3)\right)\\
&=&\P f(x)+\epsilon^2((c/2)\P\triangle f(x)+(c/2)\triangle\P f(x))+O(\epsilon^3)
\end{eqnarray*}
Finally,
\begin{eqnarray*}
\D^*\P^*\D^*\D\P\D f(x)&=&\P^*\P f(x)+\epsilon^2(c/2)(\P^*\P\triangle+\P^*\triangle \P+\P^*\triangle \P+\triangle\P^*\P)f(x)+O(\epsilon^3)\\
&&=f(x)+\epsilon^2 c(\triangle+\P^*\triangle \P)f(x)+O(\epsilon^3)
\end{eqnarray*}
\end{proof}

\begin{remark}
\label{4thorderremark}
It is reasonably natural for $q$ to have additional symmetry, so that $m_\alpha(q)=0$ for $|\alpha|=3$.
In this case, the error term in the above proof is $O(\epsilon^4)$.
For example, the uniform diffusion on a unit ball considered in Example \ref{unifexample} has $m_\alpha(q)=0$ for all $|\alpha|$ odd.
\end{remark}

\bibliographystyle{plain}

\end{document}